\documentclass[letterpaper,12pt]{iopart}
\usepackage{citesort}
\usepackage{url}

\sloppy

\usepackage{amsfonts} 
\usepackage{amssymb} 
\usepackage{amsmath} 
\usepackage{amsthm}
\usepackage{color}
\usepackage[pdftex]{graphicx} 
\usepackage{soul}
\usepackage{caption}
\usepackage{enumerate}

\newcommand{\Path }{}
\newcommand{\Ndata}{N}

\newcommand{\dimObs}{d}

\newcommand{\NEigen}{m}

\newcommand{\Disc}{\mathcal{D}}
\newcommand{\InfDelay}{\Psi}

\newcommand{\Range}{H}
\newcommand{\basin}{{\mathcal{B}_{\mu}}}
\newcommand{\nbd}{\mathcal{U}}

\newcommand{\pow}{r}

\newcommand{\real}{\mathbb{R}}
\newcommand{\num}{\mathbb{N}}
\newcommand{\integer}{\mathbb{Z}}
\newcommand{\cmplx}{\mathbb{C}}
\newcommand{\TorusD}[1]{\mathbb{T}^{#1}}

\newcommand{\blue}{\textcolor{black}}

\DeclareMathOperator{\Leb}{Leb}
\DeclareMathOperator{\spn}{span}
\DeclareMathOperator{\spec}{spec}
\DeclareMathOperator{\ran}{ran}
\DeclareMathOperator{\Real}{Re}
\DeclareMathOperator{\Imag}{Im}
\DeclareMathOperator{\grad}{grad}
\DeclareMathOperator{\vol}{vol}

\newcounter{enum_sav}


\newtheorem{thm}{Theorem}
\newtheorem{lem}[thm]{Lemma}
\newtheorem{prop}[thm]{Proposition}
\newtheorem{defn}[thm]{Definition}
\newtheorem{cor}[thm]{Corollary}

\newtheorem{Assumption}{Assumption}
\newtheorem*{rk*}{Remark}{\bf}{\rm}

\begin{document}
\title{Delay-coordinate maps and the spectra of Koopman operators}
\author{Suddhasattwa Das}
\address{Courant Institute of Mathematical Sciences, New York University, New York, New York, USA}
\ead{dass@cims.nyu.edu}

\author{Dimitrios Giannakis}
\address{Center for Atmosphere Ocean Science, Courant Institute of Mathematical Sciences, New York University, New York, New York, USA}
\ead{dimitris@cims.nyu.edu}

\begin{abstract} The Koopman operator induced by a dynamical system is inherently linear and provides an alternate method of studying many properties of the system, including attractor reconstruction and forecasting. Koopman eigenfunctions represent the non-mixing component of the dynamics. They factor the dynamics, which can be chaotic, into quasiperiodic rotations on tori. Here, we describe a method through which these eigenfunctions can be obtained from a kernel integral operator, which also annihilates the continuous spectrum. We show that incorporating a large number of delay coordinates in constructing the kernel of that operator results, in the limit of infinitely many delays, in the creation of a map into the discrete spectrum subspace of the Koopman operator. This enables efficient approximation of Koopman eigenfunctions from high-dimensional data in systems with pure point or mixed spectra.

\end{abstract}

\section{Introduction}

The tasks of dimension reduction and forecasting of time series are very common in physical and engineering sciences, where the time-series studied are often partial observations of a nonlinear dynamical system. A classical example of such time series is data collected from the Earth's climate system, where many of the active degrees of freedom are difficult to access via direct observations (e.g., subsurface ocean circulation). Moreover, the available observations typically mix together different physical processes operating on a wide range of spatial and temporal scales. For instance, in the climate system, the seasonal cycle and the El Ni\~no Southern Oscillation (the latter evolving on interannual timescales) both have strong associated signals in sea surface temperature \cite{WangEtAl17}. In such applications, identifying dynamically important, coherent patterns of variability from the data can enhance our scientific understanding and predictive capabilities of complex phenomena.

Ergodic theory, and in particular its operator-theoretic formulation \cite{BudisicEtAl12,EisnerEtAl15}, provides a natural framework to address these objectives. In this framework, the focus is on the action of the dynamical system on spaces of observables (functions of the state), as opposed to the dynamical flow itself. The advantage of this approach, first realized in the seminal work of Koopman \cite{Koopman31}, is that the action of a general dynamical system on spaces of observables is always linear. As a result, with appropriate regularity assumptions, the problem of identification and prediction of dynamically intrinsic coherent patterns can be formulated as an estimation problem for the spectrum of a linear evolution operator. In addition, for systems exhibiting ergodic behavior, spectral quantities such as eigenvalues and eigenfunctions can be statistically estimated from time-ordered data without prior knowledge of the state space geometry or the equations of motion. At the same time, spaces of observables are also infinite dimensional, so the issue of finite-dimensional approximation of (potentially unbounded) operators becomes relevant. 

Starting from the techniques proposed in \cite{DellnitzJunge99,MezicBanaszuk04,Mezic05}, the operator-theoretic approach to ergodic theory has stimulated the development of a broad range of techniques for data-driven modeling of dynamical systems. These methods employ either the Koopman \cite{MezicBanaszuk04,Mezic05,RowleyEtAl09,Schmid10,BudisicEtAl12,TuEtAl14,WilliamsEtAl15,GiannakisEtAl15,BruntonEtAl17,ArbabiMezic16,Giannakis17,GiannakisDas_tracers,KordaMezic17,KordaEtAl17} or the Perron-Frobenius (transfer) operators \cite{DellnitzJunge99,DellnitzEtAl00,FroylandEtAl14b,FroylandEtAl14,FroylandPadberg09}, which are duals to one another in appropriate function spaces. The goal common to these techniques is to approximate spectral quantities for the operator in question, such as eigenvalues, eigenfunctions, and spectral projections, from measured values of observables along orbits of the dynamics. To that end, a diverse range of approaches has been employed, including state space partitions \cite{DellnitzJunge99,DellnitzJunge99,DellnitzEtAl00,FroylandEtAl14b,FroylandEtAl14,FroylandPadberg09}, harmonic averaging \cite{MezicBanaszuk04,Mezic05,DasGiannakis_RKHS_2018}, iterative methods \cite{Schmid10,RowleyEtAl09}, dictionary/basis representations \cite{TuEtAl14,WilliamsEtAl15,GiannakisEtAl15,Giannakis17,GiannakisDas_tracers,KordaMezic17}, delay-coordinate embeddings \cite{BruntonEtAl17,ArbabiMezic16,GiannakisEtAl15,Giannakis17}, and spectral-moment estimation \cite{KordaEtAl17}.

Compared to observables identified by eigendecomposition techniques based on kernel integral operators that do not depend on the dynamics (e.g., covariance \cite{AubryEtAl91,HolmesEtAl96} or heat operators \cite{BelkinNiyogi03,CoifmanLafon06,BerrySauer16b}, the latter of which have been popular in manifold learning applications), eigenfunctions of evolution operators are likely to offer higher physical interpretability and predictability, as they are determined from an operator intrinsic to the dynamical system. In particular, one of the key properties of Koopman or Perron-Frobenius eigenfunctions for ergodic dynamical systems is that they evolve periodically and with a single frequency (even if the underlying dynamical system is aperiodic), and thus have high predictability. This and a number of other attractive properties motivate the identification of such eigenfunctions of data. 

Yet, for systems of sufficient complexity, Koopman and Perron-Frobenius operators have significantly more complicated spectral behavior than kernel integral operators, generally exhibiting a continuous spectral component and/or non-isolated eigenvalues, which presents challenges to the construction of data-driven approximation techniques with spectral convergence guarantees. Indeed, to our knowledge, spectral convergence results for the data-driven approximation of Koopman eigenvalues and eigenfunctions have been limited to special cases such as quasiperiodic rotations on tori \cite{Giannakis17}, or systems observed through measurement functions lying in finite-dimensional invariant subspaces \cite{ArbabiMezic16}. 

The main contribution of our work is the construction of a data-driven approximation scheme for Koopman eigenvalues and eigenfunctions that provably converges for a broad class of ergodic dynamical systems and observation maps, encompassing many of the applications encountered in the physical and engineering sciences. Our approach will be based on a combination of ideas from delay-coordinate maps of dynamical systems \cite{SauerEtAl91}, kernel integral operators for machine learning \cite{BelkinNiyogi03,CoifmanLafon06,BerrySauer16b,BerryHarlim16,VonLuxburgEtAl08}, and Galerkin approximation techniques for variational eigenvalue problems \cite{BabuskaOsborn91}. Using these tools, we will construct a compact kernel integral operator that commutes with the Koopman operator in an asymptotic limit of infinitely many delays, and employ the finite-dimensional common eigenspaces of these operators as Galerkin approximation spaces for the Koopman eigenvalue problem. We will show that orthonormal bases of these spaces can be stably and efficiently approximated from finitely many measurements taken near the attractor, and the resulting data-driven Galerkin schemes converge in the asymptotic limit of large data. 

\section{\label{secMainResults}Assumptions and statement of main results} 

A common underlying assumption \blue{in the statistical modeling of dynamical systems is ergodicity. This assumption encapsulates} the working principle that the global properties (with respect to $\mu$) of an observable $F$ can be obtained from a time series for $F$, namely, $F(x_0),\ldots,F(x_{\Ndata-1})$, where $x_0,\ldots,x_{\Ndata-1}$ is an unobserved trajectory on the state space of the dynamical system. Moreover, ergodicity implies that $L^2$ inner products between observables can be approximated by time-correlations. Also, our methods rely on integral operators, and these can be approximated as matrices under the ergodic hypothesis. We now make our assumptions more precise. 

\begin{Assumption}\label{asmptn:standing}
Let $M$ be a \blue{topological manifold}, equipped with its Borel $\sigma$-algebra. $\Phi^t:M\to M$, $ t \in \real$, is a \blue{continuous} flow on $M$ with an ergodic, Borel probability measure $\mu$ with a compact support $X$. $ F:M\to\real^\dimObs$ is a continuous measurement function through which we collect a time-ordered data set consisting of $ N $ samples $ F(x_0), F( x_1 ), \ldots, F( x_{N-1} )$, each $F(x_n)$ lying in $\dimObs$-dimensional data space. Here, $ x_n = \Phi^{n \, \Delta t}( x_0 ) $, and $ \Delta t $ is a fixed sampling interval such that the map $\Phi^{\Delta t}$ is ergodic for the invariant measure $ \mu $.
\end{Assumption}

\paragraph{The Koopman operator.} Central to all our following discussions will be the concept of the Koopman operator. Koopman operators \cite{BudisicEtAl12,EisnerEtAl15,Nadkarni} act on observables by composition with the flow map, i.e., by time shifts. The space $L^2(X,\mu)$ of square-integrable, complex-valued functions on $X$ will be our space of observables. Given an observable $f \in L^2( X, \mu)$ and time $t\in\real$, $U^t:L^2(X,\mu) \to L^2(X,\mu) $ is the operator defined as
\[(U^tf):x\mapsto f\left(\Phi^t(x)\right), \quad \text{for $\mu$-a.e.\ $x \in X$}.\]
$U^t$ is called the Koopman operator at time $ t $ associated with the flow. For measure-preserving systems, $ U^t $ is unitary, and has a well-defined spectral expansion consisting in general of both point and continuous parts lying in the unit circle \cite{Mezic05}. The problems of mode decomposition and non-parametric prediction can both be stated in terms of the Koopman operator \cite{Giannakis17}. 
We will now describe an important tool for studying Koopman operators, namely their eigenfunctions. 

\paragraph{Koopman eigenfunctions.} Every eigenfunction $z$ of $U^t$ satisfies the following equation for some $\omega\in\real$:
\begin{equation}\label{eqn:Def_koop_eigen}
U^tz=\exp(i\omega t)z.
\end{equation}
Koopman eigenfunctions are particularly useful for prediction and dimension reduction in dynamical systems. This is because, as seen in \eqref{eqn:Def_koop_eigen}, the knowledge of an eigenfunction $z$ at time $t=0$ enables accurate predictions of $z$ up to any time $t$, since $U^t$ operates on $z$ as a multiplication operator by a time-periodic, single-frequency multiplication factor. Moreover, it is possible to construct a dimension reduction map, sending the high-dimensional data $ F( x ) \in \real^d $ to the vector $ ( z_1( x ), \ldots, z_l( x ) ) \in \mathbb{ C }^l $, where $ l \ll d $, and the $ z_1, \ldots, z_l $ are Koopman eigenfunctions corresponding to rationally independent frequencies $ \omega_1, \ldots, \omega_l $ \cite{Mezic05,GiannakisEtAl15,Giannakis17}. In this representation, the $ z_j $ can be thought of as ``coordinates'' corresponding to distinct \blue{periodic} processes operating at the timescales $ 2\pi/\omega_j $. Also of interest (and in some cases easier to compute) are the projections of the observation map $ F $ onto the Koopman eigenfunctions, called Koopman modes \cite{Mezic05}. Data-driven techniques for computing Koopman eigenvalues, eigenfunctions, and modes that have been explored in the past include methods based on generalized Laplace analysis \cite{MezicBanaszuk04,Mezic05}, dynamic mode decomposition (DMD) \cite{SchmidSesterhenn08,RowleyEtAl09,Schmid10,TuEtAl14}, extended DMD (EDMD) \cite{WilliamsEtAl15,KordaMezic17}, Hankel matrix analysis \cite{TuEtAl14,ArbabiMezic16,BruntonEtAl17}, and data-driven Galerkin methods \cite{GiannakisEtAl15,Giannakis17,GiannakisDas_tracers}. The latter approach, as well as the related work in \cite{BerryEtAl15}, additionally address the problem of nonparametric prediction of observables and probability densities.

Let $\Disc$ be the closed subspace of $L^2(X,\mu)$ spanned by the eigenfunctions of $U^t$, and $\Disc^\bot$ its orthogonal complement. \blue{As is well known \cite{Halmos1956}, and will be discussed in more detail in Section~\ref{sect:theory}, the subspaces $\mathcal{D}$ and $\mathcal{D}^\perp$ represent the quasiperiodic and mixing (chaotic) components of the dynamics, respectively. Moreover, they are both invariant under $U^t$ for every time $t\in \real$, thus inducing an invariant splitting \cite{Mezic05}}
\begin{equation} \label{eqn:L2_decomp}
L^2(X,\mu)=\Disc\oplus\Disc^\bot.
\end{equation}
Systems for which $\Disc$ contains non-constant functions and $\Disc^\bot$ is non-empty are called mixed-spectrum systems. 

\paragraph{Kernel integral operators.} The method that we will describe in this paper relies heavily on kernel integral operators. A kernel is a function $k:M\times M\to\real$, measuring the similarity between pairs of points on $M$. Kernel functions can be of various designs, and are meant to capture the nonlinear geometric structures of data; see for example \cite{BelkinNiyogi03,Kernel1,CoifmanLafon06}. One advantage of using kernels is that they can be defined so as to operate directly on the data space, e.g., $ k( x, y ) = \kappa( F( x), F( y ) ) $ for some function $ \kappa : \real^d \times \real^d \to \real $ of appropriate regularity. Defined in this manner, $ k $ can be evaluated using measured quantities $ F( x )$ without explicit knowledge of the underlying state $ x $. Associated with a square-integrable kernel $ k\in L^2(X \times X, \mu \times \mu ) $ is a compact integral operator $K:L^2(X,\mu)\to L^2(X,\mu) $ such that 
\begin{equation} \label{eqn:KOp}
Kf(x) := \int_X k(x,y)f(y) \, d\mu(y).
\end{equation}
In some cases, we will make the following assumptions on kernels.
\begin{Assumption}\label{asmptn:ker}
 The kernel $ k : M \times M \to \real$ is (i) symmetric and continuous; (ii) strictly positive-valued.
\end{Assumption}

\paragraph{Overview of approach.} We will address the eigenvalue problem for $U^t$ by solving an eigenvalue problem for a kernel integral operator $P_{Q}$, which is accessible from data, and in the limit of $Q\to\infty$, commutes with $U^t$. Since commuting operators have common eigenspaces, this will allow us to compute eigenfunctions of $ U^t $ through expansions in eigenbases obtained from $ P_{Q} $. \blue{These operators have Markovian kernels $p_Q : M \times M \to \real$ (i.e., $ p_Q \geq 0 $ and $ \int_X p_Q( x, \cdot ) \, d\mu = 1 $, for $\mu $-a.e.\ $ x \in M$), whose construction begins from a family of distance-like functions $d_Q : M \times M \to \real$, defined by
\begin{equation}\label{eqn:def:dQ}
d^2_{Q}(x,y) =\frac{ 1 }{ Q } \sum_{q=0}^{Q-1} \left\lVert F(\Phi^{q\, \Delta t}(x)) - F(\Phi^{q\, \Delta t }(y)) \right\rVert^2.
\end{equation}
Here, $Q$ is a positive integer parameter, and $ \lVert \cdot \rVert $ the canonical 2-norm on $ \real^d $. Intuitively, $d_Q(x,y)$ assigns a distance-like quantity between points $x $ and $y$ equal to the root-mean square distance between $Q$ consecutive ``snapshots'' of the observable $F$, measured along dynamical trajectories starting from $ x $ and $y$. In other words, $d_Q $ corresponds to a distance between data in delay-coordinate space with $Q$ delays. Several of our results will depend on the asymptotic behavior of $d_Q$ as $ Q\to \infty$, which we will study in detail.}

\blue{Composing $d_Q$ with a continuous shape function $ h : \real \to \real $, leads to a kernel $k_Q : M \times M \to \real$, $ k_Q = h \circ d_Q $, assigning a pairwise measure of similarity between points in $M$. In this paper, we will nominally work with Gaussian shape functions, $ h(s) = e^{-s^2/\epsilon} $, parameterized by a bandwidth parameter $ \epsilon > 0 $, so that
\begin{equation}\label{eqn:def:kQ}
k_{Q}(x,y) =e^{-d^2_{Q}(x,y) /\epsilon}. 
\end{equation} 
Such kernels satisfy Assumption \ref{asmptn:ker}(i), (ii). They are popular in manifold learning applications \cite{BelkinNiyogi03,CoifmanLafon06,BerrySauer16b} due to their localizing behavior as $ \epsilon \to 0 $ and their ability to approximate heat kernels, but our results also hold for many other kernel choices; e.g., \cite{Genton01}. Having constructed $k_Q$, the kernel $p_{Q}$ associated with the integral operator $P_{Q}$ is obtained via a \emph{Markov normalization} procedure \cite{CoifmanLafon06,BerrySauer16b}, described in Section~\ref{secMarkov}. With these definitions, we are ready to state our main results.}

\begin{thm}\label{thm:A}
Under Assumption \ref{asmptn:standing}, there exists a real, self-adjoint, ergodic, compact Markov operator $P : L^2(X,\mu ) \to L^2( X, \mu ) $, which commutes with $U^t$, and is a limit of operators $ P_{1}, P_{2}, \ldots $ (also real, self-adjoint, ergodic, compact, and Markov) in the $L^2(X,\mu)$ operator-norm topology. The operators $P_{Q}$ have Markov kernels $ p_{Q} : M \times M \to \real $ satisfying the conditions in Assumption~\ref{asmptn:ker}, and determined from delay-coordinate mapped observations $F(x), F(\Phi^{\Delta t}(x)), \ldots, F( \Phi^{(Q-1)\Delta t}(x) ) $ with $ Q $ delays. Moreover, the kernel $ p : M \times M \to \real $ of $ P $ lies in $L^\infty(X\times X, \mu \times \mu )$, and $p_{Q}$ converges to $p$ in $L^p(X\times X,\mu \times \mu)$ norm with $1 \leq p < \infty $.
\end{thm}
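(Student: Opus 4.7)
The construction of the operators $P_Q$ from the distances $d_Q$ is explicit via~\eqref{eqn:def:kQ} and the Markov normalization procedure of Section~\ref{secMarkov}; the content of the theorem lies in (a) the existence of a limit $P$ in operator norm, and (b) the commutation $PU^t=U^tP$. I would organize the entire argument around the large-$Q$ limit of the pairwise squared distance~\eqref{eqn:def:dQ}. Applying the Birkhoff ergodic theorem to the product system $(X\times X,\mu\times\mu,\Phi^{\Delta t}\times\Phi^{\Delta t})$ with the bounded continuous observable $G(x,y)=\lVert F(x)-F(y)\rVert^2$ yields
\[
d_Q^2(x,y)\xrightarrow[Q\to\infty]{}d_\infty^2(x,y)\qquad(\mu\times\mu)\text{-a.e.},
\]
where $d_\infty^2$ is bounded and measurable. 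Crucially, $d_\infty^2$ is typically \emph{not} constant: the product transformation $\Phi^{\Delta t}\times\Phi^{\Delta t}$ fails to be ergodic precisely when $U^t$ has nontrivial point spectrum, so the limiting conditional expectation retains nontrivial dependence on $(x,y)$.

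A telescoping comparison then gives $\lvert d_Q^2(\Phi^{\Delta t}x,\Phi^{\Delta t}y)-d_Q^2(x,y)\rvert=O(1/Q)$ uniformly in $(x,y)$, so $d_\infty^2$ is $(\Phi^{\Delta t}\times\Phi^{\Delta t})$-invariant; continuity of $\Phi^t$ and $F$, together with dominated convergence, extend this to joint-shift invariance for every $t\in\real$. Since $X$ is compact and $F$ continuous, $d_Q^2$ is uniformly bounded by $4\sup_{x\in X}\lVert F(x)\rVert^2$, so a.e.\ convergence upgrades to $L^r(X\times X,\mu\times\mu)$ convergence for all $1\le r<\infty$. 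Composing with the Lipschitz Gaussian shape on a bounded range preserves $L^r$ convergence, so $k_Q\to k_\infty$ in $L^r$; since $k_Q$ is bounded below by a positive constant, the Markov normalization of Section~\ref{secMarkov} is Lipschitz in $L^r$ and yields $p_Q\to p$ in $L^r$, with $p\in L^\infty$.

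From here, standard integral-operator arguments close the proof. Uniform $L^\infty$ bounds plus $L^2$ convergence of kernels give Hilbert--Schmidt, hence operator-norm, convergence $P_Q\to P$, and every Hilbert--Schmidt operator is compact. Reality, symmetry, self-adjointness, and the Markov property of each $P_Q$ are built into the Coifman--Lafon-style normalization and pass to the $L^r$ limit; ergodicity (simplicity of the leading eigenvalue) of both $P_Q$ and $P$ follows from strict positivity of the kernels via the Jentzsch/Perron--Frobenius theorem. The joint-shift invariance of $d_\infty^2$ transfers through the normalization to $p(\Phi^t x,\Phi^t y)=p(x,y)$; a change of variables in~\eqref{eqn:KOp}, using $\Phi^t$-invariance of $\mu$, then delivers $PU^t=U^tP$ for every $t$.

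The main obstacle is the nontriviality of the ergodic-theoretic limit $d_\infty^2$. Because the product transformation is genuinely non-ergodic whenever $U^t$ has nontrivial point spectrum, $d_\infty^2$ is not merely a constant but a structured object whose fine geometry encodes the Koopman eigenstructure --- this is in fact what makes $P$ useful for spectral estimation. Extracting from the Birkhoff limit both the joint-shift invariance and enough regularity (uniform boundedness, a common full-measure set on which everything converges) to drive the Markov normalization and Hilbert--Schmidt convergence is where the argument must be made with care; the surrounding integral-operator and Perron--Frobenius manipulations are then fairly routine.
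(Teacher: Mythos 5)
Your proposal follows essentially the same route as the paper: pass to the Birkhoff limit of the delay distances on the product system, compose with the shape function, Markov-normalize, and close with a Hilbert--Schmidt argument. The overall architecture is sound and the telescoping argument for $\Phi^t\times\Phi^t$-invariance of $d_\infty^2$ is correct (and is essentially what the paper's Lemma on commutator rates also exploits). However, there is one genuine gap.

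You write that ``reality, symmetry, self-adjointness, and the Markov property of each $P_Q$ are built into the Coifman--Lafon-style normalization and pass to the $L^r$ limit.'' This is not what happens, and the paper's Section on Markov normalization explicitly points it out: the bistochastic normalization $p_Q(x,y)=k_Q(x,y)/(\sigma_Q(x)\rho_Q(y))$ is \emph{not} symmetric in $(x,y)$, so $P_Q$ is not self-adjoint (it is only related to the self-adjoint operator $\hat P_Q$ by a similarity transform). The kernel $p$ of the limit operator $P$ \emph{is} symmetric, but for a reason you have to argue: once one knows $K$ commutes with $U^t$, applying $K$ to the $U^t$-invariant function $1_X$ shows $\rho=K1_X$ is $U^t$-invariant, and ergodicity then forces $\rho$ to be $\mu$-a.e.\ constant. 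It is precisely this constancy that collapses the right normalization and makes $p(x,y)=k_\infty(x,y)/\rho(x)$ symmetric, hence $P$ self-adjoint. Your proof never invokes this step; self-adjointness of $P$ does not simply ``pass to the limit'' from properties of $P_Q$ because the $P_Q$ do not have that property to begin with. You should insert the commutation-then-ergodicity argument yielding constancy of $\rho$ before deducing symmetry of $p$.

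Two smaller remarks. The claim that the normalization ``is Lipschitz in $L^r$'' would need to be made precise (what one really uses is that all the normalizers $\rho_Q,\sigma_Q$ are uniformly bounded above and away from zero on $X$, so that division is stable and $L^r$ convergence of $k_Q$ carries through to $p_Q$); the paper also glosses this, so it is not a fundamental flaw, but state it carefully. Finally, note that the theorem's phrasing that each $P_Q$ is ``self-adjoint'' is itself in tension with the paper's own normalization section; the safer reading is that the $P_Q$ are Markov and similar to self-adjoint operators, while the limit $P$ is genuinely self-adjoint for the reason above.
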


\blue{The strong convergence of the compact operators $P_{Q}$ to $P$ leads to the following spectral convergence result (e.g., Section~7 in \cite{BabuskaOsborn91} and \cite{ALL2001}).}

\begin{cor}[spectral convergence] \label{corSpectral} Under the assumptions of Theorem~\ref{thm:A}, the following hold:
\begin{enumerate}[(i)]
\item For every nonzero eigenvalue $ \lambda $ of $ P $ with multiplicity $ \alpha $ and every neighborhood $ S \subset \real $ of $ \lambda $ such that $ \spec( P ) \cap S = \{ \lambda \} $, there exists $ Q_0 \in \num_0 $ such that for all $ Q > Q_0 $, $ \spec( P_{Q} ) \cap S $ contains $ \alpha $ elements converging as $ Q \to \infty $ to $ \lambda $. 
\item Let $ \Pi $ be any projector to the eigenspace $W_\lambda $ of $ P $ at eigenvalue $ \lambda $. Let also $ \Pi_Q $ be any projector to the union of the eigenspaces of $ P_{Q} $ corresponding to the eigenvalues in $ \spec( P_{Q} ) \cap S $. Then, as $ Q \to \infty $, $ \Pi_Q $ converges strongly to $ \Pi $. Moreover, the \blue{gap (distance)} between $ W_\lambda $ and $\ran \Pi_Q $, \blue{defined as in \cite{BabuskaOsborn91}}, 
converges to zero.
\end{enumerate}
\end{cor}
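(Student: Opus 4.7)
The plan is to deduce Corollary~\ref{corSpectral} from the operator-norm convergence $P_{Q} \to P$ established in Theorem~\ref{thm:A} by the standard Riesz-projector argument of analytic perturbation theory, in the form used by \cite{BabuskaOsborn91,ALL2001}. Since $P$ is compact and self-adjoint on $L^2(X,\mu)$, its nonzero spectrum is discrete and consists of eigenvalues of finite multiplicity accumulating only at $0$; in particular the hypothesis $\spec(P) \cap S = \{\lambda\}$ isolates $\lambda$. After shrinking $S$ if necessary, I would choose a small positively oriented circle $\gamma \subset \cmplx$ contained in $S$, enclosing $\lambda$ and no other point of $\spec(P)$, so that the resolvent $R(z) = (z I - P)^{-1}$ is holomorphic in a neighborhood of $\gamma$.

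First I would produce a uniform resolvent bound on $\gamma$. Since $\gamma$ is compact and disjoint from $\spec(P)$, the quantity $M := \sup_{z \in \gamma} \lVert R(z) \rVert$ is finite, and by Theorem~\ref{thm:A} I can pick $Q_0$ so that $\lVert P_{Q} - P \rVert \le 1/(2M)$ for $Q > Q_0$. The factorization
\[
z I - P_{Q} = (z I - P)\bigl(I - R(z)(P_{Q} - P)\bigr)
\]
together with a Neumann series then shows $\gamma \subset \rho(P_{Q})$ for all such $Q$, so that $\spec(P_{Q}) \cap S$ is fully enclosed by $\gamma$, and yields the uniform estimate $\lVert R_{Q}(z) - R(z) \rVert \le 2 M^2 \lVert P_{Q} - P \rVert$ for $z \in \gamma$, where $R_{Q}(z) = (z I - P_{Q})^{-1}$.

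With this in hand, the Riesz spectral projectors
\[
\Pi = \frac{1}{2\pi \ii} \oint_{\gamma} R(z) \, dz, \qquad \Pi_{Q} = \frac{1}{2\pi \ii} \oint_{\gamma} R_{Q}(z) \, dz
\]
satisfy $\lVert \Pi_{Q} - \Pi \rVert \le \mathrm{length}(\gamma) \, M^2 \lVert P_{Q} - P \rVert / \pi \to 0$. Because $P$ and $P_{Q}$ are self-adjoint, $\Pi$ is the orthogonal projector onto $W_\lambda$ and $\Pi_{Q}$ is the orthogonal projector onto the sum of the eigenspaces of $P_{Q}$ for the eigenvalues in $\spec(P_{Q}) \cap S$. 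Norm convergence of projectors forces the ranks to match: once $\lVert \Pi_{Q} - \Pi \rVert < 1$, a standard argument gives $\dim \ran \Pi_{Q} = \dim \ran \Pi = \alpha$. Since the diameter of $\gamma$ may be taken arbitrarily small, this proves (i). For (ii), norm convergence is a fortiori strong convergence, and the gap between $W_\lambda$ and $\ran \Pi_Q$, which in the Babuska--Osborn convention \cite{BabuskaOsborn91} equals $\lVert \Pi - \Pi_{Q} \rVert$, therefore tends to zero.

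No step is a real obstacle here: all the dynamical-systems content is absorbed into Theorem~\ref{thm:A}, and the rest is textbook spectral perturbation. The only points requiring a moment of care are the identification of the Riesz projectors with orthogonal projectors (so that the ``any projector'' phrasing in the statement is well-posed), and the observation that for compact self-adjoint operators the algebraic and geometric multiplicities coincide, so that the $\alpha$ eigenvalues of $P_{Q}$ enclosed by $\gamma$ really are counted as in~(i).
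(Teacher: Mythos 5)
The paper itself does not supply a proof of Corollary~\ref{corSpectral}: it invokes the result as a known consequence of the convergence established in Theorem~\ref{thm:A}, citing Section~7 of \cite{BabuskaOsborn91} and \cite{ALL2001}. Your Riesz-projector argument is precisely the standard machinery those references rest on, so you are filling in what the paper elides, and the approach is the same one the paper is implicitly appealing to. The argument is correct in its main lines: since Theorem~\ref{thm:A} gives \emph{operator-norm} convergence $P_Q \to P$ (the paper's preamble loosely says ``strong,'' but the theorem statement is norm topology, which is what your Neumann-series resolvent estimate needs), and $P$ is compact with isolated finite-multiplicity eigenvalues away from $0$, the contour-integral projectors converge in norm, and rank preservation and gap convergence follow.

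Two small caveats are worth flagging, both traceable to an internal tension in the paper rather than a flaw in your reasoning. You identify $\Pi_Q$ as the \emph{orthogonal} projector because $P_Q$ is ``self-adjoint,'' and you then equate the Babu\v{s}ka--Osborn gap with $\lVert \Pi - \Pi_Q \rVert$. Theorem~\ref{thm:A} does state that the $P_Q$ are self-adjoint, so your reading is consistent with the statement you were given; but Section~\ref{secMarkov} of the paper says explicitly that $p_Q$ is \emph{not} symmetric, so $P_Q$ is not self-adjoint and is only related to the self-adjoint $\hat P_Q$ by a similarity transformation~\eqref{eqSimilarity}. If one takes Section~\ref{secMarkov} at face value, then (a) the Riesz projector $\Pi_Q$ is a bounded idempotent but not necessarily orthogonal, which is fine --- it still converges in norm to $\Pi$; and (b) the exact identity ``gap $= \lVert \Pi - \Pi_Q\rVert$'' (Kato's Theorem~I.6.34) requires orthogonal projectors on both sides, so it should be weakened to the one-sided estimates $\delta(W_\lambda, \ran\Pi_Q) \leq \lVert (I-\Pi_Q)\Pi\rVert$ and $\delta(\ran\Pi_Q, W_\lambda) \leq \lVert (I-\Pi)\Pi_Q\rVert$, both of which still tend to zero with $\lVert \Pi_Q - \Pi\rVert$. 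In short: the convergence conclusions survive, but the clean ``gap equals projector-norm difference'' shortcut should be stated as a bound rather than an equality unless one passes to the symmetrized operator $\hat P_Q$.
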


Theorem~\ref{thm:B} below is a continuation of Theorem~\ref{thm:A}, and can be used to conclude some useful properties of the operator $P$. 
\begin{thm}\label{thm:B}
\blue{Let $\Phi^t$ be a measurable flow on a compact set $X$ supporting an invariant ergodic probability measure $\mu$}, and $T$ be a kernel integral operator with a real-valued, symmetric kernel $ \tau\in L^2(X\times X,\mu\times\mu)$ such that $T$ commutes with $U^t$ (e.g., $T=P$ from Theorem~\ref{thm:A}). Then: 
\begin{enumerate}[(i)]
\item $\tau$ lies in the tensor product subspace $\Disc\otimes\Disc$, and is invariant under the flow $\Phi^t\times\Phi^t$. 
\item $ \Disc $ and $ \Disc^\perp $ are invariant under $T $. Moreover, $ \overline{\ran T} $ is a subspace of $ \Disc $, $ \Disc^\perp$ is a subspace of $\ker T $, and both $ \ran T $ and $ \ker T $ are invariant under $ U^t $.
\setcounter{enum_sav}{\value{enumi}}
\end{enumerate}
Moreover, if $ \ran T $ contains non-constant functions:
\begin{enumerate}[(i)]
\setcounter{enumi}{\value{enum_sav}}
\item There exists a measurable map $\pi:X\to \TorusD{D}$ for some $D\in\num$, whose components consist of joint eigenfunctions of $T$ and $U^t$, such that $\pi$ factors $\Phi^t$ into a rotation on the torus by a vector $\vec{\omega}\in\real^{D}$, i.e., $\pi(\Phi^t(x))=\pi(x)+\vec{\omega}t\bmod{1}$ for $ \mu$-a.e.\ $ x \in X$.
\item There exists a choice of dimension $ D $ from (iii) and a symmetric kernel $\hat{\tau}\in L^2(\TorusD{D}\times\TorusD{D},\Leb)$ on the $D$-torus, such that $\tau(x,y)=\hat \tau(\pi(x),\pi(y)) $ for $ \mu\times\mu$-a.e.\ $(x,y) \in X \times X $.
\end{enumerate}
\end{thm}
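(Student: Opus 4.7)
The plan is to handle the four parts sequentially: parts (i)--(ii) are structural consequences of the compactness of $T$ and its commutation with $U^t$, while parts (iii)--(iv) follow a Halmos--von Neumann-style construction of a toral factor built from the joint eigenfunctions produced in (i).

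For (i) and (ii), I would first translate the operator identity $TU^t=U^tT$ into a pointwise statement on the kernel: testing both sides against $f\in L^2$ and changing variables under $\Phi^t$ (using $\mu$-invariance) gives $\tau(\Phi^t x,\Phi^t y)=\tau(x,y)$ for $(\mu\times\mu)$-a.e.\ $(x,y)$, which is the asserted $\Phi^t\times\Phi^t$-invariance. Next, since $T$ is compact and self-adjoint, its spectral decomposition $T=\sum_n\lambda_n P_n$ onto finite-dimensional eigenspaces $V_n$ at nonzero eigenvalues has each $V_n$ invariant under $U^t$ by commutation; because $U^t|_{V_n}$ is a strongly continuous unitary representation of $\real$ on a finite-dimensional space, it is simultaneously diagonalizable, yielding a joint eigenbasis $\{e_{n,i}\}$ of $T$ and $U^t$ inside $\mathcal{D}$. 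This gives $\overline{\ran T}\subseteq\mathcal{D}$, and self-adjointness gives $\mathcal{D}^\perp\subseteq(\ran T)^\perp=\ker T$; the remaining $U^t$-invariance of $\ran T$ and $\ker T$ is immediate from $TU^t=U^t T$. Finally, expanding the kernel as a Hilbert--Schmidt series $\tau(x,y)=\sum_n\lambda_n\sum_i e_{n,i}(x)\overline{e_{n,i}(y)}$ and using that $\mathcal{D}$ is closed under complex conjugation exhibits $\tau\in\mathcal{D}\otimes\mathcal{D}$.

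For (iii), the hypothesis that $\ran T$ contains non-constant functions supplies at least one joint eigenfunction with Koopman frequency $\omega\ne0$, and by ergodicity any Koopman eigenfunction has $\mu$-a.e.\ constant modulus and so can be rescaled to unit modulus. Picking such $z_1,\ldots,z_D$ at frequencies $\omega_1,\ldots,\omega_D$, the map $\pi(x)=(\arg z_1(x),\ldots,\arg z_D(x))/(2\pi)\in\TorusD{D}$ is measurable, and the eigenfunction identities $z_j\circ\Phi^t=e^{\ii\omega_j t}z_j$ convert directly into $\pi\circ\Phi^t=\pi+\vec\omega\,t\bmod 1$ with $\vec\omega=(\omega_1,\ldots,\omega_D)/(2\pi)$. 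For (iv), I would enlarge $D$ so that $\omega_1,\ldots,\omega_D$ generate, as a $\mathbb{Z}$-module, the group of Koopman frequencies entering the spectral expansion of $\tau$. Ergodicity makes each Koopman eigenspace one-dimensional, so any joint eigenfunction at frequency $\sum_j k_j\omega_j$ is a scalar multiple of the monomial $z_1^{k_1}\cdots z_D^{k_D}$, hence a function of $\pi$. Substituting into the series for $\tau$ exhibits it as the pullback $\tau(x,y)=\hat\tau(\pi(x),\pi(y))$ of a symmetric kernel on the torus, and since $\pi_*\mu$ is Haar measure on the closed subgroup generated by the rotation by $\vec\omega$ (by ergodicity), the resulting $\hat\tau$ lies in $L^2(\TorusD{D}\times\TorusD{D},\Leb)$.

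The main technical obstacle is ensuring that a finite $D$ suffices in (iv): the frequency set attached to $\tau$'s expansion is a priori only countable, and finite generation does not follow from $\tau\in L^2$ alone. I would address this by combining the Hilbert--Schmidt summability $\sum_n\lambda_n^2=\|\tau\|_{L^2}^2<\infty$ with a careful accounting of the rank-one rotational modes of $T$ to extract a maximal rationally independent set of generators of the required $\mathbb{Z}$-module, completed by the Halmos--von Neumann identification of any finitely generated torsion-free frequency group with $\mathbb{Z}^D$. Along the way one must verify measurability of the $\arg$ components of $\pi$ and the identification of $\pi_*\mu$ with Haar measure on the image subtorus, both of which are standard.
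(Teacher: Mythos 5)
Your derivation of Claims (i) and (ii) is correct but reverses the order of attack compared with the paper. You diagonalize $T$ and $U^t$ simultaneously first (in effect proving the paper's Proposition~\ref{prop:W_lambda} on the way), obtain $\overline{\ran T}\subseteq\Disc$ and $\Disc^\perp\subseteq\ker T$ from that diagonalization, and only then read off $\tau\in\Disc\otimes\Disc$ from the Hilbert--Schmidt kernel expansion in the joint eigenbasis. The paper instead proves the kernel-level statement first: it splits $L^2(X\times X,\mu\times\mu)$ into the $U^t\times U^t$-invariant orthogonal sum of $\Disc\otimes\Disc$, $\Disc^\perp\otimes\Disc^\perp$, $\Disc\otimes\Disc^\perp$, $\Disc^\perp\otimes\Disc$, kills the two mixed pieces by symmetry of $\tau$, and kills the $\Disc^\perp\otimes\Disc^\perp$ piece by combining the $\Phi^t\times\Phi^t$-invariance of $\tau$ with the ergodic theorem on the product system; Claim~(ii) then falls out of Claim~(i). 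Both routes are sound; yours is arguably more elementary in that it avoids reasoning about the (non-ergodic) product flow, at the cost of leaning on the spectral theorem for compact self-adjoint operators a step earlier. Your construction of $\pi$ in (iii) is the same as the paper's, which packages it in Proposition~\ref{prop:semi_conj_pi}.

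The genuine issue is in (iv), and you flag it honestly: a finite $D$ requires the group generated by the Koopman frequencies appearing in $\tau$'s expansion to be finitely generated as a $\mathbb{Z}$-module, and neither $\tau\in L^2$ nor compactness of $T$ gives this. The remedy you sketch does not close the gap: Hilbert--Schmidt summability $\sum_n\lambda_n^2<\infty$ constrains the eigenvalues $\lambda_n$ of $T$, not the arithmetic of the associated Koopman eigenfrequencies, so it cannot rule out a countable collection of frequencies generating a non-finitely-generated subgroup of $\real$; nor does ``a careful accounting of the rank-one rotational modes'' amount to an argument. In fact the paper's own proof of (iv) presupposes the same thing: it opens with the expansion $\tau=\sum_{\vec a,\vec b\in\integer^m}\tilde\tau_{\vec a\vec b}\,z_{\vec a}\otimes z_{\vec b}$, which already uses that the eigenvalue group is generated by finitely many rationally independent $\omega_1,\ldots,\omega_m$ --- a fact the paper justifies in Section~\ref{sect:theory} only under the continuity hypothesis of Assumption~\ref{asmptn:C0_ae}, which the statement of Theorem~\ref{thm:B} does not explicitly impose. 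To make (iv) airtight you should either add a finite-generation (or continuity-of-eigenfunctions) hypothesis explicitly, or restrict to the case $T=P$ under Assumptions~\ref{asmptn:standing} and~\ref{asmptn:C0_ae}; your outline as written leaves this unresolved.
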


Note that Theorems \ref{thm:A} and \ref{thm:B} hold for operators acting on $L^2 $ spaces only. To be able to say more about the behavior of these operators on spaces of continuous functions, an additional assumption on the Koopman eigenfunctions and the observation map will be needed. \blue{In what follows, $F_D$ will denote the orthogonal projection of $F$ onto the quasiperiodic subspace $\Disc$ from~\eqref{eqn:L2_decomp}}. 
\begin{Assumption}\label{asmptn:C0_ae}
\blue{All Koopman eigenfunctions, as well as the quasiperiodic component of the observation map $F_{\Disc}$, are continuous.}
\end{Assumption}

\blue{Although we will explicitly assume that $F_{\Disc}$ is continuous, we are not aware of a counter-example where the observation map $F$ is continuous (in accordance with Assumption~\ref{asmptn:standing}), the Koopman eigenfunctions $z$ are continuous, but $F_{\Disc}$ is not continuous. On the other hand, smooth dynamical systems on smooth manifolds with discontinuous Koopman eigenfunctions (and in fact, pure point spectra) are known to exist, in both discrete- \cite{AnosovKatok1970} and continuous-time settings \cite{ConstantinEtAl2008}. This indicates that the continuity requirement on Koopman eigenfunctions in Assumption~\ref{asmptn:C0_ae} is complementary to the assumed continuity of the dynamical flow in Assumption~\ref{asmptn:standing}. The following theorem establishes a number of properties of $P$ under these additional continuity assumptions.} 
\begin{thm}\label{thm:D}
Let Assumptions \ref{asmptn:standing} and \ref{asmptn:C0_ae} hold. Then, the kernel $p$ of the operator $P$ from Theorem \ref{thm:A} is uniformly continuous on a full-measure, dense subset of $ X \times X $. As a result: 
\begin{enumerate}[(i)]
\item $ P $ maps $ L^2( X, \mu) $ into the space of $ \mu $-a.e.\ continuous functions on $ X$. 
\item $ P $ compactly maps $ C^0( X ) $ into itself. 
\item The norms of the operators $ P $ in (i) and (ii) are bounded above by $ \lVert p \rVert_{L^\infty(X \times X)} $.
\item For every $f\in C^0(X)$, $P_{Q} f$ is a sequence of continuous functions converging $\mu$-a.e.\ to $P f$.
\end{enumerate}
\end{thm}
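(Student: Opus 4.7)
The plan is to leverage Theorem~\ref{thm:B}(iv) applied to $T = P$ in order to factor the kernel $p$ through the torus map $\pi$, and then exploit the continuity of $\pi$ (guaranteed by Assumption~\ref{asmptn:C0_ae}) to transport regularity from $\pi(X)$ back to $X$. First, under Assumption~\ref{asmptn:C0_ae}, each component of $\pi$ is a continuous Koopman eigenfunction, so $\pi : X \to \TorusD{D}$ is continuous; since $\Phi^t$ descends to a minimal rotation on the compact abelian group $\overline{\pi(X)}$, the image is topologically well-behaved. Theorem~\ref{thm:B}(iv) then provides a symmetric kernel $\hat{\tau} \in L^2(\TorusD{D} \times \TorusD{D}, \Leb)$, which we will denote $\hat p$, satisfying $p(x,y) = \hat p(\pi(x), \pi(y))$ for $\mu \times \mu$-a.e.\ $(x,y)$, so the task reduces to showing $\hat p$ admits a representative that is uniformly continuous on a full-measure subset of $\overline{\pi(X)} \times \overline{\pi(X)}$.

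To identify a continuous representative of $\hat p$, I would work directly with the pre-limit objects. The function $g(x,y) = \lVert F(x) - F(y) \rVert^2$ is continuous on $X \times X$, and $d_Q^2(x,y)$ is precisely its Birkhoff average along $\Phi^{\Delta t} \times \Phi^{\Delta t}$. Decomposing $F = F_{\Disc} + F_{\Disc^\perp}$, the cross-terms average out because $F_{\Disc^\perp}$ lies in the continuous-spectrum subspace (a von Neumann averaging argument); the $F_{\Disc^\perp}$--$F_{\Disc^\perp}$ terms average to a constant on each ergodic component of the product dynamics; and only the quasiperiodic self-term $\lVert F_{\Disc}(\Phi^{q\Delta t}x) - F_{\Disc}(\Phi^{q\Delta t}y) \rVert^2$ contributes non-trivial $(x,y)$-dependence. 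Because $F_{\Disc}$ is continuous by Assumption~\ref{asmptn:C0_ae}, and its delay averages depend only on the spectral coefficients on eigenfunctions (hence only on $\pi(x), \pi(y)$), the pointwise limit $d_\infty^2(x,y)$ is continuous on $X \times X$ and factors through $\pi$. Passing through the continuous shape function $h$ and Markov normalization (an integration against $\mu$, which preserves continuity by dominated convergence), we obtain a continuous version of $\hat p$ on $\pi(X) \times \pi(X)$, and hence a uniformly continuous $p$ on the full-measure, dense subset $\pi^{-1}(\pi(X)) \times \pi^{-1}(\pi(X))$ of $X\times X$.

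Once continuity of (a representative of) $p$ is in hand, conclusions (i)--(iv) follow from standard arguments. For (i), writing $Pf(x) = \int_X \hat p(\pi(x),\pi(y)) f(y)\,d\mu(y)$ and using continuity of $\pi$, continuity of $\hat p$ on $\pi(X)\times\pi(X)$, boundedness, and dominated convergence, $Pf$ is continuous on the full-measure set where $\pi(x)$ avoids the (null) discontinuity set of $\hat p$. For (ii), the same argument shows $P$ maps $C^0(X)$ into itself; uniform continuity of $p$ gives equicontinuity of the image of the unit ball of $C^0(X)$, and compactness of $X$ combined with Arzel\`a-Ascoli yields compactness of $P|_{C^0(X)}$. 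The norm bound (iii) is immediate: since $\mu$ is a probability measure, Cauchy-Schwarz gives $\lVert Pf \rVert_\infty \leq \lVert p \rVert_{L^\infty} \lVert f \rVert_{L^1} \leq \lVert p \rVert_{L^\infty} \lVert f \rVert_{L^2}$ for (i), and $\lVert Pf \rVert_\infty \leq \lVert p \rVert_{L^\infty} \lVert f \rVert_\infty$ for (ii). For (iv), Theorem~\ref{thm:A} gives $p_Q \to p$ in $L^1(X\times X, \mu\times\mu)$, so for each fixed continuous (hence bounded) $f$, Fubini yields $\int_X |P_Q f - P f|\,d\mu \leq \lVert f \rVert_\infty \lVert p_Q - p \rVert_{L^1} \to 0$; passing to an $L^1$-convergent subsequence and combining with the uniform bound $\lVert P_Q f \rVert_\infty \leq \lVert p_Q \rVert_{L^\infty} \lVert f \rVert_\infty$, dominated convergence gives $\mu$-a.e.\ pointwise convergence along a subsequence, and since the full sequence converges in $L^1$, the $\mu$-a.e.\ limit is $Pf$.

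The main obstacle I anticipate is the identification of a genuinely continuous representative of $\hat p$: one must either rigorously justify that the $\mu\times\mu$-a.e.\ limit of the Birkhoff-type averages $d_Q^2$ coincides with a continuous function (using minimality of the rotation on $\overline{\pi(X)}$ together with uniform control on the oscillations of $F_{\Disc}$ averages), or separately upgrade the $L^p$ convergence from Theorem~\ref{thm:A} to uniform control on a suitable subset. The rest is essentially bookkeeping once this continuity has been pinned down.
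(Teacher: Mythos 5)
Your overall strategy is workable, but you route through Theorem~\ref{thm:B}(iv) unnecessarily and thereby relocate, rather than resolve, the central difficulty. Theorem~\ref{thm:B}(iv) only gives $\hat p\in L^2(\TorusD{D}\times\TorusD{D},\Leb)$ --- it says nothing about continuity of $\hat p$, and establishing a continuous representative of $\hat p$ is exactly as hard as establishing one directly for $k_\infty$ on $X\times X$. You correctly identify this as the critical gap, but you leave it open. The paper closes it before ever mentioning the torus: Theorem~\ref{thm:inf_delay_decomp}(iv) applies Krengel's uniform ergodic theorem to the Birkhoff averages $\Psi_Q(F_{\Disc})$ to conclude $\Psi(F_{\Disc})\in C^0(X\times X)$, which combined with~(iii) (the $F_{\Disc^\perp}$--contribution is a constant a.e.) gives the required full-measure dense set of uniform continuity (Lemma~\ref{lem:KInf}(iv)). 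Markov normalization then preserves this because $\rho$ is $\mu$-a.e.\ constant and $\rho\rvert_X$ continuous (Corollary~\ref{cor:inf_delay}). Claims~(i)--(iii) then follow exactly as you sketch, via the arguments of Lemma~\ref{lem:KL2C0}, so that part is aligned with the paper.

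The one claim where your reasoning actually breaks is~(iv). $L^1$ convergence of $p_Q\to p$ gives $P_Q f\to Pf$ in $L^1(X,\mu)$, which yields $\mu$-a.e.\ convergence only along a subsequence; you cannot upgrade this to full-sequence a.e.\ convergence by appealing again to the $L^1$ limit, since a sequence can converge in $L^1$ without converging a.e. The statement asserts $\mu$-a.e.\ convergence of the whole sequence $(P_Q f)_Q$. The fix is to use the stronger mode of convergence the paper actually has available: $k_Q\to k_\infty$ (hence $p_Q\to p$) $\mu\times\mu$-a.e., which comes for free from the pointwise Birkhoff ergodic theorem applied to the Cesàro average defining $d_Q^2$ (and is recorded as almost uniform convergence in Lemma~\ref{lem:KInf}(v)). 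Then by Fubini, for $\mu$-a.e.\ $x$ one has $p_Q(x,\cdot)\to p(x,\cdot)$ $\mu$-a.e., and dominated convergence (the kernels are uniformly bounded by compactness of $X$ and continuity of the shape function) gives $P_Q f(x)\to Pf(x)$ for $\mu$-a.e.\ $x$ along the full sequence. This is the argument that proves~(iv), and it is qualitatively different from the $L^1$ route you propose.
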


\begin{rk*} \blue{The class of integral operators $P_Q$ studied in this work has previously been used for dimension reduction and mode decomposition of high-dimensional time series (e.g., \cite{GiannakisMajda11c,GiannakisMajda12a,BerryEtAl13,SlawinskaGiannakis17}). In these works, a phenomenon called in \cite{BerryEtAl13} ``timescale separation'' was observed; namely, it was observed that at increasingly large $ Q $ the eigenfunctions of $ P_{Q} $ capture increasingly distinct timescales of a multiscale input signal. Theorems~\ref{thm:A} and~\ref{thm:B} provide an interpretation of this observation from the point of view of spectral properties of Koopman operators; in particular, from the fact that $ P_{Q} $ has, in the limit $ Q \to \infty $, common eigenfunctions with $ U^t $ and the latter capture distinct timescales associated with the eigenfrequencies $ \omega $. 
Even though in this work we focus on the class of Markov operators $P_Q$, analogous results also hold for other classes of integral operators for data analysis that employ delays, including the covariance operators used in singular spectrum analysis (SSA) \cite{PackardEtAl80,BroomheadKing86,VautardGhil89} and the related Hankel matrix analysis \cite{TuEtAl14,BruntonEtAl17,ArbabiMezic16}. Collectively, these results establish a connection between two major branches of data analysis techniques for dynamical systems, namely those based on Koopman operators, and those based on kernel integral operators.} \end{rk*}

Theorems~\ref{thm:A}--\ref{thm:D} 
are proved in Section~\ref{sect:proof_main}. A result analogous to Theorem~\ref{thm:A}, but restricted to smooth manifolds, smooth observation maps, and Koopman operators with pure point spectrum and smooth eigenfunctions, was presented in \cite{Giannakis17}. Theorem~\ref{thm:A} generalizes this result to non-smooth state spaces and Koopman operators with mixed spectra. With this result, the eigenvalues and eigenfunctions of $P_{Q}$ consistently approximate those of $P$, and the latter can be used in turn to \blue{construct orthonormal bases of Koopman eigenspaces. The availability of such bases is useful in many applications, including approximation techniques for the eigenvalues and eigenfunctions of $U^t$ or its generator (defined in Section~\ref{sect:theory} ahead). One such technique will be presented in Section~\ref{sect:Galerkin}, utilizing the eigenvalues and eigenfunctions of $P$ to perform diffusion regularization of the generator, and then solving the eigenvalue problem for the generator via a Petrov-Galerkin method. Note that the Markov property of $P$ is not trivial; for instance, it does not hold for covariance kernels. The commutativity between $U^t$ and $P$, in conjunction with the Markov property, lead to well posedness of these schemes despite the presence of a continuous spectrum of $V$. }

\paragraph{Physical measures.} A point $x\in M$ is said to be in the basin of the measure $\mu$ with respect to the discrete-time map $\Phi^{\Delta t}$ if
\begin{equation}\label{eqn:Phys1}
\lim_{N\to\infty} \frac{ 1 }{ N } \sum_{n=0}^{N-1} f(\Phi^{n\, \Delta t}( x)) = \int_X f(y) \, d\mu(y), \quad \quad \forall f\in C^0(M).
\end{equation}
The basin $\basin$ of an invariant ergodic measure $\mu$ always includes $\mu$-a.e.\ point in the support of $\mu$ (in this case, $X$), and is a forward-invariant set. An important property that we need the invariant measure $\mu$ to have is that it is physical \cite{SRB_young}. Moreover, we will require that the dynamics has a suitable absorbing ball property. These assumptions can be summarized as follows:

\begin{Assumption}\label{asmptn:Phys2}
The set $\basin$ of points satisfying \eqref{eqn:Phys1} has positive Lebesgue measure, i.e., the measure $ \mu $ is physical. Moreover, there exists a subset $\mathcal{V} \subseteq \mathcal{B}_\mu$, also of positive Lebesgue measure, such that for every $ x_0 \in \mathcal{V} $ there exists a compact set $ \nbd $ (which may depend on $x_0$, and necessarily includes $X$), such that the orbit $ x_n = \Phi^{n \, \Delta t}(x_0) $ enters $\nbd$ and never leaves it.
\end{Assumption}
Examples where Assumption~\ref{asmptn:Phys2} is satisfied include: (i) ergodic flows on compact manifolds \blue{with Lebesgue absolutely continuous, fully supported, invariant measures}, in which case $ \mathcal{U } = \mathcal{V} =\overline{\mathcal{B}_\mu} = M = X $; (ii) certain classes of dissipative flows on potentially noncompact manifolds (e.g., the Lorenz 63 (L63) system on $ M = \real^3 $ \cite{Lorenz63} studied in Section~\ref{sect:examples} ahead); and (iii) certain classes of dissipative partial differential equations possessing inertial manifolds and physical measures \cite{LuEtAl13,LianEtAl16}.

The following result shows that under Assumptions~\ref{asmptn:standing}--\ref{asmptn:Phys2}, the nonzero eigenvalues of $ P_{Q} $ and the corresponding (continuous) eigenfunctions can be approximated to any degree of accuracy by data-driven operators $P_{Q,\Ndata}$, acting on the finite-dimensional Hilbert space $L^2(\nbd,\mu_N)$ associated with the sampling probability measure $\mu_N = \sum_{n=0}^{N-1} \delta_{x_n} / N$. \blue{These operators are constructed from time-ordered measurements $F(x_0),\ldots,F(x_{\Ndata-1})$ of the observable $ F$ analogously to~\eqref{eqn:KOp}--\eqref{eqn:def:kQ}, replacing throughout integrals with respect to the invariant measure $\mu$ by integrals with respect to the sampling measure $\mu_N$. Moreover, because $P_Q$ and $P_{Q,N}$ act on different Hilbert spaces, we will approach the problem of comparing their eigenvalues and eigenfunctions through integral operators $P''_Q: C^0(\mathcal{U}) \to C^0(\mathcal{U})$ and $P''_{Q,N}: C^0(\mathcal{U}) \to C^0(\mathcal{U})$, defined analogously to $P_Q$ and $P_{Q,N}$, respectively, but acting on the same Banach space of continuous functions on $\mathcal{U}$. A complete description of these constructions will be made in Section~\ref{sect:numerics}.}

\begin{thm}\label{thm:E}
Let Assumptions~1--4 hold. Then, for any initial point $ x_0 \in \mathcal{V} $, 
\begin{enumerate}[(i)]
\item \blue{Every eigenfunction of $P_Q$ ($P_{Q,N}$) at nonzero eigenvalue extends to a continuous eigenfunction of $P''_Q$ ($P''_{Q,N}$), corresponding to the same eigenvalue.} 
\item As $ N \to \infty $, $ P''_{Q,\Ndata} $ converges in spectrum to $P''_{Q}$ in the sense of Corollary~\ref{corSpectral}. 
\end{enumerate}
\end{thm}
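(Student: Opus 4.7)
\textbf{Proof plan for Theorem~\ref{thm:E}.} The kernel $k_Q$ built from $d_Q$ via the Gaussian shape function is continuous on $M\times M$ by continuity of $F$ and $\Phi^t$ (Assumption~\ref{asmptn:standing}), and its restriction to the compact set $\mathcal{U}\times\mathcal{U}$ is uniformly continuous and bounded below by a strictly positive constant, using Assumption~\ref{asmptn:ker}(ii). The Markov normalization producing $p_Q$ from $k_Q$ amounts to dividing by strictly positive, continuous functions of $x$, whether the normalizing integrals are taken against $\mu$ or $\mu_N$, so both $p_Q$ and $p_{Q,N}$ are continuous on $\mathcal{U}\times\mathcal{U}$. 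For part~(i), given an eigenpair $P_Q\phi=\lambda\phi$ with $\lambda\neq 0$, the identity $\phi(x)=\lambda^{-1}\int p_Q(x,y)\phi(y)\,d\mu(y)$ holds $\mu$-a.e., but the right-hand side is defined and continuous in $x\in\mathcal{U}$, furnishing a continuous representative that is an eigenfunction of $P''_Q$ at the same eigenvalue. The analogous formula with $\mu$ replaced by $\mu_N$ extends eigenfunctions of $P_{Q,N}$ (which are supported on the finite sample set) to continuous eigenfunctions of $P''_{Q,N}$ on $\mathcal{U}$.

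For part~(ii), my plan is to establish uniform convergence $p_{Q,N}\to p_Q$ on $\mathcal{U}\times\mathcal{U}$, whence $\lVert P''_{Q,N}-P''_Q\rVert_{C^0(\mathcal{U})\to C^0(\mathcal{U})}\to 0$ follows immediately from the standard kernel-to-operator norm bound. Norm convergence of these compact operators then yields the spectral convergence statement via the same perturbation-theoretic results invoked in Corollary~\ref{corSpectral}. The key input is the physical-measure hypothesis: by Assumption~\ref{asmptn:Phys2} and $x_0\in\mathcal{V}\subseteq\mathcal{B}_\mu$, the orbit $\{x_n\}$ lies in the compact set $\mathcal{U}$, and for every $g\in C^0(\mathcal{U})$ we have $\int g\,d\mu_N\to\int g\,d\mu$ by~\eqref{eqn:Phys1}. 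Applied with $g(\cdot)=k_Q(x,\cdot)$ at each fixed $x$, this gives pointwise convergence of the first-stage degree function $x\mapsto\int k_Q(x,y)\,d\mu_N(y)$ to its $\mu$-counterpart; upgrading pointwise to uniform convergence on $\mathcal{U}$ is handled via a finite $\epsilon$-net in $\mathcal{U}$ combined with equicontinuity of the family $\{k_Q(x,\cdot):x\in\mathcal{U}\}$, which is immediate from uniform continuity of $k_Q$ on $\mathcal{U}\times\mathcal{U}$.

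The principal obstacle is propagating this uniform convergence through the nonlinear, ratio-based structure of the two-stage Markov normalization. Here strict positivity of $k_Q$ on the compact set $\mathcal{U}\times\mathcal{U}$ is essential: it provides a uniform positive lower bound on the first-stage degree function, independent of $N$, and the same holds at the second stage once uniform convergence of the first-stage-normalized kernel is in hand. With uniform convergence of numerators, uniform convergence of denominators, and a uniform positive lower bound on the denominators, uniform convergence passes through the ratio, yielding $\lVert p_{Q,N}-p_Q\rVert_{C^0(\mathcal{U}\times\mathcal{U})}\to 0$. Without the positivity in Assumption~\ref{asmptn:ker}(ii) the denominators could pinch and ruin uniform control, so this is where my plan genuinely leans on the specific structure of the shape function $h$. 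Combining the uniform kernel convergence with the extension mechanism from part~(i) and norm-convergence spectral perturbation theory for compact operators then completes the proof of Theorem~\ref{thm:E}.
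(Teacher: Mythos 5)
Your treatment of part~(i) matches the paper's Lemma~\ref{lemSpec}: a nonzero eigenvalue $\lambda$ of $P_Q$ (resp.\ $P_{Q,N}$) with eigenfunction $\phi$ gives rise to the continuous function $\lambda^{-1}P'_Q\phi$ (resp.\ $\lambda^{-1}P'_{Q,N}\phi$) in $C^0(\mathcal{U})$, which is a fixed point of $P''_Q$ (resp.\ $P''_{Q,N}$) at the same eigenvalue, and your observations about continuity and strict positivity of $k_Q$ on $\mathcal{U}\times\mathcal{U}$ correctly justify continuity and uniform boundedness of the normalization factors. That part is fine.

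Part~(ii) has a genuine gap, and it is precisely the obstruction that forces the paper into the machinery of \emph{compact} convergence rather than norm convergence. You correctly argue that $\rho_{Q,N}\to\rho_Q$, $\sigma_{Q,N}\to\sigma_Q$, and hence $p_{Q,N}\to p_Q$ uniformly on $\mathcal{U}\times\mathcal{U}$ (equicontinuity plus weak convergence of $\mu_N$ to $\mu$, with the positivity lower bound controlling the ratios). But from this you infer that $\lVert P''_{Q,N}-P''_Q\rVert_{C^0\to C^0}\to 0$ ``by the standard kernel-to-operator norm bound,'' and that inference is false. The two operators are integral operators against \emph{different measures}: $P''_{Q,N}f(x)=\int p_{Q,N}(x,y)f(y)\,d\mu_N(y)$ whereas $P''_Q f(x)=\int p_Q(x,y)f(y)\,d\mu(y)$. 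Splitting the difference,
\begin{equation*}
(P''_{Q,N}-P''_Q)f(x)=\int\bigl[p_{Q,N}(x,y)-p_Q(x,y)\bigr]f(y)\,d\mu_N(y)+\int p_Q(x,y)f(y)\,d(\mu_N-\mu)(y),
\end{equation*}
your uniform kernel convergence kills the first term uniformly in $\lVert f\rVert_{C^0}\le 1$, but the second term does \emph{not} tend to zero in operator norm: because $\mu_N$ (atomic) and $\mu$ (generically non-atomic) are mutually singular, $\sup_{\lVert f\rVert_{C^0}\le 1}\bigl|\int p_Q(x,y)f(y)\,d(\mu_N-\mu)(y)\bigr|=\int p_Q(x,y)\,d\lvert\mu_N-\mu\rvert(y)$, and since $p_Q$ is bounded below by a positive constant on $\mathcal{U}\times\mathcal{U}$ this is bounded away from zero for every $N$. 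So operator norm convergence simply does not hold in this setting. The paper instead invokes \emph{compact convergence} (Proposition~\ref{prop:C0norm_PNQ}, drawing on von Luxburg et al.\ and empirical-process theory): one proves pointwise convergence of $P''_{Q,N}$ to $P''_Q$ together with collective compactness of the family $\{(P''_Q-P''_{Q,N})f_N\}$ over bounded sequences $f_N$, which is weaker than norm convergence yet still yields the spectral convergence conclusion of Corollary~\ref{corSpectral}. Your plan needs to be repaired at exactly this point: replace the norm-convergence claim with an equicontinuity/Arzel\`a--Ascoli argument establishing compact convergence, and then appeal to the spectral perturbation theorem for compactly (rather than norm) convergent sequences of compact operators.
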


Theorem~\ref{thm:E} will be proved in Section~\ref{sect:numerics}. Figure \ref{fig:4DTorus_additive} shows numerical eigenfunctions of $P_{Q,N}$ obtained from data generated by two mixed-spectrum dynamical systems, described in~\eqref{eqn:4D_additive} and~\eqref{eqn:l63_skew_nonlinear}, respectively. In both examples, we start with a $C^\infty$ vector field $\vec V$ on a smooth manifold $M$. In the first example, $M=X=\TorusD{4}$, so $\nbd=X=M$; in the second example, $M=\real^3\times S^1$ and $X= X_\text{Lor} \times S^1 \subset M $, where $X_\text{Lor} $ is the Lorenz~63 attractor embedded in $\real^3$. Eigenfunctions of the operator $P_{Q,N}$ are then computed using a large number of delays, $Q = 2000$. 

\begin{figure}
\centering
\includegraphics[width=.44\textwidth]{\Path 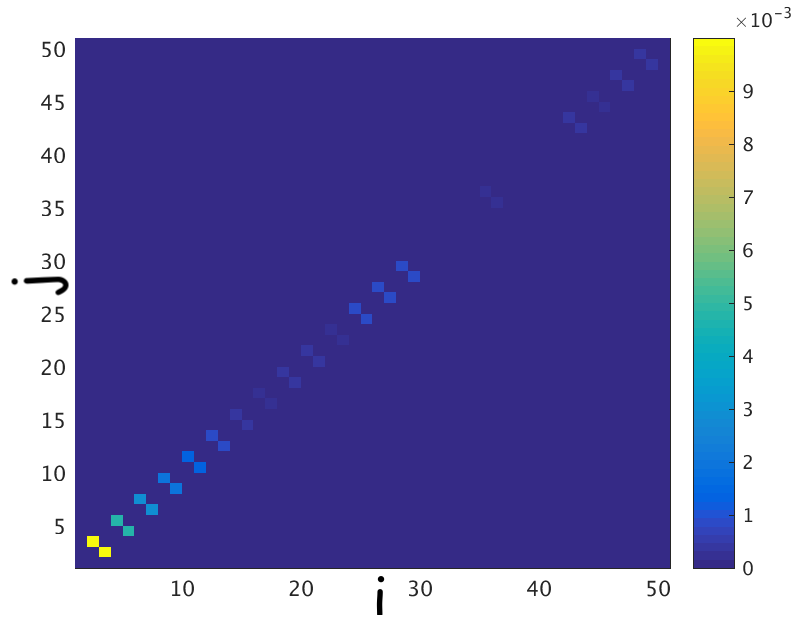}
\includegraphics[width=.54\textwidth]{\Path 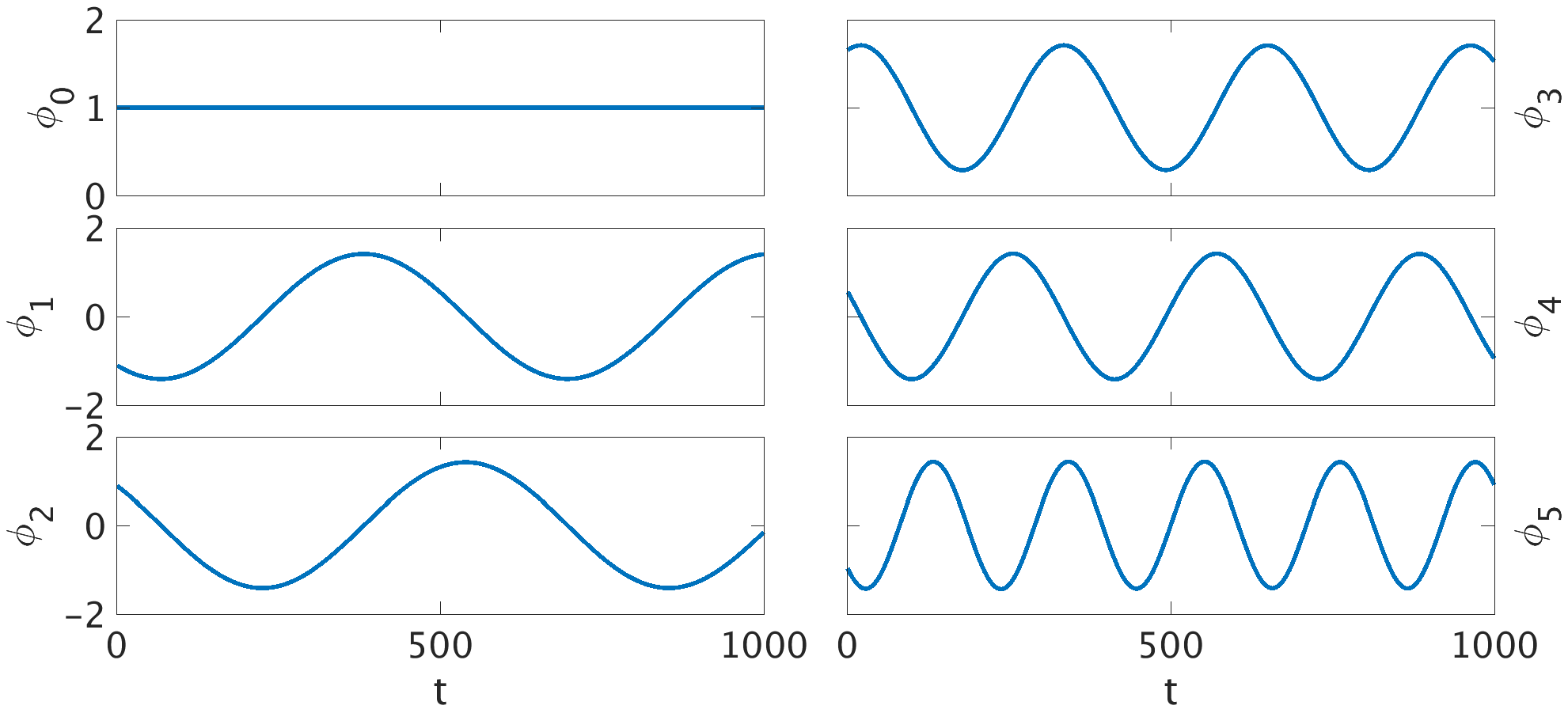}
\includegraphics[width=.44\textwidth]{\Path 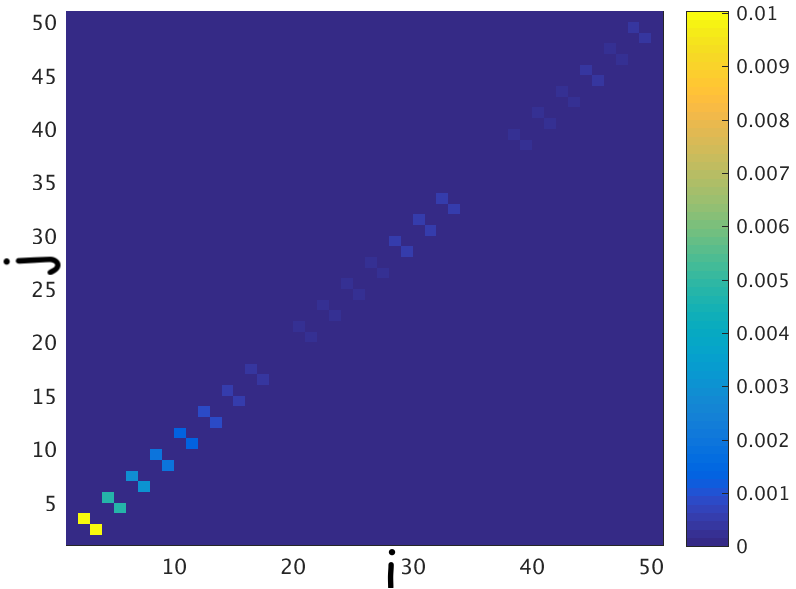}
\includegraphics[width=.54\textwidth]{\Path 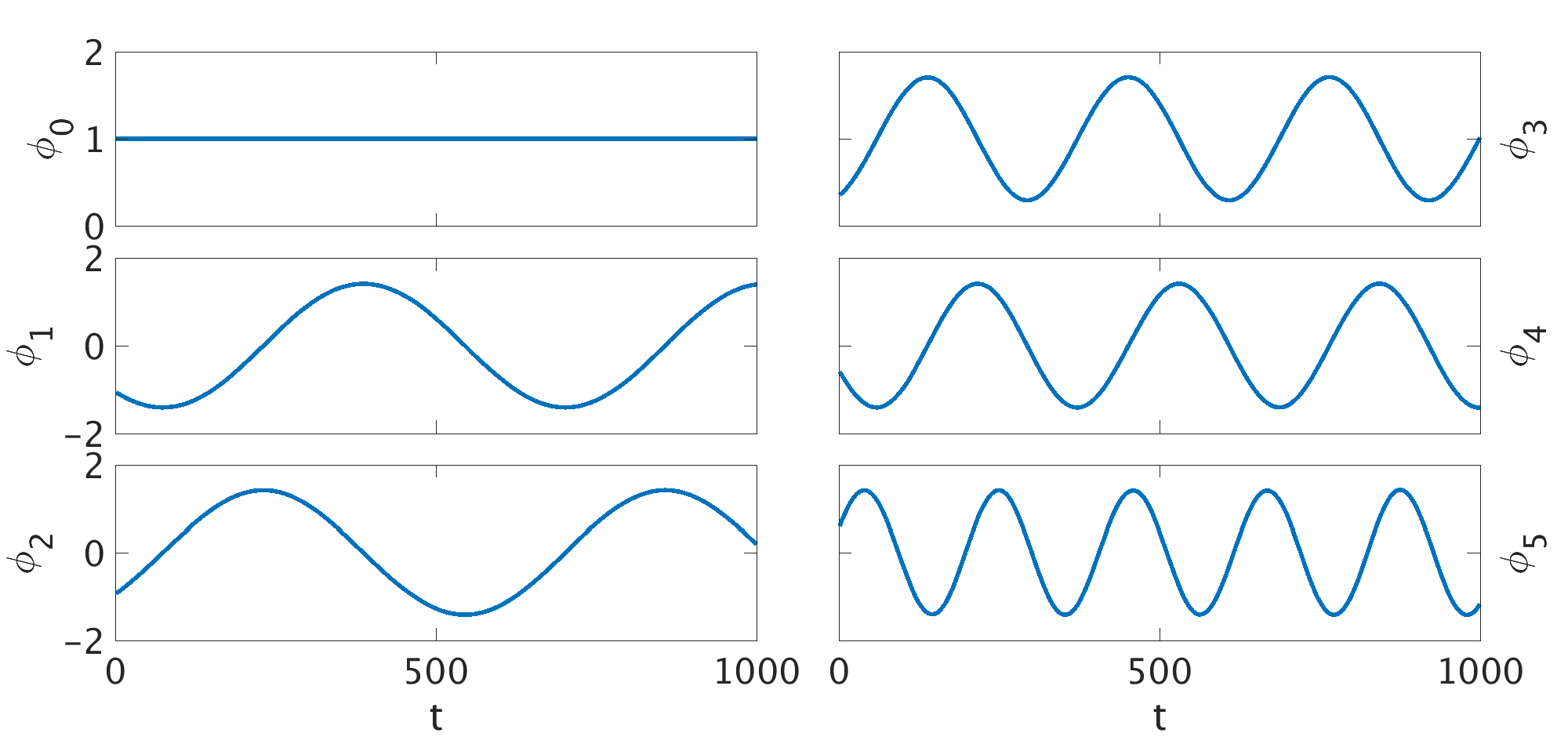}
\caption[results]{Representative eigenfunctions of $P_{Q,N}$ and the associated matrix representation of the generator $ V $ for the torus-based flow $\Phi^t_{\mathbb{T}^3} \times \Phi_\omega^t$ in \eqref{eqn:4D_additive} (top panels) and the L63-based flow $\Phi^t_\text{Lor}\times \Phi_\omega^t$ in \eqref{eqn:l63_skew_nonlinear} (bottom panels). The eigenfunctions $\phi_i$ have been computed using a large number of delays, $Q=2000$, and plotted as a time series along an orbit. These time series are near-sinusoidal, with frequencies close to integer multiples of the rotation frequency $ \omega $. \blue{Moreover, each frequency has multiplicity 2, and the corresponding time series are phase-shifted by $\pi/2$.} The left-hand panels show the absolute values $ |V_{ij}| = |\langle \phi_i, V \phi_j \rangle | $ of the matrix representation of the generator in the $ \{ \phi_i \} $ basis. Note that $ V_{ji} \approx - V_{ij} $, which is consistent with the fact that $V$ is a skew-adjoint operator. Since the first eigenfunction $\phi_0$ of $P_{Q,N}$ is the constant function and $V\phi_0=0$, the first column and row only have zero entries. \blue{Together, the $2\times 2$ block-diagonal form of the matrix representations of $V$ and the structure of the eigenfunction time series indicate that each of the pairs $(\phi_1,\phi_2), (\phi_3,\phi_4), \ldots$ spans an eigenspace of $V$, which is consistent with Theorems~\ref{thm:A}, \ref{thm:B}--\ref{thm:E} and Corollary~\ref{corSpectral}.}}
\label{fig:4DTorus_additive}
\end{figure}

Using the eigenvalues and eigenfunctions of $P_{Q,N}$, \blue{we will also construct data-driven Galerkin schemes for the eigenvalue problem of the generator, which are structurally identical to its counterparts formulated in terms of the eigenvalues and eigenfunctions of $P$. Because we do not assume a priori knowledge of the vector field of the dynamics and/or closed-form expressions for the eigenfunctions of $P_{Q,N}$, these schemes will estimate the action of the generator on eigenfunctions through finite-difference approximations at the sampling interval $\Delta t $. In effect, $\Delta t$ will play the role of an additional asymptotic approximation parameter, such that the data-driven solutions converge in a suitable joint limit of vanishing sampling interval ($\Delta t \to 0$), large data ($N\to\infty$), infinitely many delays ($Q\to\infty$), and infinite Galerkin approximation space dimension. This convergence result, \blue{along with minimal regularity requirements on the dynamical flow and the kernel, will be stated in a precise manner in Proposition~\ref{prop:Fin_diff} and Assumption~\ref{asmptn:Cr}, respectively}. Note that, intuitively, our data-driven Galerkin framework for the generator $V$ requires $\Delta t$ as an additional approximation parameter over methods that approximate the Koopman subgroup generated by $U^{\Delta t} $ at a fixed time step $ \Delta t $, since $V$ encodes the information of the entire Koopman group, parameterized by the real time parameter $t$.} 

\paragraph{Outline of the paper.} In Section~\ref{sect:theory}, we review some important concepts from the spectral theory of dynamical systems. In Section~\ref{sect:kernel}, we construct the integral operator $P_{Q}$, which is the key tool of our methods and is also the operator described in Theorems~\ref{thm:A}--\ref{thm:D}. Next, we prove these theorems and Corollary~\ref{corSpectral} in Section~\ref{sect:proof_main}. In Section~\ref{sect:Galerkin}, we present a Galerkin method for the eigenvalue problem for the Koopman generator, with a small amount of diffusion added for regularization, formulated in the eigenbasis of $ P $. In Section~\ref{sect:numerics}, we introduce the data-driven realization of $P_{Q}$, and establish the spectral convergence properties stated in Theorem~\ref{thm:E}, along with the convergence properties of the associated data-driven Galerkin scheme for the generator. In Section~\ref{sect:examples}, the methods are applied to two mixed-spectrum flows, followed by a discussion of the results. 

\section{Overview of spectral methods for dynamical systems}\label{sect:theory}

In this section, we review some concepts from the spectral theory of dynamical systems and establish some facts about Koopman eigenfunctions. Henceforth, we use the notations $ \langle f, g \rangle = \int_X f^* g \, d\mu $ and $ \lVert f \rVert = \langle f, f \rangle^{1/2} $ to represent the inner product and norm of $L^2( X,\mu) $, respectively. 

\paragraph{Generator of a flow.} By continuity of the flow $ \Phi^t $, the family of operators $U^t$ is a strongly continuous, 1-parameter group of unitary transformations of the Hilbert space $L^2(X,\mu)$. By Stone's theorem \cite{Stone1932}, any such family has a generator $V$, which is a skew-adjoint operator with a dense domain $ D( V ) \subset L^2(X,\mu)$, defined as 
\begin{equation}\label{eqn:def_gen_flow}
V f:=\lim_{t\to 0} \frac{ 1 }{ t } \left(U^t f - f\right), \quad f \in D( V ).
\end{equation}
The operators $U^t$ and $V$ share the same eigenfunctions; in particular, $z \in D(V)$ with $ U^t z = e^{i\omega t } z $ satisfies
\[Vz=i\omega z.\]
In light of \eqref{eqn:def_gen_flow} and the above relation, we can interpret the quantity $ \omega \in \real$ as a frequency intrinsic to the dynamical system (which we sometimes refer to as an ``eigenfrequency''). 

\paragraph{Vector fields as generators.} If we start with a vector field $\vec V$ on a \blue{$C^1$ manifold $M$}, then under appropriate regularity conditions (for example, $\vec V$ is locally Lipschitz continuous and satisfies suitable growth bounds at infinity), this vector field induces a $C^1 $ flow $\Phi^t : M \to M $ defined for all $ t \in \real $. Suppose that there is a compact invariant set $X\subseteq M$ with an ergodic invariant measure $\mu$. This set $X$ is not necessarily a submanifold, and may not even have any differentiability properties. Nevertheless, $(X,\Phi^t,\mu)$ is an ergodic dynamical system with an associated strongly-continuous, unitary group of Koopman operators $ U^t $. \blue{Acting on $ C^1(M) $ functions restricted to $ X $, the generator $ V $ of this group coincides with the vector field $\vec V$, the latter viewed as an operator $ \vec V : C^1(M) \to C^0(M) $.} For example, in quasiperiodic systems, $X=M=\TorusD{m}$, $ \vec V $ generates a rotation, and $\mu$ is equivalent to the Lebesgue volume measure. On the other hand, for the Lorenz attractor (see \eqref{eqn:l63}), $M=\real^3$, $ \vec V $ is smooth and dissipative, $X$ is a compact subset with non-integer fractal dimension \cite{LorentzFract}, and $\mu$ is supported on $X$.

\paragraph{Eigenfunctions as factor maps.} We state the following properties of a Koopman eigenfunction $ z $ of an ergodic dynamical system. 
\begin{enumerate}
\item If $ z $ corresponds to a nonzero eigenfrequency $ \omega $, then it has zero mean with respect to the invariant measure $\mu$. This can be concisely expressed as $\langle 1,z \rangle=0$.
\item The flow $\Phi^t$ is semi-conjugate to the irrational rotation by $\omega t$ on the unit circle, with $z$ acting as a semiconjugacy map. This follows directly from \eqref{eqn:Def_koop_eigen}. Since the eigenfunctions are $L^2$ equivalence classes, the semiconjugacy is measure-theoretic (holds $\mu$-a.e.), but would be $C^r$ if the eigenfunctions have a $C^r$ representation.
\item Normalized eigenfunctions with $ \lVert z \rVert = 1 $ have $\lvert z( x) \rvert = 1 $ for $ \mu $-a.e.\ $ x \in X $, by \eqref{eqn:Def_koop_eigen}. As a result, the map $z$ can now be viewed as a projection onto a circle in a measure-theoretic sense, $ z(x)\in S^1 $ for $ \mu $-a.e.\ $ x \in X $.
\end{enumerate}


\paragraph{Eigenfunctions form a group.} Another important property of Koopman eigenfunctions for ergodic dynamical systems is that they form a group under multiplication. That is, the product of two eigenfunctions of $U^t$ is again an eigenfunction, because of the following relation:
\begin{gather*}U^t z_i=\exp(it\omega_i) z_i, \quad \ i\in \{1,2 \}, \\
\implies U^t(z_1 z_2) = (U^t z_1)(U^t z_2) = \exp(it(\omega_1+\omega_2)) z_1 z_2.
\end{gather*}
Moreover, an analogous relation holds for the eigenfunctions and eigenvalues of $ V $. The fact that products of Koopman eigenfunctions are Koopman eigenfunctions leads to the  following result about products of elements of $\Disc$ with elements of $\Disc^\bot$. 
\begin{lem}\label{lem:cont_spect_ideal}
    Let $\Phi^t$ be an ergodic flow on a probability space $(X,\mu)$ such that $U^t$ has a mixed spectrum. Then, for every $ f\in\Disc$ and $ g\in\Disc^\perp$ for which $ fg \in L^2( X, \mu )$, $fg $ lies in $\Disc^\bot$.
\end{lem}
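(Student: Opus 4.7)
The plan is to show $fg \perp \Disc$ by testing against Koopman eigenfunctions, exploiting two features recorded just before the lemma: that eigenfunctions form a group under multiplication, and that normalized eigenfunctions are unimodular $\mu$-a.e. Since $\Disc$ is the closed linear span of eigenfunctions of $U^t$ and $\langle \cdot, fg\rangle$ is a bounded linear functional on $L^2(X,\mu)$ under the hypothesis $fg\in L^2$, it suffices to verify $\langle z, fg\rangle = 0$ for every eigenfunction $z$ of $U^t$; the extension to general $h\in\Disc$ then follows by linearity and continuity of the inner product.

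Fix a normalized eigenfunction $z$, so that $|z|=1$ $\mu$-a.e., and perform the reformulation
\[\langle z, fg\rangle = \int_X (\bar{z}f)\, g\, d\mu = \langle \bar{f}z, g\rangle,\]
in which both inner products are legitimate: the left side by the hypothesis $fg\in L^2$, and the right side because the pointwise bound $|\bar{f}z|=|f|$ places $\bar{f}z$ in $L^2$ with $\|\bar{f}z\|=\|f\|$. The task thereby reduces to showing $\bar{f}z\in\Disc$, at which point $g\in\Disc^\perp$ immediately yields the desired vanishing.

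To prove $\bar{f}z\in\Disc$, I would first dispose of the case where $f=w$ is itself an eigenfunction: since $U^t w = e^{i\nu t}w$ implies $U^t\bar{w}=e^{-i\nu t}\bar{w}$, the conjugate $\bar{w}$ is again an eigenfunction, and the group property noted above gives that $\bar{w}z$ is an eigenfunction at frequency $\omega-\nu$, hence in $\Disc$. For a general $f\in\Disc$, pick a sequence $f_n\to f$ in $L^2$ with each $f_n$ a finite linear combination of eigenfunctions; by the previous case and linearity $\bar{f}_n z\in\Disc$, and $\|\bar{f}_n z-\bar{f}z\|=\|f_n-f\|\to 0$ because $|z|=1$ almost everywhere. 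Closedness of $\Disc$ then gives $\bar{f}z\in\Disc$, completing the argument.

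The only point that requires care is the asymmetric role of $f$ and $g$: the hypothesis $fg\in L^2$ is what makes $\langle z, fg\rangle$ a bona fide $L^2$ pairing, while the unimodularity $|z|=1$ is what allows the multiplication to be transferred across the inner product onto the $f$ side without any extra integrability assumption on either factor individually. Apart from this bookkeeping, no mixed-spectrum hypothesis or additional structure of the flow beyond ergodicity enters the proof.
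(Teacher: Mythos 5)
Your proof is correct, and it uses precisely the idea the paper points to as its route (the lead-in remarks that products of eigenfunctions are eigenfunctions, and that normalized eigenfunctions are unimodular a.e.): you reduce testing against $\Disc$ to testing against a single eigenfunction $z$, transfer $\bar f z$ across the pairing using $|z|=1$ a.e., and show $\bar f z\in\Disc$ by first handling eigenfunctions (via the group and conjugation-closure properties) and then passing to the closure. The paper itself gives no explicit proof of this lemma, but your argument is exactly what its preamble suggests, and the observation that only ergodicity (not the mixed-spectrum hypothesis) is actually used is also correct.
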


The eigenvalues of $V$ are closed under integer linear combinations. \blue{Moreover, if all corresponding eigenfunctions are continuous, they} are generated by a finite set of rationally independent eigenvalues $i\omega_1,\ldots,i\omega_{m}$. That is, every eigenvalue of $V$ is simple, and has the form $\omega_{\vec a} = \sum_{j=1}^m a_j \omega_j$ for some \blue{$ \vec a = (a_1,\ldots,a_m)\in\integer^m$}. Moreover, the corresponding eigenfunction is given by 
\begin{equation}\label{eqEigProd}
z_{\vec a} = \prod_{j=1}^m z_1^{a_1} \cdots z_m^{a_m}, 
\end{equation}
where $ z_j $ is the eigenfunction at eigenvalue $ i \omega_j $. By virtue of~\eqref{eqEigProd} the evolution of every observable $f \in \mathcal{D}$ under $U^t$ has the closed-form expression
    \begin{equation}
        \label{eqClosedForm}
        U^tf = \sum_{\vec a \in \mathbb{Z}^m} \hat f_{\vec a} e^{i \omega_{\vec a} t} z_{\vec a}, \quad \hat f_{\vec a} = \langle z_{\vec a}, f \rangle,
    \end{equation}
which can be evaluated given knowledge of finitely many generating eigenfunctions and eigenfrequencies. The following is a generalization of Property~2 of Koopman eigenfunctions listed above.
\begin{prop} \label{prop:semi_conj_pi}
Given an arbitrary collection $ \{ z_{{\vec a}_1}, z_{{\vec a}_2}, \ldots, z_{{\vec a}_l}\} $ of $l$ Koopman eigenfunctions, there exists a map $ \pi : X \to \mathbb{ C }^l $ with 
\begin{displaymath}
\pi(x ) = ( z_{{\vec a}_1}( x ), \ldots, z_{{\vec a}_l}( x ) ), \quad \text{for $\mu $-a.e.\ $ x \in X $}, 
\end{displaymath}
such that: 
\begin{enumerate}[(i)]
\item The image $ \pi( X ) $ is a torus of dimension $ D \leq \min\{ m, l \} $, with $ D = l $ if $ \omega_{{\vec a}_1}, \ldots, \omega_{{\vec a}_l} $ are rationally independent.
\item The flow $ (\Phi^t, \mu ) $ on $ X $ is semi-conjugate to an ergodic rotation $ (\Omega^t, \Leb ) $ on $ \mathbb{ T }^D $ (i.e., $ \pi \circ \Phi^t = \Omega^t \circ \pi $, $ \mu $-a.e.) associated with a frequency vector whose components are a subset of $ \{ \omega_{{\vec a_1}}, \ldots, \omega_{{\vec a_l}} \} $. 
\item Every Koopman eigenfunction $ z $ whose corresponding eigenfrequency is a linear combination of the $ \omega_{{\vec a}_1}, \ldots, \omega_{{\vec a}_l} $ satisfies $ z( x ) = \zeta(\pi(x ) ) $ for $ \mu $-a.e.\ $ x \in X $, where $\zeta \in C^\infty( \mathbb{ T}^D ) $ is a smooth Koopman eigenfunction of the ergodic rotation on the $ D $-torus corresponding to the same eigenfrequency.
\end{enumerate} 
\end{prop}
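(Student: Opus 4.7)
The plan is to identify the image $\pi(X)$ as the orbit closure of a linear flow on $\TorusD{l}$, and to exploit the multiplicative group structure of Koopman eigenfunctions to obtain (iii). First, normalize each $z_{\vec a_j}$ so that $\lvert z_{\vec a_j}(x)\rvert = 1$ for $\mu$-a.e.\ $x$ (Property~3 of Koopman eigenfunctions above), so that $\pi$ takes values in $\TorusD{l}$ up to a $\mu$-null set. Applying~\eqref{eqn:Def_koop_eigen} componentwise yields, for $\mu$-a.e.\ $x$,
\begin{equation*}
\pi(\Phi^t(x)) = R_{\vec\omega t}(\pi(x)), \qquad \vec\omega := (\omega_{\vec a_1}, \ldots, \omega_{\vec a_l}),
\end{equation*}
where $R_{\vec v} : \TorusD{l} \to \TorusD{l}$ denotes translation by $\vec v$; this is the $\mu$-a.e.\ intertwining relation underlying (ii).

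For (i), I would invoke the classical structure theorem for closed subgroups of $\TorusD{l}$: the closure $T := \overline{\{R_{\vec\omega t}(0) : t \in \real\}}$ is a connected closed subgroup, hence a subtorus isomorphic to $\TorusD{D}$, with $D$ equal to the $\mathbb{Q}$-rank of $\omega_{\vec a_1}, \ldots, \omega_{\vec a_l}$ inside $\real$. Since every $\omega_{\vec a_j}$ is a $\mathbb{Z}$-linear combination of the $m$ rationally independent eigenfrequencies $\omega_1, \ldots, \omega_m$ identified in the paragraph preceding the proposition, this rank is bounded by $\min\{m, l\}$, with equality to $l$ precisely when the $\omega_{\vec a_j}$ themselves are $\mathbb{Q}$-independent. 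Selecting a maximal $\mathbb{Q}$-independent subset of size $D$ among the $\omega_{\vec a_j}$'s yields a coordinate projection $\TorusD{l} \to \TorusD{D}$ whose restriction to $T$ is a topological group isomorphism, under which the flow inherited by $T$ from $R_{\vec\omega t}$ becomes the ergodic rotation $\Omega^t$ on $\TorusD{D}$ with frequency vector equal to the chosen $D$ eigenfrequencies. The push-forward $\pi_*\mu$ is $R_{\vec\omega t}$-invariant and ergodic (inheriting ergodicity from $\mu$), so by uniqueness of invariant measures for ergodic rotations it is normalized Haar measure supported on a single coset of $T$, which after a translation is identified with $T \cong \TorusD{D}$. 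This yields (i) and (ii).

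For (iii), let $z$ be a Koopman eigenfunction at eigenfrequency $\omega = \sum_{j=1}^l b_j \omega_{\vec a_j}$ with $b_j \in \mathbb{Z}$. By~\eqref{eqEigProd} and ergodicity, the product $z \prod_{j=1}^l z_{\vec a_j}^{-b_j}$ is a zero-frequency eigenfunction and hence equal to some constant $c$ $\mu$-a.e.\ Thus
\begin{equation*}
z(x) = c \prod_{j=1}^l z_{\vec a_j}(x)^{b_j} = \zeta(\pi(x)) \qquad \text{for $\mu$-a.e.\ $x$},
\end{equation*}
where $\zeta(\xi_1, \ldots, \xi_l) = c\, \xi_1^{b_1} \cdots \xi_l^{b_l}$ is a smooth character of $\TorusD{l}$; restricting $\zeta$ to $T$ (under its identification with $\TorusD{D}$) produces a smooth character of $\TorusD{D}$ that, by construction, is a Koopman eigenfunction of $\Omega^t$ at eigenfrequency $\omega$.

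The main obstacle I anticipate is the passage from the $\mu$-a.e.\ intertwining $\pi \circ \Phi^t = R_{\vec\omega t} \circ \pi$ to a genuine semi-conjugacy onto a concrete $D$-torus: the image $\pi(X)$ is only well-defined modulo $\mu$-null sets, so one must show via ergodicity of $\pi_*\mu$ under the linear flow that its essential support is exactly one coset of $T$, rather than a union of cosets or a proper subset. A secondary point requiring care is verifying that a $\mathbb{Q}$-independent subset of $D$ frequencies actually provides a valid coordinate system on $T$, i.e., that the corresponding coordinate projection $\TorusD{l} \to \TorusD{D}$ restricts to an isomorphism on $T$; this should follow from a Lie-algebra comparison given that $\dim T = D$, but deserves explicit verification before the frequency vector can be identified with the chosen eigenfrequencies.
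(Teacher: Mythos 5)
Your overall strategy is the natural one---identify $\pi(X)$ with the orbit closure of a linear flow on $\TorusD{l}$, invoke the structure theorem for closed connected subgroups to get a subtorus $T$ of dimension equal to the $\mathbb{Q}$-rank of the frequencies, use ergodicity of $\pi_*\mu$ to conclude it is Haar measure on one coset of $T$, and express eigenfunctions as restrictions of characters of $\TorusD{l}$. The paper itself gives no proof of this proposition, so I am reviewing your argument on its own terms.

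The point you flag as ``secondary'' is in fact the crux, and your proposed Lie-algebra resolution does not close it. A surjective continuous homomorphism between compact connected abelian Lie groups of the same dimension need not be an isomorphism: it only has finite kernel, i.e., it is a covering map. Concretely, take $m=1$ with generating eigenfrequency $\omega_1=1$, $l=2$, $\vec a_1 = (2)$, $\vec a_2 = (3)$, so $\omega_{\vec a_1}=2$, $\omega_{\vec a_2}=3$, and $z_{\vec a_1}=z_1^2$, $z_{\vec a_2}=z_1^3$. Here $D=1$ and $T$ is the closed curve $t\mapsto(2t,3t)\bmod 1$ in $\TorusD{2}$. Projecting onto the first coordinate restricts on $T$ to $t\mapsto 2t$, a two-to-one cover; projecting onto the second gives a three-to-one cover. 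No coordinate projection restricts to an isomorphism, so the mechanism by which you identify $T\cong\TorusD{D}$ and read off the frequency vector breaks down.

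Worth noting also that this gap is not easily repaired within the statement's own constraints, and probably reflects looseness in the proposition as written. If you instead identify $T\cong\TorusD{D}$ via a $\mathbb{Z}$-basis of $T$'s lattice (the natural parameterization $t\mapsto(2t,3t)$ in the example), you get a genuine isomorphism, but the induced rotation frequency is $1$, which is not in $\{2,3\}$, contradicting the final clause of (ii). Conversely, if you accept the non-injective coordinate projection as the semi-conjugacy (semi-conjugacies need not be injective) you recover (ii), but then (iii) fails: $\omega=1=\omega_{\vec a_2}-\omega_{\vec a_1}$ is an eligible eigenfrequency in (iii), yet no character of the frequency-$2$ rotation on $\TorusD{1}$ has eigenfrequency $1$. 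A clean statement would describe the target rotation's frequency vector as a $\mathbb{Z}$-module basis of the lattice generated by $\{\omega_{\vec a_1},\ldots,\omega_{\vec a_l}\}$ rather than a subset of those frequencies; with that reading, your argument via the lattice parameterization of $T$ goes through and also makes (iii) immediate, since every $\zeta_0|_T$ descends to a character of $\TorusD{D}$ under that identification.

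The remaining pieces of your argument are fine: ergodicity of $\pi_*\mu$ does follow from ergodicity of $\mu$ and the $\mu$-a.e.\ intertwining, and the classification of ergodic invariant measures for the linear flow on $\TorusD{l}$ as Haar measures on cosets of $T$ is standard. Your derivation of (iii) from~\eqref{eqEigProd} and the fact that zero-frequency eigenfunctions are constant (by ergodicity) is correct; the only caveat is, again, that the resulting $\zeta$ is naturally a character of $T$, so the identification of $T$ with $\TorusD{D}$ must be chosen carefully and consistently with whatever you settle on for (ii).
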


\begin{rk*}
If $m>1$, the set of eigenvalues $ \{ i\omega_{\vec a} \}_{\vec a \in \mathbb{ Z}^m }$ is dense on the imaginary axis. This property adversely affects the stability of numerical approximations of Koopman eigenvalues and eigenfunctions even in systems with pure point spectrum, necessitating the use of regularization \cite{Giannakis17}. We will return to this point in Section~\ref{sect:Galerkin}.
\end{rk*}

\begin{lem}[\cite{FourierDyn}, Section 2.3]\label{lem:erg_Fouri}
Let $ \Delta t > 0 $ be as in Assumption~\ref{asmptn:standing}. Then, the orthogonal projection $\pi_\omega f $ of \blue{an observable $f \in L^2(X,\mu)$} onto the eigenspace of $ U^{\Delta t}$ corresponding to the eigenvalue $ e^{i \omega\, \Delta t}$ of $U^{\Delta t}$ is given by
\begin{displaymath}
\pi_\omega f = \lim_{N\to\infty}\frac{1}{N}\sum_{n=0}^{N-1} e^{-i\omega n\, \Delta t} U^{n\, \Delta t}f. 
\end{displaymath}
Moreover, $\pi_\omega\equiv 0$ if \blue{$i\omega$ is not an eigenvalue of the generator}. Otherwise, $U^{\Delta t}\pi_\omega f=e^{i\omega \, \Delta t} \pi_\omega f$.
\end{lem}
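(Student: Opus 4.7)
The plan is to apply von Neumann's mean ergodic theorem to the unitary operator $W := e^{-i\omega\,\Delta t}\, U^{\Delta t}$ on $L^2(X,\mu)$. This $W$ is unitary because $U^{\Delta t}$ is unitary (the measure-preserving nature of $\Phi^{\Delta t}$), and multiplication by the unit-modulus scalar $e^{-i\omega\,\Delta t}$ preserves unitarity. The mean ergodic theorem then asserts that the Cesaro averages $\frac{1}{N}\sum_{n=0}^{N-1} W^n f$ converge in $L^2(X,\mu)$ norm to the orthogonal projection of $f$ onto $\ker(I-W)$. The identities $W^n = e^{-i\omega n\,\Delta t}\, U^{n\,\Delta t}$ and $\ker(I-W) = \{g \in L^2(X,\mu) : U^{\Delta t} g = e^{i\omega\,\Delta t} g\}$ immediately match this limit with the claimed formula for $\pi_\omega f$ and produce the invariance relation $U^{\Delta t}\pi_\omega f = e^{i\omega\,\Delta t}\pi_\omega f$ by construction.

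For the vanishing statement, I would argue contrapositively: assuming $\pi_\omega \not\equiv 0$, there exists $z\neq 0$ in $L^2(X,\mu)$ with $U^{\Delta t} z = e^{i\omega\,\Delta t} z$, and it suffices to recover from $z$ a $V$-eigenfunction at $i\omega$. Using the spectral resolution $V = \int i\lambda\,dE(\lambda)$ afforded by Stone's theorem, the condition $U^{\Delta t} z = e^{i\omega\,\Delta t} z$ forces the scalar spectral measure $\mu_z(A) = \langle z,E(A)z\rangle$ to be supported on the countable arithmetic progression $\Lambda := \{\omega + 2\pi k/\Delta t : k \in \integer\}$. Since $\lVert z\rVert^2 = \mu_z(\real) > 0$ and the support is countable, $\mu_z$ must charge an atom at some point of $\Lambda$, producing a nonzero $V$-eigenfunction at some frequency in $\Lambda$. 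To force that frequency to be $\omega$ itself, I would invoke ergodicity of $\Phi^{\Delta t}$ from Assumption~\ref{asmptn:standing}: if there existed two distinct $V$-eigenfrequencies $\omega + 2\pi k_j/\Delta t$ in $\Lambda$ with eigenfunctions $z_j$ ($j=1,2$), the group property of Koopman eigenfunctions (cf.~\eqref{eqEigProd}) would make $z_1\overline{z_2}$ a $V$-eigenfunction at the nonzero frequency $i\cdot 2\pi(k_1-k_2)/\Delta t$, hence a nonconstant function fixed by $U^{\Delta t}$ since $e^{i2\pi(k_1-k_2)} = 1$, contradicting ergodicity of $\Phi^{\Delta t}$.

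The main technical step is the spectral-theoretic reduction from an eigenfunction of $U^{\Delta t}$ to one of the generator $V$; everything else is short calculation. The no-aliasing argument in the last step is essentially the same algebra used earlier in the paper to derive~\eqref{eqEigProd} and the simplicity of generator eigenvalues, so once the operator $W$ has been correctly identified and the mean ergodic theorem invoked, no further obstacles arise.
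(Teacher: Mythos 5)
Your proof of the projection formula and of the invariance relation $U^{\Delta t}\pi_\omega f = e^{i\omega\,\Delta t}\pi_\omega f$ is correct: applying the von Neumann mean ergodic theorem to the unitary $W = e^{-i\omega\,\Delta t}U^{\Delta t}$ is the standard route, and the paper itself gives no proof of Lemma~\ref{lem:erg_Fouri} (it cites \cite{FourierDyn}), so there is nothing internal to compare against on this portion. The spectral-measure reduction for the vanishing claim is also sound up to the final step: the condition $U^{\Delta t}z = e^{i\omega\,\Delta t}z$ does force $\mu_z$ onto the countable set $\Lambda = \{\omega + 2\pi k/\Delta t : k \in \integer\}$, hence an atom, hence a $V$-eigenfunction at some frequency in $\Lambda$; and ergodicity of $\Phi^{\Delta t}$ does preclude two distinct elements of $\Lambda$ from both being $V$-eigenfrequencies, by the product argument you give.

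The gap is in the last sentence, where you claim ``no further obstacles arise.'' Your ergodicity argument shows that \emph{at most one} element of $\Lambda$ can be a $V$-eigenfrequency, but nothing forces that unique element to be $\omega$ itself rather than $\omega + 2\pi k_0/\Delta t$ for some $k_0 \neq 0$. Indeed, the statement as written fails under aliasing: take $X = S^1 = \real/2\pi\integer$ with $\Phi^t(\theta) = \theta + t$, Lebesgue measure, and $\Delta t = 1$ (which is ergodic since $1/2\pi$ is irrational). The generator is $V = d/d\theta$ with $\spec(V) = \{ik : k\in\integer\}$. Now set $\omega = 1 + 2\pi$. Then $e^{i\omega\,\Delta t} = e^{i}$, which is the $U^{\Delta t}$-eigenvalue of $e^{i\theta}$, so $\pi_\omega$ projects onto $\spn\{e^{i\theta}\} \neq \{0\}$; yet $i\omega = i(1+2\pi)$ is \emph{not} an eigenvalue of $V$. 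So $\pi_\omega \not\equiv 0$ while $i\omega \notin \spec(V)$, contradicting the lemma as stated. The correct formulation is ``$\pi_\omega \equiv 0$ iff $e^{i\omega\,\Delta t}$ is not an eigenvalue of $U^{\Delta t}$,'' equivalently ``iff no $i(\omega + 2\pi k/\Delta t)$, $k\in\integer$, is an eigenvalue of $V$.'' The paper's one use of this lemma (in the proof of Lemma~\ref{lem:Decor}) always takes $\omega$ to be a genuine eigenfrequency associated to a $U^{\Delta t}$-eigenfunction $g\in\Disc$, so the imprecision is harmless there, but the vanishing claim in the form quoted cannot be established, and your final step cannot be completed.
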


\paragraph{Mixing and weak mixing.} An observable $f\in L^2(X,\mu)$ is said to be mixing if for all $ g\in L^2(X,\mu)$, $\lim_{t\to\infty} \langle g, U^t f \rangle=0$; it is said to be weak-mixing if $\lim_{t\to \infty} t^{-1} \int_0^T \lvert \langle g, U^s f \rangle \rvert \, ds = 0$. The latter, is equivalent to the requirement that for Lebesgue almost every $\Delta t\in\real$, $\lim_{N\to\infty}N^{-1}\sum_{n=0}^{N-1} \lvert \langle g, U^{n\, \Delta t}f \rangle \rvert = 0$. The flow $\Phi^t$ is said to be (weak-) mixing if $f$ is (weak-) mixing for all $ f\in L^2(X,\mu)$. It is known that every $f\in\Disc^\bot$ is weak mixing (see, e.g., Mixing Theorem, p.~45 in \cite{Halmos1956}), whereas no observable in $ \Disc $ is weak-mixing. Thus, the component $\Disc$, often called the quasiperiodic subspace, shows no decay of correlation, unlike its complement $\Disc^\bot$, which represents the chaotic component of the dynamics. In addition, weak-mixing observables in $ \Disc^\perp $ and observables in $ \Disc $ have a useful pointwise decorrelation property:
\begin{lem}\label{lem:Decor}
Let $ f \in \Disc^\perp $ and $ g \in \Disc $. Then, for $\mu$-a.e.\ $x,y\in X$, 
\[\lim_{N\to\infty} \frac{ 1 }{ N } \sum_{n=0}^{N-1} g^*( \Phi^{n \, \Delta t}(x ) ) f( \Phi^{n \, \Delta t}(y ) ) = 0. \]
\end{lem}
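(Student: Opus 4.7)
My plan is to view the expression as a time average under the product flow $\Phi^{\Delta t}\times\Phi^{\Delta t}$ on $(X\times X,\mu\times\mu)$ and evaluate its limit via the spectral projection onto the invariant $L^2$-subspace of the product system. First, I rewrite
\[
S_N(x,y) := \frac{1}{N}\sum_{n=0}^{N-1} g^*(\Phi^{n\Delta t}(x))\, f(\Phi^{n\Delta t}(y)) \;=\; \frac{1}{N}\sum_{n=0}^{N-1}(U^{n\Delta t}\otimes U^{n\Delta t})(g^*\otimes f)(x,y),
\]
noting that $g^*\otimes f\in L^2(X\times X,\mu\times\mu)$ by Fubini (since $g,f\in L^2(X,\mu)$), and that $\Phi^{\Delta t}\times\Phi^{\Delta t}$ preserves $\mu\times\mu$.

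Next, I apply Birkhoff's ergodic theorem to the (possibly non-ergodic) measure-preserving system $(X\times X,\Phi^{\Delta t}\times\Phi^{\Delta t},\mu\times\mu)$: the averages $S_N$ converge $\mu\times\mu$-a.e.\ and in $L^1$ to the conditional expectation $\mathbb{E}[g^*\otimes f\mid\mathcal{I}]$, where $\mathcal{I}$ is the invariant $\sigma$-algebra of the product map. For $L^2$ integrands this conditional expectation coincides with the orthogonal projection $P_{\mathrm{inv}}$ onto the closed $U^{\Delta t}\otimes U^{\Delta t}$-invariant subspace of $L^2(X\times X,\mu\times\mu)$, and by the von Neumann mean ergodic theorem we also have $S_N\to P_{\mathrm{inv}}(g^*\otimes f)$ in $L^2(\mu\times\mu)$. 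It therefore suffices to show $P_{\mathrm{inv}}(g^*\otimes f)=0$.

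For this, I expand $g=\sum_k c_k z_k$ in an $L^2$-orthonormal basis of Koopman eigenfunctions $\{z_k\}$ of $U^t$ with eigenfrequencies $\omega_k$, so that $g^*=\sum_k \overline{c_k}\,\bar z_k$, where each $\bar z_k$ is itself a Koopman eigenfunction satisfying $U^{n\Delta t}\bar z_k = e^{-i\omega_k n\Delta t}\bar z_k$. By $L^2$-continuity of $P_{\mathrm{inv}}$, it is enough to verify $P_{\mathrm{inv}}(\bar z_k\otimes f)=0$ for every $k$. Since
\[
\frac{1}{N}\sum_{n=0}^{N-1}(U^{n\Delta t}\otimes U^{n\Delta t})(\bar z_k\otimes f)(x,y) \;=\; \bar z_k(x)\cdot\frac{1}{N}\sum_{n=0}^{N-1} e^{-i\omega_k n\Delta t}(U^{n\Delta t}f)(y),
\]
Lemma~\ref{lem:erg_Fouri} identifies the $L^2$-limit of the right-hand side as $\bar z_k(x)\,\pi_{\omega_k}f(y)$, which vanishes because $f\in\Disc^\perp$ is orthogonal to every Koopman eigenspace. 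Combining with the preceding step, $\lim_{N\to\infty} S_N(x,y)=0$ for $\mu\times\mu$-a.e.\ $(x,y)$, which by Fubini gives the stated $\mu$-a.e.\ $x,y\in X$ claim.

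The step I expect to be the main obstacle is the identification $P_{\mathrm{inv}}(g^*\otimes f)=0$ via the eigenfunction expansion of $g$: the finite-sum case is immediate from Lemma~\ref{lem:erg_Fouri}, but passing to the infinite series requires care in interchanging the projection with the $L^2$-limit. This is a standard boundedness argument (justified by $\lVert P_{\mathrm{inv}}\rVert\leq 1$ and monotone tail control), but it is the place where the proof must be written out cleanly; everything else in the argument reduces to invoking Birkhoff's and the mean ergodic theorem for the product system.
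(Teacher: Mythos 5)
Your proof is correct and rests on the same computational core as the paper's: expanding $g$ in Koopman eigenfunctions, pulling the phase factor $e^{-i\omega_k n\,\Delta t}$ and the coefficient $\bar z_k(x)$ out of the average, and invoking Lemma~\ref{lem:erg_Fouri} together with $f\in\Disc^\perp$ to kill each term. What you do differently is wrap this in the Birkhoff/conditional-expectation framework for the product flow $\Phi^{\Delta t}\times\Phi^{\Delta t}$ on $(X\times X,\mu\times\mu)$, and this buys genuine rigor in two places where the paper's proof is terse. First, the paper applies Lemma~\ref{lem:erg_Fouri} as if it gave a pointwise limit, whereas that lemma (von Neumann/Wiener mean ergodic theorem) literally gives only an $L^2$ limit; your route gets $\mu\times\mu$-a.e.\ convergence directly from Birkhoff applied to $g^*\otimes f\in L^1(X\times X)$, and then only needs to \emph{identify} the a.e.\ limit with the $L^2$ projection $P_{\mathrm{inv}}(g^*\otimes f)$, for which the $L^2$ statement of Lemma~\ref{lem:erg_Fouri} suffices. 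Second, the paper's ``without loss of generality $g$ is an eigenfunction'' quietly requires passing from a dense span to all of $\Disc$, i.e.\ an interchange of $\lim_N$ with the series in $g$; your argument replaces that with the observation that $P_{\mathrm{inv}}$ is a bounded operator on $L^2(X\times X)$ and $g_M^*\otimes f\to g^*\otimes f$ in $L^2$, which is airtight. So this is the same idea, written more carefully; the one small thing worth flagging is that you don't actually need the separate von Neumann invocation alongside Birkhoff — Birkhoff already identifies the a.e.\ limit with the conditional expectation, which for $L^2$ integrands coincides with $P_{\mathrm{inv}}$, so the $L^2$ computation of $P_{\mathrm{inv}}(\bar z_k\otimes f)=\bar z_k\otimes\pi_{\omega_k}f=0$ is all that is needed.
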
 
\begin{proof} 
Without loss of generality, we may assume that $ g $ is an eigenfunction of $U^{\Delta t}$ with eigenvalue $e^{i \omega \, \Delta t}$. Then, 
\begin{multline*}
\lim_{N\to\infty} \frac{1}{N} \sum_{n=0}^{N-1} g^*( \Phi^{n \, \Delta t}(x )) f( \Phi^{n\, \Delta t}(y)) \\= g^*(x) \lim_{N\to\infty} \frac{1}{N}\sum_{n=0}^{N-1} e^{-in\omega \, \Delta t} f( \Phi^{n\, \Delta t}(y)),
\end{multline*}
which is equal to $g^*(x)\pi_{\omega} f (y)$ by Lemma~\ref{lem:erg_Fouri}. The latter is equal to zero since $f\in\Disc^\bot$. \qed
\end{proof}

\section{Kernel integral operators from delay-coordinate mapped data}\label{sect:kernel}

\subsection{\label{secKernelChoice}Choice of kernel}
Consider a kernel integral operator of the class~\eqref{eqn:KOp} associated with an $L^2$ kernel $k:M \times M\to\real$. Then, the following properties hold (e.g., \cite{EigenIntOp1,EigenIntOp2}):
\begin{enumerate}
\item $K$ is a Hilbert-Schmidt, and therefore compact, operator on $L^2( X, \mu ) $, with operator norm bounded by $ \lVert k \rVert_{L^2(X\times X)} $.
\item If $k$ is symmetric, then $K$ is self-adjoint. 
\item If $k$ is $C^0$, then $Kf$ is also $C^0$ for every $ f\in L^2(X,\mu)$.
\item If $M$ is a $C^r$ manifold and $k$ is $C^r$, then $Kf$ is also $C^r$ for every $f\in L^2(X,\mu)$.
\end{enumerate}

As stated in Section~\ref{secMainResults}, we will work with kernels of the form 
\begin{equation}\label{eqKQ2}
k_Q( x, y ) = h( d_Q( x , y ) ), 
\end{equation}
where $ h $ is a continuous shape function on $ \real $, and $ d_Q : M\times M \to \real_0 $ is the distance-like function on $ M$ from~\eqref{eqn:def:dQ}, parameterized by the number of delays $Q$. \blue{Kernels of this class are sometimes referred to as stationary kernels \cite{Genton01}, as they only depend on distances between datapoints.} For example, in \eqref{eqn:def:kQ}, we used a Gaussian shape function, which is popular in manifold learning and other related geometrical data analysis techniques. Note that $d_Q$ is symmetric, non-negative, and satisfies the triangle inequality, but depending on the properties of $ F $ and the number of delays it may vanish on distinct points. That is, $ d_Q $ is a pseudo-distance on $ M $, induced from delay-coordinate mapped data with $ Q $ delays. 

\blue{The kernels in~\eqref{eqKQ2} satisfy Assumption \ref{asmptn:ker}(i), and the associated kernel integral operators $K_{Q}$ have all four properties listed above. In addition, if $h$ is strictly positive, $k_Q$ satisfies Assumption~\ref{asmptn:ker}(ii). The behavior of integral operators associated with other classes of kernels, e.g., the covariance operators employed in SSA and Hankel matrix analysis induced by inner products in data space, can be studied via similar techniques to those presented below. However, it should be kept in mind that the Markov normalization procedure described in Section~\ref{secMarkov} (which will be important for the well-posedness of the Galerkin schemes in Sections~\ref{sect:Galerkin} and~\ref{sect:numerics}) requires that the kernel be sign-definite. Another consideration to keep in mind is that the ability to approximate Koopman eigenfunctions with our techniques depends on the ``richness'' of the range of $K_Q$. As can be readily verified, the operator $K_Q$ constructed from covariance kernels in $d$-dimensional data space (as in Assumption~\ref{asmptn:standing}) has at most a $dQ$-dimensional range, whereas the corresponding operators associated with Gaussian kernels, as well as other non-polynomial kernels, have typically infinite-dimensional range for any $Q$. Our approach should also be applicable with little modification to families of kernels of the form
\begin{displaymath}
\tilde k_Q(x,y) = \frac{1}{Q} \sum_{q=0}^{Q-1} h(d_1(\Phi^{q\,\Delta t}(x),\Phi^{q\,\Delta t}(y))),
\end{displaymath}
where averaging takes place after application of the shape function.} Lemma \ref{lem:Rho} below states some useful properties of $K_{Q}$ associated with strictly positive kernels. In what follows, $1_S$ will denote the constant function equal to 1 on a set $S$. 

\begin{lem} \label{lem:Rho} 
\blue{Under Assumptions~\ref{asmptn:standing} and~\ref{asmptn:ker}(ii)}, for any $ Q \in \num$, the functions $ \rho_{Q} = K_{Q} 1_X $, and $ \sigma_{Q} =K_{Q} \left( 1/\rho_{Q}\right) $ are continuous and positive. Moreover, restricted on $ X $, they are bounded away from zero.
\end{lem}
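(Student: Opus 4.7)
The plan is to verify the claimed properties for $\rho_Q = K_Q 1_X$ first, then bootstrap from these to handle $\sigma_Q = K_Q(1/\rho_Q)$. To start, I would observe that under Assumption~\ref{asmptn:standing} the continuity of $F$ and of the flow maps $\Phi^{q\,\Delta t}$ makes the delay distance $d_Q$ in \eqref{eqn:def:dQ} continuous on $M \times M$, so $k_Q = h \circ d_Q$ is continuous as well. Fixing any $x_0 \in M$ and choosing a compact neighborhood $\bar U \ni x_0$, the continuous function $k_Q$ is bounded on the compact set $\bar U \times X$ by some constant $C$. Since $\mu$ is a probability measure, $C$ is integrable over $X$, so a dominated-convergence argument applied to a sequence $x_n \to x_0$ in $\bar U$ yields continuity of $\rho_Q(x) = \int_X k_Q(x,y) \, d\mu(y)$ at $x_0$.

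For positivity, Assumption~\ref{asmptn:ker}(ii) gives $k_Q > 0$ everywhere; since $k_Q(x,\cdot)$ is continuous and strictly positive on the compact set $X$, it attains a positive minimum there, and therefore $\rho_Q(x) > 0$ for every $x \in M$. The uniform lower bound on $X$ is then immediate: $\rho_Q$ is continuous and strictly positive on the compact set $X$, hence attains its infimum, which is positive.

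It follows that $1/\rho_Q$ is well defined, continuous, and bounded on $X$, so in particular $1/\rho_Q \in L^\infty(X,\mu) \subset L^2(X,\mu)$, and the integrand $k_Q(x,y)/\rho_Q(y)$ defining $\sigma_Q(x) = \int_X k_Q(x,y)/\rho_Q(y)\,d\mu(y)$ is jointly continuous, strictly positive, and locally bounded on $M \times X$. Repeating the same dominated-convergence and compactness arguments as above gives continuity of $\sigma_Q$ on $M$, pointwise positivity on $M$, and a positive lower bound on the compact set $X$. The proof is essentially routine; the only step that needs any care is ensuring the dominating constant on $\bar U \times X$ exists, which is exactly where compactness of $X$ (together with the compact neighborhood in $M$) is used, so there is no genuine obstacle to overcome.
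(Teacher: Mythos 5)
Your argument is correct and is essentially a detailed unpacking of the paper's one-line proof, which simply invokes compactness of $X$ and the fact that $k_Q\rvert_{X\times X}$ is continuous and bounded away from zero. The ingredients you use — continuity of $k_Q$, dominated convergence on a compact neighborhood, and compactness of $X$ to get positive lower bounds — are exactly the ones the paper has in mind.
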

\begin{proof}
The claims follow directly by compactness of $X$ and the fact that  $ k_Q \rvert_{X\times X} $ is a continuous function, bounded away from zero. \qed
\end{proof}

Intuitively, $ \rho_{Q} $ can be thought of as a ``sampling density'' on $ X $. For instance, if $ X $ were a manifold embedded in $ \real^{Qd} $ by a delay-coordinate map constructed from $ F $, then up to an $ \epsilon $-dependent scaling, $ \rho_{Q} $ would approximate the density of the invariant measure $ \mu $ relative to the volume measure associated with that embedding. 

\begin{rk*}In a number of applications, such as statistical learning on manifolds \cite{BelkinNiyogi03,CoifmanLafon06,BerrySauer16b,BerryHarlim16}, one-parameter families of integral operators such as $ K_{Q} $ and $ P_{Q} $ are studied in the limit $ \epsilon \to 0 $, where under certain conditions they can be used to approximate generators of Markov semigroups; one of the primary examples being the Laplace-Beltrami operator on Riemannian manifolds. Here, the fact that the state space $ X $ may not (and in general, will not) be smooth precludes us from taking such limits unconditionally. However, according to Theorem~\ref{thm:B}(ii), passing first to the limit $ Q \to \infty $ allows one to view $ K $ and $ P$ as operators on functions on a smooth manifold, namely a $ D $-dimensional torus, and study the small-$\epsilon $ behavior of these operators in that setting. 
\end{rk*}

\subsection{Asymptotic behavior in the infinite-delay limit}

To study the behavior of $ K_{Q} $ in the limit of infinitely many delays, $ Q \to \infty $, we first consider the properties of the pseudometric $d_{Q}$ in the same limit. The latter can be studied in turn through a useful (nonlinear) map $ \Psi : C^0(X) \to L^\infty( X \times X, \mu \times \mu ) $, which maps a given observation function $F $ into a (pseudo)metric on $X$, namely, 
\begin{equation}\label{eqn:def:inf_delays}
\begin{gathered}
\InfDelay(F)(x,y):= \lim_{Q\to\infty} \InfDelay_Q(F)(x,y), \\ 
\InfDelay_Q(F)(x,y) := \frac{ 1 }{Q} \sum_{q=0}^{Q-1} \left\lVert F( \Phi^{q \, \Delta t } ( x ) )-F( \Phi^{q \, \Delta t }(y) ) \right\rVert^2.
\end{gathered}
\end{equation}
In what follows, $d_X:X\times X\to\real$ will denote the metric $X$ inherits from $M$.

\begin{thm}\label{thm:inf_delay_decomp}
Let Assumption \ref{asmptn:standing} hold, and $F=F_{\Disc} + F_{\Disc^\bot}$ be the $L^2 $ decomposition of $ F $ from~\eqref{eqn:L2_decomp}. Then, $ \InfDelay(F) $ in~\eqref{eqn:def:inf_delays} is well-defined as a function in $L^\infty(X\times X,\mu\times\mu)$, and $\Psi_Q(F)$ converges to $\Psi(F)$ in $L^p(X\times X,\mu\times \mu) $ norm for $ 1 \leq p <\infty$. Moreover:
\begin{enumerate}[(i)]
\item For every $ t \in \real $ and $\mu$-a.e. $ x, y \in X$, $\InfDelay(F)(\Phi^{t}(x), \Phi^{t}(y))=\InfDelay(F)(x,y)$. %
\item For $\mu$-a.e. $x,y\in X$, $ \InfDelay(F)(x,y) =\InfDelay(F_{\Disc^\bot})(x,y) + \InfDelay(F_{\Disc})(x,y)$. 
\item $\InfDelay(F_{\Disc^\bot})$ is a constant almost everywhere and equals $2 \|F_{\Disc^\bot}\|_{L^2}^2$. Therefore, 
\begin{equation}\label{eqn:InfDelay_chrctzn}
\InfDelay(F) = \InfDelay(F_{\Disc}) + 2 \|F_{\Disc^\bot}\|_{L^2}^2.
\end{equation}
In particular, $\InfDelay(F) \in \Disc\times\Disc $. 
\setcounter{enum_sav}{\value{enumi}}
\end{enumerate}
If, moreover, Assumption \ref{asmptn:C0_ae} holds:
\begin{enumerate}[(i)]
\setcounter{enumi}{\value{enum_sav}} 
\item $\InfDelay(F_{\Disc})\in C^0(X\times X)$ and $\InfDelay_Q(F_{\Disc})$ converges to $\InfDelay(F_{\Disc})$ uniformly on $X\times X$. 
\item $\InfDelay(F)$ is uniformly continuous on a full-measure, dense subset of $X\times X$. 
\item $ \InfDelay(F) $ has a unique continuous extension $ \bar \Psi( F ) \in C^0(X\times X) $, and $ \Psi_Q( F ) $ converges to $ \bar \Psi( F ) $ $\mu$-almost uniformly.
\end{enumerate}
\end{thm}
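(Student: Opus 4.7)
The plan is to split $F=F_{\Disc}+F_{\Disc^\perp}$ and expand $\lVert F(\Phi^{q\,\Delta t}x)-F(\Phi^{q\,\Delta t}y)\rVert^2$ so that $\Psi_Q(F)$ decomposes into three groups of averages: (a) diagonal squared-norm averages along a single orbit, (b) pure $\Disc$--$\Disc$ and $\Disc^\perp$--$\Disc^\perp$ cross averages at $x,y$, and (c) mixed $\Disc$--$\Disc^\perp$ cross averages at $x,y$. Group (a) converges $\mu$-a.e.\ to $\lVert F_{\Disc}\rVert_{L^2}^2$ or $\lVert F_{\Disc^\perp}\rVert_{L^2}^2$ by the Birkhoff ergodic theorem for $\Phi^{\Delta t}$. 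Group (c) vanishes $\mu\times\mu$-a.e.\ by Lemma~\ref{lem:Decor} applied componentwise to the observable components $F_{i,\Disc}$ and $F_{i,\Disc^\perp}$. Once the two diagonal crosses in (b) are handled, (ii) is an immediate rearrangement and (iii) follows by assembling the pieces.

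For the pure $\Disc^\perp$ diagonal cross $\frac{1}{Q}\sum_q\sum_i F_{i,\Disc^\perp}(\Phi^{q\,\Delta t}x)F_{i,\Disc^\perp}(\Phi^{q\,\Delta t}y)$, I apply the Birkhoff theorem on the (generally non-ergodic) product system $(\Phi^{\Delta t}\times\Phi^{\Delta t},\mu\times\mu)$ acting on the $L^1$ function $F_{i,\Disc^\perp}\otimes F_{i,\Disc^\perp}$. The limit is the $L^2$-projection onto the eigenspace of $U^{\Delta t}\otimes U^{\Delta t}$ at eigenvalue $1$. All nonzero eigenvectors of $U^{\Delta t}\otimes U^{\Delta t}$ live in $\Disc\otimes\Disc$, because on each of the three summands $\Disc\otimes\Disc^\perp$, $\Disc^\perp\otimes\Disc$, and $\Disc^\perp\otimes\Disc^\perp$ the spectral measure of $U^{\Delta t}\otimes U^{\Delta t}$ is a convolution involving an atomless factor and therefore has no atoms. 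Since $F_{i,\Disc^\perp}$ is orthogonal to every Koopman eigenfunction, $\langle F_{i,\Disc^\perp}\otimes F_{i,\Disc^\perp},z_{\vec a}\otimes z_{\vec b}\rangle=0$ for all $\vec a,\vec b$, so the projection vanishes and the cross converges to $0$ a.e. This yields (iii) with constant $2\lVert F_{\Disc^\perp}\rVert_{L^2}^2$; (i) follows because Birkhoff limits are automatically $\Phi^{\Delta t}\times\Phi^{\Delta t}$-invariant, and the ergodicity of $\Phi^{\Delta t}$ in Assumption~\ref{asmptn:standing} forces $\omega_{\vec a}+\omega_{\vec b}\in(2\pi/\Delta t)\integer$ to collapse to $\omega_{\vec a}+\omega_{\vec b}=0$, upgrading invariance to the full continuous-time flow. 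The inclusion $\Psi(F)\in\Disc\otimes\Disc$ comes from the analogous calculation on the pure $\Disc$ diagonal cross: expanding $F_{\Disc}=\sum_{\vec a}\hat f_{\vec a}z_{\vec a}$ and using $\hat f_{-\vec a}=\overline{\hat f_{\vec a}}$, the limit is $\sum_{\vec a}\lVert\hat f_{\vec a}\rVert^2\,z_{\vec a}(x)\overline{z_{\vec a}(y)}$, visibly in $\Disc\otimes\Disc$. The $L^p$ statement for $1\leq p<\infty$ then follows from a.e.\ convergence together with uniform boundedness of $\Psi_Q(F)$ on $X\times X$ (a consequence of continuity of $F$ on the compact set $X$) and dominated convergence.

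For (iv) under Assumption~\ref{asmptn:C0_ae}, I invoke Proposition~\ref{prop:semi_conj_pi} to obtain a continuous semiconjugacy $\pi:X\to\TorusD{D}$ intertwining $\Phi^t$ with an ergodic torus rotation $\Omega^t$, through which every continuous Koopman eigenfunction factors. Consequently $F_{\Disc}=\hat F\circ\pi$ for some $\hat F\in C^0(\pi(X))$, which can be continuously extended to $\TorusD{D}$, giving $\Psi_Q(F_{\Disc})(x,y)=\hat\Psi_Q(\pi(x),\pi(y))$. On the torus, Weyl's uniform equidistribution theorem for ergodic rotations acting on continuous functions yields uniform convergence of $\hat\Psi_Q$ to its limit $\hat\Psi$ on $\TorusD{D}\times\TorusD{D}$, and continuity of $\pi$ transfers this to uniform convergence of $\Psi_Q(F_{\Disc})\to\Psi(F_{\Disc})$ on $X\times X$. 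For (v) and (vi), the identity $\Psi(F)=\Psi(F_{\Disc})+2\lVert F_{\Disc^\perp}\rVert_{L^2}^2$ from (iii) makes the right-hand side continuous everywhere on $X\times X$, defining the extension $\bar\Psi(F)$; almost uniform convergence of $\Psi_Q(F)$ to $\bar\Psi(F)$ is obtained by combining the uniform convergence just proved for $\Psi_Q(F_{\Disc})$ with Egorov's theorem applied to the remaining a.e.-convergent diagonal and mixed cross terms on the probability space $X\times X$.

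The main technical obstacle is the product-system Birkhoff step for the pure $\Disc^\perp$ diagonal cross: because $\Phi^{\Delta t}\times\Phi^{\Delta t}$ is generally not ergodic, one must precisely identify its invariant $L^2$-subspace via the atom structure of the joint spectral measure of $U^{\Delta t}\otimes U^{\Delta t}$, and then verify that $F_{\Disc^\perp}\otimes F_{\Disc^\perp}$ is orthogonal to all of it---this is where the atomless-convolution property of tensor-product spectral measures is essential. A secondary subtlety in (iv) is checking that the torus factor $\hat F$ of a continuous $F_{\Disc}$ is itself continuous on $\pi(X)$, which rests on continuity of $\pi$ and the eigenfunction expansion guaranteed by Assumption~\ref{asmptn:C0_ae}.
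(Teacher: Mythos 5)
Your decomposition of $\Psi_Q(F)$ into diagonal, pure-$\Disc$/pure-$\Disc^\perp$ cross, and mixed cross averages is the same as the paper's, as is the use of Lemma~\ref{lem:Decor} to kill the mixed terms and of the product-system spectral structure to kill the pure $\Disc^\perp$ cross. Your version is somewhat more explicit: where the paper simply asserts that $F_{\Disc^\perp}\otimes F_{\Disc^\perp}$ ``lies in the continuous spectrum subspace of the product system'' so that its Birkhoff average is $\langle J, 1\rangle=0$, you spell out the atomless-convolution argument for the maximal spectral measure of $U^{\Delta t}\otimes U^{\Delta t}$ on the three tensor summands involving $\Disc^\perp$. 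Likewise, for claim~(i), the paper passes from $\Phi^{\Delta t}\times\Phi^{\Delta t}$-invariance to membership in $\ker(V\otimes V)$ without comment, while you identify the fixed subspace explicitly via the constraint $\omega_{\vec a}+\omega_{\vec b}\in(2\pi/\Delta t)\integer$ and then use ergodicity of $\Phi^{\Delta t}$ to conclude $\omega_{\vec a}+\omega_{\vec b}=0$ --- this is a tidy way to make the paper's step precise, and it buys you the $\Disc\otimes\Disc$ membership in~(iii) as a byproduct. These parts are correct.

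The genuine divergence --- and the one place you have a gap --- is claim~(iv). The paper disposes of it in a single line by invoking Krengel's uniform ergodic theorem (Theorem~1.2.7 of~\cite{Krengel85}) applied to the continuous observable $(x,y)\mapsto\lVert F_{\Disc}(x)-F_{\Disc}(y)\rVert^2$ on $X\times X$. You instead factor through the semiconjugacy $\pi:X\to\TorusD{D}$ of Proposition~\ref{prop:semi_conj_pi} and appeal to uniform equidistribution for the torus rotation. The obstruction is the step ``$F_{\Disc}=\hat F\circ\pi$ for some $\hat F\in C^0(\pi(X))$''. What you actually have is an $L^2$ identity: $F_{\Disc}=\sum_{\vec a}\hat f_{\vec a}\,z_{\vec a}$ with $z_{\vec a}=\zeta_{\vec a}\circ\pi$, so $F_{\Disc}$ agrees $\mu$-a.e.\ with a function that factors through $\pi$. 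That does not imply that the continuous representative of $F_{\Disc}$ is constant on $\pi$-fibers (i.e., that $\pi(x_1)=\pi(x_2)\Rightarrow F_{\Disc}(x_1)=F_{\Disc}(x_2)$ for \emph{all} $x_1,x_2$, not just a.e.); the partial sums $\sum_{|\vec a|\leq N}\hat f_{\vec a}z_{\vec a}$ converge to $F_{\Disc}$ only in $L^2$, not pointwise. In the L63 example $\pi$ is essentially projection onto the $S^1$ factor, so the fibers are copies of the Lorenz attractor and the needed constancy is a genuinely non-trivial statement. Without it, the continuous function $\hat F$ on $\TorusD{D}$ is not produced, and the appeal to Weyl equidistribution on the torus cannot be launched. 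You flag this as a ``secondary subtlety'' resting on continuity of $\pi$ and the eigenfunction expansion, but those two facts alone do not close it. Krengel's theorem avoids the factoring entirely by working with Birkhoff averages of the continuous function on $X\times X$ directly, which is why the paper takes that route. The remainder of your proof of~(v)--(vi) --- the splitting into a uniformly convergent $\Disc$-piece and an a.e.-convergent remainder handled by Egorov --- is fine once~(iv) is in place, and is close in spirit to the paper's argument.
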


\begin{proof} To prove well-definition of $ \Psi $, note that $\InfDelay(F)$ exists $ \mu $-a.e.\ since it is the pointwise limit of the Birkhoff averages $\InfDelay_Q(F)$ of the continuous function $ d_1: ( x, y ) \to \lVert F( x ) - F( y ) \rVert $ with respect to the product flow $ \Phi^t \times \Phi^t $ on $X\times X$. By compactness of $X\times X$, each of the functions $\InfDelay_Q(F)$ is bounded above by $ \lVert d_1 \rVert_{C^0(X\times X)} $. Therefore, $ \Psi( F ) $ lies in $ L^\infty( X \times X, \mu \times \mu ) $, and thus in $L^p( X \times X, \mu \times \mu ) $, $ 1 \leq p < \infty$, since $ \mu \times \mu $ is a probability measure. The $\Psi_Q( F ) \to \Psi(F)$ convergence in $ L^p(X\times X, \mu\times\mu ) $, 1 $ \leq p < \infty $, then follows from the $L^p $ Von Neumann ergodic theorem.

By the invariance of the infinite Birkhoff averages, $\InfDelay(F)$ is invariant under the flow $\Phi^{\Delta t}\times\Phi^{\Delta t}$. Thus $\InfDelay(F)$ must lie in the kernel of $V\otimes V$ and thus is invariant under the flow $\Phi^{t}\times\Phi^{t}$ for all $t \in \real$, proving Claim (i).

To prove Claim (ii), let $x_q$ and $ y _q$ denote $\Phi^{q\, \Delta t}(x)$ and $\Phi^{q\,\Delta t}(y)$ respectively. Let $G_{\Disc}:X\times X\to\real^d$:= $(x,y)\to F_{\Disc}(x_q) - F_{\Disc}(y_q)$, and similarly define $G_{\Disc^\bot}:X\times X\to\real^d$. Expanding the right-hand side of~\eqref {eqn:def:inf_delays} gives,
\[\begin{split}
\InfDelay(F)(x,y) &= \lim_{Q\to\infty}\frac{ 1 }{Q} 
\sum_{q=0}^{Q-1} \left( \left\lVert G_{\Disc}(x_q,y_q)\right\rVert^2 + \left \lVert G_{\Disc^\bot}(x_q,y_q)\right\rVert^2 \right)\\
& -2\lim_{Q\to\infty}\frac{ 1 }{Q} \sum_{q=0}^{Q-1} G_{\Disc}(x_q,y_q) \cdot G_{\Disc^\bot}(x_q,y_q) ,
\end{split}\]
and the first two terms in the equation above are $\InfDelay(F_{\Disc})(x,y)$ and $\InfDelay(F_{\Disc^\bot})(x,y)$ respectively. Therefore, to prove Claim~(ii), it suffices to prove that the third term vanishes. This is equivalent to showing that for $\mu$-a.e.\ $x,y\in X$, 
\begin{displaymath}
\lim_{Q\to\infty} \frac{ 1 }{Q} \sum_{q=0}^{Q-1} \left(F_{\Disc^\bot}(x_q)-F_{\Disc^\bot}(y_q)\right)\cdot\left(F_{\Disc}(x_q)-F_{\Disc}(y_q)\right)=0,
\end{displaymath}
which follows from Lemma~\ref{lem:Decor}. This completes the proof of Claim~(ii).

To prove Claim~(iii), let $x_n$ and $y_n$ denote $\Phi^{n\, \Delta t }(x)$ and $\Phi^{n \, \Delta t}(y)$, respectively. Then, \eqref{eqn:def:inf_delays} can be rewritten for $F_{\Disc^\bot}$ as
\[\begin{split}
(\InfDelay F_{\Disc^\bot})(x,y) &= \lim_{N\to\infty}\frac{1}{N}\sum_{n=0}^{N-1}|F_{\Disc^\bot}(x_n)|^2 + \lim_{N\to\infty}\frac{1}{N}\sum_{n=0}^{N-1}|F_{\Disc^\bot}(y_n)|^2 \\
& + 2\lim_{N\to\infty}\frac{1}{N}\sum_{n=0}^{N-1}F_{\Disc^\bot}(x_n)F_{\Disc^\bot}(y_n).
\end{split}\]
The first two terms converge to the constant $\|F_{\Disc^\bot}\|_{L^2}^2$. It is therefore sufficient to show that the last term vanishes. Indeed, since the function $J:(x,y)\to F_{\Disc^\bot}(x)F_{\Disc^\bot}(y)$ lies in the continuous spectrum subspace of the product-system $(X\times X,\Phi^t\times \Phi^t,\mu\times\mu)$, we have 
\[\lim_{N\to\infty}\frac{1}{N}\sum_{n=0}^{N-1}F_{\Disc^\bot}(x_n)F_{\Disc^\bot}(y_n) =\langle J, 1_{X\times X} \rangle = 0.\]

Since $F_{\Disc}$ is continuous, $\InfDelay(F_{\Disc})$ is continuous by a classic result of Krengel (\cite{Krengel85}, Theorem 1.2.7). This proves Claim~(iv).

Turning to Claim (v), it follows directly from Claims (iii) and (iv) that there exists a full-measure subset $ S \subseteq X \times X $ on which $ k_{\infty} $ is uniformly continuous. Suppose that $S$ were not dense in $ X \times X $. Then, there would exist an open set $ B \subset X \times X $ disjoint from $ S $, and with positive measure (since $X \times X $ is the support of $ \mu \times \mu$, and every open subset of the support of a Borel measure has positive measure), which would in turn imply that $ (\mu \times \mu)(S) < 1$, leading to a contradiction. Therefore, $ S $ is a full-measure, dense subset of $ X \times X $, completing the proof of the claim.

Finally, the existence of $ \bar \Psi( F ) $ in Claim (vi) follows from the fact that $ \Psi( F ) $ is uniformly continuous on the dense subset $ S $ of the compact metric space $ X \times X $, and the almost uniform convergence of $ \Psi_Q( F ) $ to $ \bar \Psi( F ) $ is a consequence of Egorov's theorem. \qed
\end{proof}

\begin{rk*} \blue{Although the measure $\mu\times\mu$ is invariant under $\Phi^t\times\Phi^t$, it is not ergodic. In fact, it is ergodic iff $(\Phi^t,\mu)$ is mixing (equivalently, $U^t$ has purely continuous spectrum and a simple eigenvalue at 1), in which case the metric $d_\infty$ would be constant almost everywhere, in accordance with \eqref{eqn:InfDelay_chrctzn}. }
\end{rk*}

Theorem~\ref{thm:inf_delay_decomp} establishes that the function $ d_\infty : D( d_\infty ) \to \real $, such that 
\begin{displaymath}
d_{\infty}(x,y) := \lim_{Q\to\infty} d_{Q}(x,y); \quad ( x, y ) \in D( d_\infty) \subseteq X \times X
\end{displaymath}
is well-defined as a function in $ L^p( X \times X, \mu \times \mu ) $, $ 1 \leq p \leq \infty $, with $ \sup d_\infty\leq \lVert d_1 \rVert_{C^0(X\times X)} $. It can also be verified that $d_\infty$ satisfies the triangle inequality and is non-negative. However, depending on the properties of the dynamical system and observation map, it may be a degenerate metric as $ d_\infty( x,y ) $ may vanish for some $x\neq y$, even if $ d_Q(x,y)$ is non-vanishing. In fact, it is easy to check that if $y $ lies in the stable manifold of $ x $, then $d_\infty(x,y)=0$. Analogously to the finite-delay case in~\eqref{eqKQ2}, we employ $ d_\infty $ and the shape function $h$ to define a corresponding kernel $ k_{\infty} : M \times M \to \real $, where 
\begin{equation}\label{eqKInf}
k_{\infty}(x,y) = h( d_\infty(x,y)), \quad ( x, y ) \in D( d_\infty), 
\end{equation}
and $ k_{\infty}(x,y)=0 $ otherwise. We also let $ K $ be the kernel integral operator from~\eqref{eqn:KOp} associated with $ k_{\infty} $. 

Proposition~\ref{prop:observability} shows that the operator $K$ depends only on the quasiperiodic component of $F$, and is a direct consequence of Theorem~\ref{thm:inf_delay_decomp} and \eqref{eqn:InfDelay_chrctzn}.
\begin{prop}\label{prop:observability}
Let $(X,\Phi^t,\mu)$ and $F$ be as in Theorem~\ref{thm:A}. Then, the integral operator $K$ is a constant scaling operator iff its kernel $k_{\infty}$ is a constant $\mu$-a.e., which occurs iff $F_{\Disc}$ is a constant.
\end{prop}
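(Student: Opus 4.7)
The statement chains two equivalences: (a) $K$ is a constant scaling operator iff $k_\infty \equiv$ const $\mu\times\mu$-a.e.; and (b) $k_\infty \equiv$ const iff $F_{\Disc}$ is constant. My plan is to prove these separately, using~\eqref{eqn:KOp}, \eqref{eqKInf}, and Theorem~\ref{thm:inf_delay_decomp}(iii) as the main inputs.

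For~(a), the reverse implication is immediate from~\eqref{eqn:KOp}: $k_\infty\equiv c$ gives $Kf = c\langle 1_X, f\rangle \, 1_X$ for all $f\in L^2(X,\mu)$, so $K$ maps every observable to a scalar multiple of the constant function. For the forward implication, I would test $K$ against characteristic functions $f=1_A$ over Borel $A\subseteq X$, concluding that $x\mapsto \int_A k_\infty(x,y)\,d\mu(y)$ is constant in $x$ for every $A$; this forces the slices $k_\infty(x,\cdot)$ to agree for $\mu$-a.e.\ $x$. Symmetry of $k_\infty$ (inherited via~\eqref{eqKInf} from the symmetric pseudo-metric $d_\infty$) then yields that $k_\infty$ is also independent of $y$, hence constant a.e.

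For~(b), because $k_\infty = h\circ d_\infty$ with the Gaussian shape function $h$ strictly monotonic on $[0,\infty)$, $k_\infty$ is constant a.e.\ iff $\Psi(F)=d_\infty^2$ is constant a.e., which by~\eqref{eqn:InfDelay_chrctzn} reduces further to $\Psi(F_{\Disc})\equiv$ const a.e. The ``if'' direction is trivial. For the ``only if,'' I would expand each component of $F_{\Disc}$ in the Koopman eigenbasis via the closed-form evolution~\eqref{eqClosedForm}, substitute into the Birkhoff average defining $\Psi(F_{\Disc})$, and apply the mean ergodic theorem on the product system $(X\times X,\Phi^t\times\Phi^t,\mu\times\mu)$. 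Simplicity of the generator's eigenvalues kills the off-diagonal phase factors $e^{i(\omega_{\vec b}-\omega_{\vec a}) q\,\Delta t}$ unless $\vec a=\vec b$, yielding
\begin{displaymath}
\Psi(F_{\Disc})(x,y) = 2 \sum_{\vec a \neq 0} \lvert \hat f_{\vec a}\rvert^2 \bigl(1 - \Real(z_{\vec a}^*(x)\, z_{\vec a}(y))\bigr),
\end{displaymath}
where I use $\lvert z_{\vec a}\rvert\equiv 1$ a.e.\ and $z_0\equiv 1$. The ``diagonal'' tensor eigenfunctions $z_{\vec a}^*(x)\,z_{\vec a}(y)$ with $\vec a \neq 0$ are pairwise orthogonal in $L^2(X\times X, \mu\times\mu)$ and orthogonal to the constants, so constancy of $\Psi(F_{\Disc})$ forces every $\hat f_{\vec a}$ with $\vec a\neq 0$ to vanish; thus $F_{\Disc}\equiv \hat f_0$.

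The main obstacle is the converse in~(b): one must carry out the explicit tensor-product expansion of $\Psi(F_{\Disc})$ on $X\times X$ and use linear independence (modulo constants) of the diagonal tensor eigenfunctions. All other implications are essentially formal consequences of~\eqref{eqn:KOp}, \eqref{eqKInf}, and Theorem~\ref{thm:inf_delay_decomp}(iii).
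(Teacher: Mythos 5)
Your two-step decomposition, (a) $K$ constant-scaling $\Leftrightarrow$ $k_\infty$ constant a.e., and (b) $k_\infty$ constant a.e.\ $\Leftrightarrow$ $F_{\Disc}$ constant, is exactly the structure the paper has in mind; the paper gives no proof beyond calling the result ``a direct consequence of Theorem~\ref{thm:inf_delay_decomp} and \eqref{eqn:InfDelay_chrctzn},'' so your write-up is the natural filling-in. Part~(a) and the reduction of~(b) to constancy of $\Psi(F_{\Disc})$ (via strict monotonicity of $h$ and \eqref{eqn:InfDelay_chrctzn}) are correct. The genuinely non-immediate step is the converse in~(b), and your tensor-eigenbasis computation carries it through, though a few technical points should be tightened. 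First, Lemma~\ref{lem:erg_Fouri} and the discrete-time ergodic average kill the off-diagonal terms only if $e^{i\omega_{\vec a}\,\Delta t}\neq e^{i\omega_{\vec b}\,\Delta t}$ for $\vec a\neq\vec b$; this is not quite ``simplicity of the generator's eigenvalues'' but rather ergodicity of $\Phi^{\Delta t}$ (Assumption~\ref{asmptn:standing}), which guarantees that distinct Koopman eigenfunctions have distinct $U^{\Delta t}$-eigenvalues. Second, the labeling by $\vec a\in\mathbb{Z}^m$ from~\eqref{eqEigProd} and the closed form~\eqref{eqClosedForm} presuppose a finitely generated point spectrum, which the paper only asserts under the continuity hypothesis of Assumption~\ref{asmptn:C0_ae}; but your argument never actually needs finiteness, so it is cleaner to expand $F_{\Disc}$ in an arbitrary orthonormal basis of normalized eigenfunctions $\{\zeta_j\}_{j\geq 0}$ of $\Disc$ with $\zeta_0=1_X$. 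Third, the functions $\Real(z_{\vec a}^*\otimes z_{\vec a})$ are \emph{not} pairwise orthogonal as stated: $\Real(z_{\vec a}^*\otimes z_{\vec a}) = \Real(z_{-\vec a}^*\otimes z_{-\vec a})$. What is true, and suffices, is that the \emph{complex} tensor eigenfunctions $z_{\vec a}^*\otimes z_{\vec a}$, $\vec a\neq 0$, are pairwise orthogonal and orthogonal to $1_{X\times X}$; testing the constancy relation against $z_{\vec b}^*\otimes z_{\vec b}$ yields $\lvert\hat f_{\vec b}\rvert^2 + \lvert\hat f_{-\vec b}\rvert^2 = 0$, whence all $\hat f_{\vec a}$, $\vec a\neq 0$, vanish. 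With these adjustments the argument is sound.
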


In general, $ k_{\infty} $ may not be continuous. Nevertheless, it has a number of other useful properties, which follow directly from Theorem~\ref{thm:inf_delay_decomp} in conjunction with the boundedness and continuity of the Gaussian shape function.
\begin{lem}\label{lem:KInf}
Under Assumption~\ref{asmptn:standing}, the following hold:
\begin{enumerate}[(i)]
\item $k_{\infty} $ is the $ L^p(X,\mu) $-norm limit, $ 1 \leq p < \infty $, of the sequence of continuous kernels $ k_{1}, k_{2},\ldots$.
\item $k_{\infty}$ is invariant under $U^t \times U^t $ for all $ t \in \real $. 
\item $ k_{\infty} $ lies in $ L^\infty( X \times X, \mu \times \mu ) $, and under Assumption~\ref{asmptn:ker}(ii), $1/k_\infty$ also lies in that space.
\setcounter{enum_sav}{\value{enumi}}
\end{enumerate}
Moreover, if Assumption~\ref{asmptn:C0_ae} additionally holds: 
\begin{enumerate}[(i)]
\setcounter{enumi}{\value{enum_sav}}
\item $k_{\infty}$ is uniformly continuous on a dense, full-measure subset of $ X \times X $. 
\item $ k_{\infty} $ has a unique continuous representative $ \bar k_{\infty} \in C^0(X\times X)$, and as $ Q \to \infty $, $k_{Q} $ converges to $ k_{\infty} $ almost uniformly.
\end{enumerate}
\end{lem}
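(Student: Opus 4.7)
The plan is to obtain each claim of Lemma~\ref{lem:KInf} as a transfer of the corresponding property of $\Psi(F)$ and $\Psi_Q(F)$ established in Theorem~\ref{thm:inf_delay_decomp}, using the factorization $k_Q = h \circ d_Q$ with $d_Q = \sqrt{\Psi_Q(F)}$ and $k_\infty = h \circ d_\infty$ with $d_\infty = \sqrt{\Psi(F)}$. The key auxiliary observation is that the composed map $\phi: s \mapsto h(\sqrt{s})$, defined on $[0,\infty)$, is continuous and uniformly continuous on every bounded sub-interval, and that $h$ itself is globally bounded (for the Gaussian choice $h(s) = e^{-s^2/\epsilon}$, by $1$). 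These two facts will allow me to push convergence-in-measure, almost-uniform convergence, and uniform-continuity statements forward from $\Psi_Q(F), \Psi(F)$ to $k_Q, k_\infty$, while the global bound supplies the dominating function for any $L^p$-type argument.

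For claim (i), the $L^p$ convergence $\Psi_Q(F) \to \Psi(F)$ from Theorem~\ref{thm:inf_delay_decomp} implies convergence in $(\mu \times \mu)$-measure; continuity of $\phi$ transfers this to $k_Q \to k_\infty$ in measure, and the uniform bound $|k_Q| \leq \lVert h \rVert_{C^0}$ then yields $L^p$ convergence for $1 \leq p < \infty$ by bounded convergence. Continuity of each $k_Q$ follows from continuity of $d_Q$ and $h$. For claim (ii), Theorem~\ref{thm:inf_delay_decomp}(i) gives $\Phi^t \times \Phi^t$-invariance of $\Psi(F)$ almost everywhere, and composition with $\phi$ preserves this invariance pointwise, which is precisely the assertion that $k_\infty$ is invariant under $U^t \otimes U^t$. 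For claim (iii), $\Psi(F) \in L^\infty$ means $d_\infty$ is essentially bounded by some $M$, so boundedness of $h$ gives $k_\infty \in L^\infty$; under Assumption~\ref{asmptn:ker}(ii), continuity of the strictly positive $h$ on the compact interval $[0, M]$ produces a positive lower bound $c > 0$, so $k_\infty \geq c$ almost everywhere and $1/k_\infty \in L^\infty$.

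For claims (iv) and (v), Theorem~\ref{thm:inf_delay_decomp}(v)(vi) supplies, under Assumption~\ref{asmptn:C0_ae}, a dense full-measure subset $S \subseteq X \times X$ on which $\Psi(F)$ is uniformly continuous, a unique continuous extension $\bar\Psi(F) \in C^0(X \times X)$, and almost uniform convergence $\Psi_Q(F) \to \bar\Psi(F)$. Since $\phi$ is uniformly continuous on any bounded interval containing the ranges of these functions, uniform continuity of $k_\infty = \phi(\Psi(F))$ on $S$, the existence and uniqueness of the continuous representative $\bar k_\infty := \phi(\bar\Psi(F))$, and the almost uniform convergence $k_Q \to \bar k_\infty$ all follow at once by composition with $\phi$.

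The only step requiring genuine care is claim (i), where the input is $L^p$ convergence rather than pointwise convergence almost everywhere, so one cannot simply appeal to continuity of $\phi$ along trajectories; the resolution is the standard passage through convergence in measure combined with the uniform boundedness of $h$. All other claims are direct bookkeeping that exploits the uniform continuity of $\phi$ on bounded intervals.
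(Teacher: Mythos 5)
Your proposal is correct and follows precisely the route the paper intends (the lemma is presented there as a direct consequence of Theorem~\ref{thm:inf_delay_decomp} together with the boundedness and continuity of the shape function). One small simplification: since $\Psi(F)$ is \emph{defined} as the pointwise $\mu\times\mu$-a.e.\ limit of the Birkhoff averages $\Psi_Q(F)$, pointwise a.e.\ convergence is available directly from Theorem~\ref{thm:inf_delay_decomp}, so your detour through convergence in measure in claim (i) is avoidable --- continuity of the composed map $s\mapsto h(\sqrt{s})$ together with the uniform bound on $h$ and dominated convergence suffices.
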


The stronger regularity properties of $ k_{\infty} $ under Assumption~\ref{asmptn:C0_ae} have the following important implications on the behavior of the corresponding integral operator.
\begin{lem} \label{lem:KL2C0} Under Assumptions~\ref{asmptn:standing} and~\ref{asmptn:C0_ae}, the kernel integral operator $K$ associated with $ k_{\infty} $ has the following properties: 
\begin{enumerate}[(i)]
\item For every $ f \in L^2( X, \mu ) $, $ K f $ has a unique continuous representative. 
\item For every $ f \in C^0( X ) $, $ K f $ is continuous. 
\item $ \lVert K \rVert \leq \lVert k_{\infty} \rVert_{L^\infty(X\times X)} $ in either $L^2 $ or $ C^0 $ operator norm.
\item As an operator on $ C^0(X) $, $ K $ is compact.
\item For every $f\in C^0(X)$, $K_{Q} f$ is a sequence of continuous functions converging $\mu$-a.e. to $K f$.
\end{enumerate}
\end{lem}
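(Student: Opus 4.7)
}
The plan is to lift all statements from the $L^\infty$ kernel $k_\infty$ to its continuous representative $\bar k_\infty \in C^0(X \times X)$ supplied by Lemma~\ref{lem:KInf}(v), and then exploit compactness of $X\times X$ together with the fact that $\mu$ is a probability measure. Throughout, we work with the unambiguous pointwise formula $Kf(x) = \int_X \bar k_\infty(x,y) f(y)\, d\mu(y)$; since $\bar k_\infty = k_\infty$ in $L^2(X \times X, \mu\times\mu)$, this coincides $\mu$-a.e.\ with the $L^2$ definition of $K f$.

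For parts (i) and (ii), I would fix $f \in L^2(X,\mu)$ (which contains $C^0(X)$ since $\mu$ is a probability measure) and verify continuity of $Kf$ by dominated convergence: if $x_n \to x$ in $X$, then $\bar k_\infty(x_n,y)\to \bar k_\infty(x,y)$ uniformly in $y$ by uniform continuity of $\bar k_\infty$ on the compact set $X\times X$, and the integrand is dominated by $\lVert k_\infty \rVert_{L^\infty(X\times X)} |f(y)|$, which is $\mu$-integrable. Hence $Kf(x_n) \to Kf(x)$, proving (ii); uniqueness of the continuous representative in (i) follows because $X$ is the support of $\mu$, so any two continuous functions that agree $\mu$-a.e.\ agree pointwise.

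For (iii), the $C^0$ bound follows from $|Kf(x)| \le \lVert k_\infty \rVert_{L^\infty(X\times X)} \lVert f \rVert_{C^0} \mu(X) = \lVert k_\infty\rVert_{L^\infty(X\times X)} \lVert f \rVert_{C^0}$, using $\mu(X)=1$. For the $L^2$ bound, Cauchy--Schwarz applied inside the integral and then integration in $x$ gives
\begin{equation*}
\lVert Kf \rVert_{L^2}^2 \le \int_X \left(\int_X |\bar k_\infty(x,y)|^2 \, d\mu(y)\right) \lVert f \rVert_{L^2}^2 \, d\mu(x) \le \lVert k_\infty \rVert_{L^\infty(X\times X)}^2 \lVert f \rVert_{L^2}^2.
\end{equation*}
For (iv), I would invoke Arzel\`a--Ascoli on the unit ball of $C^0(X)$: uniform boundedness of $\{Kf : \lVert f\rVert_{C^0}\le 1\}$ follows from (iii), while equicontinuity is a direct consequence of the uniform continuity of $\bar k_\infty$, since $|Kf(x_1)-Kf(x_2)| \le \sup_{y\in X}|\bar k_\infty(x_1,y)-\bar k_\infty(x_2,y)| \cdot \lVert f\rVert_{C^0}$, and the right-hand side tends to zero uniformly in $f$ as $d_X(x_1,x_2)\to 0$.

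Finally, for (v), I would first note that $K_Q f$ is continuous because $k_Q \in C^0(M\times M)$ and $f\in C^0(X)$, so continuity in $x$ again follows from dominated convergence with the continuous, bounded integrand $k_Q(x,y)f(y)$. For the $\mu$-a.e.\ convergence $K_Q f \to Kf$, recall from the Birkhoff ergodic theorem applied to $d_1$ that $d_Q \to d_\infty$ pointwise $\mu\times\mu$-a.e.\ on $X\times X$, and hence $k_Q = h\circ d_Q \to h\circ d_\infty = k_\infty$ pointwise $\mu\times\mu$-a.e.\ by continuity of $h$. By Fubini, for $\mu$-a.e.\ $x\in X$ the slice convergence $k_Q(x,y)\to k_\infty(x,y)$ holds for $\mu$-a.e.\ $y$. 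Since $|k_Q(x,y) f(y)|\le \lVert h\rVert_{C^0}\lVert f\rVert_{C^0}$, dominated convergence then yields $K_Q f(x)\to Kf(x)$ at every such $x$. The main obstacle here is keeping the measure-theoretic bookkeeping clean, i.e., ensuring that the $\mu$-null sets from Fubini are correctly quantified; once that is set up, the dominated convergence argument goes through immediately thanks to the uniform $L^\infty$ bound on the family $\{k_Q\}$.
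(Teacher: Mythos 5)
Your proposal is correct and follows essentially the same strategy as the paper: Cauchy--Schwarz for the norm bound (iii), Arzel\`a--Ascoli for compactness on $C^0(X)$ in (iv), and Fubini plus dominated convergence for the $\mu$-a.e.\ convergence in (v). The only real departure is in parts (i)--(ii), where you invoke Lemma~\ref{lem:KInf}(v) to replace $k_\infty$ by its continuous representative $\bar k_\infty$ at the outset and then argue by uniform continuity on the compact set $X\times X$; the paper instead works with Lemma~\ref{lem:KInf}(iv), defining $Kf$ pointwise only on a dense full-measure subset $X'$, showing $Kf\rvert_{X'}$ is uniformly continuous, and then extending by density. Your route is somewhat cleaner since the extension is performed once at the kernel level rather than observable-by-observable, but it is mathematically equivalent; the only thing you leave implicit (and should state) is that $\lVert \bar k_\infty\rVert_{C^0(X\times X)} = \lVert k_\infty\rVert_{L^\infty(X\times X,\mu\times\mu)}$, which holds because $\bar k_\infty$ agrees with $k_\infty$ on a dense full-measure set and $X\times X$ is the support of $\mu\times\mu$.
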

\begin{proof} (i) Since $k_{\infty} $ is uniformly continuous on a set $ S \subseteq X \times X $ of full $\mu\times \mu$ measure, there exists a full $\mu$-measure set $X'\subseteq X$, such that for every $x\in X'$, $k_{\infty}(x,\cdot)$ is continuous $\mu$-a.e.\ on $X$. Moreover, proceeding analogously to the proof of Theorem~\ref{thm:inf_delay_decomp}(v), it can be shown that $X' $ is dense in $X$. Let now $f\in L^2(X,\mu)$, $\|f\|_{L^2} =1 $. Then, for every $x_1, x_2\in X'$,
\begin{align} 
\nonumber \left| K f(x_1) - K f(x_2) \right| &= \left| \int_{X'} [k_{\infty}(x_1,y) - k_{\infty}(x_2,y) ] f(y) d\mu(y) \right| \\
\nonumber&\leq \| k_{\infty}(x_1,\cdot) - k_{\infty}(x_2,\cdot)\|_{L^2} \| f\|_{L^2} \\
&\leq \| k_{\infty}(x_1,\cdot) - k_{\infty}(x_2,\cdot)\|_{L^\infty}. 
\end{align}
Since $k_{\infty}$ is uniformly continuous on $S$, for every $\epsilon>0$, there exists $\delta>0$ such that if $d_X(x_1,x_2) < \delta$, $\| k_{\infty}(x_1,\cdot) - k_{\infty}(x_2,\cdot)\|_{L^\infty} < \epsilon$. Thus, for all such $x_1$ and $x_2$, we have $\left| K f(x_1) - K f(x_2) \right| < \epsilon$, which implies that $K f$, restricted to $ X' $, is uniformly continuous. As a result, since $X'$ is dense in the compact metric space $X$, $K f\rvert_{X'}$ has a unique continuous extension $ g \in C^0(X) $. Moreover, since $ X' $ has full measure, $ g $ lies in the same $ L^2 $ equivalence class as $ K f $, proving the claim. 

(ii) Since $ k_{\infty} $ is uniformly continuous on a dense set of full measure, for any $ f \in C^0(X) $, the function $ g : X \times X \to \mathbb{C}$ with $ g(x, y ) = k_{\infty}(x,y)f(y) $ has a unique continuous representative $ \bar g \in C^0(X\times X) $. Therefore, for every $ x \in X $, the function $ k_{\infty}(x,\cdot)f $ is $ \mu $-a.e.\ equal to $ \bar g( x, \cdot ) $ by $ \mu $-a.e.\ continuity of $ k_{\infty}(x, \cdot ) $, and
\begin{displaymath}
K f(x ) = \int_X k_{\infty}(x,y)f(y)\,d\mu(y) = \int_X \bar g_{\infty}(x,y)f(y) \, d\mu(y).
\end{displaymath}
It then follows that $K f $ is continuous by continuity of integrals of $X$-sections of continuous functions on $X \times X $.

(iii) To verify the claim on the $L^2$ and $C^0$ operator norms, observe that for every $ f \in L^2( X, \mu ) $ and $x\in X'$, where $X'$ is as in the proof of Claim~(i),
\[\begin{split}
\left| K f(x) \right| & \leq \left| \int_{X'} k_{\infty}(x,y)f(y) d\mu(y) \right| \\
& \leq \| k_{\infty}(x,\cdot) \|_{L^2} \| f\|_{L^2} \leq \| k_{\infty}(x,\cdot)\|_{L^\infty} \lVert f \rVert_{L^2} \\ 
& \leq \|k_{\infty}\|_{L^\infty(X\times X)} \lVert f \rVert_{L^2}, 
\end{split}\]
and therefore 
\begin{equation}
\label{eqKL2C0}
\lVert K f \rVert_{L^\infty} \leq \lVert k_{\infty} \rVert_{L^\infty(X\times X)} \lVert f \rVert_{L^2}. 
\end{equation}
The bound on the $L^2 $ operator norm follows by setting $ \Vert f \rVert_{L^2} = 1 $ in~\eqref{eqKL2C0}, together with the fact that $ \lVert K f \rVert_{L^2} \leq \lVert K f \rVert_{L^\infty} $. The bound on the $C^0 $ operator norm follows from~\eqref{eqKL2C0} with $ f \in C^0(X) $, in conjunction with the facts that $ \lVert f \rVert_{L^2} \leq \lVert f \rVert_{C^0} $ and $ \lVert K f \rVert_{L^\infty} = \lVert K f \rVert_{C^0} $. 

(iv) Since, by the Arzel\`a-Ascoli theorem, every equicontinuous sequence of functions on a compact metric space has a limit point, it suffices to show that for every sequence $ f_n \in C^0( X ) $ with $ \lVert f_n \rVert_{C^0} \leq 1 $, the sequence $ g_n = K f_n $ has a limit point with respect to $ C^0 $ norm. Let $ \bar k_{\infty} \in C^0( X \times X ) $ be the unique continuous representative of $ k_{\infty} $. For every $ x_1, x_2 \in X $, we have
\begin{displaymath}
\lvert g_n(x_1) - g_n(x_2) \rvert \leq \lVert \bar k_{\infty}(x_1, \cdot ) - \bar k_{\infty}( x_2, \cdot ) \rVert_{C^0},
\end{displaymath}
and by uniform continuity of $ \bar k_{\infty} $, for any $ \epsilon > 0 $, there exists $ \delta > 0 $, independent of $ n $, such that, for every $ x_1, x_2 \in X $ with $ d(x_1,x_2) < \delta $, $ \lvert g_n(x_1 ) - g_n(x_2) \rvert < \epsilon $. This establishes equicontinuity of $ g_n $, and thus compactness of $ K $ on $ C^0(X) $.

(v) The continuity of $K_Q f$ and $Kf$ follows from Claim~(ii). The $\mu$-a.e.\ convergence follows from Lemma \ref{lem:KInf}(v).
\end{proof}

We end this section with two important corollaries of Theorem~\ref{thm:inf_delay_decomp} and Lemmas~\ref{lem:KInf}, \ref{lem:KL2C0}, which are central to both Theorems~\ref{thm:A} and~\ref{thm:B}. 
\begin{cor}
\label{cor:commute}
The operators $ U^t $ and $ K $ commute.
\end{cor}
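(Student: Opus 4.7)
The plan is to derive the commutation relation $U^t K = K U^t$ directly from the two ingredients already in hand: the $U^t \times U^t$-invariance of the kernel $k_\infty$ (Lemma~\ref{lem:KInf}(ii)) and the $\Phi^t$-invariance of the measure $\mu$ (Assumption~\ref{asmptn:standing}). Fix $f \in L^2(X,\mu)$ and $t \in \real$. The strategy is to write
\begin{displaymath}
(U^t K f)(x) = (K f)(\Phi^t(x)) = \int_X k_\infty(\Phi^t(x), y) f(y) \, d\mu(y),
\end{displaymath}
and then perform the substitution $y = \Phi^t(y')$ under the integral, which is measure-preserving because $\Phi^t_*\mu = \mu$. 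This yields
\begin{displaymath}
(U^t K f)(x) = \int_X k_\infty(\Phi^t(x), \Phi^t(y')) (U^t f)(y') \, d\mu(y').
\end{displaymath}
Applying the invariance $k_\infty(\Phi^t(x),\Phi^t(y')) = k_\infty(x,y')$ pointwise $\mu\times\mu$-a.e. then gives $(U^t K f)(x) = (K U^t f)(x)$ for $\mu$-a.e.\ $x$, which is the desired identity in $L^2(X,\mu)$.

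The only subtlety is that the kernel invariance holds only $\mu\times\mu$-a.e., so one must verify that swapping the a.e.\ identity into the inner integral is legitimate for $\mu$-a.e.\ $x$. This is handled by Fubini's theorem: the set $E \subset X \times X$ on which the invariance fails has measure zero, so for $\mu$-a.e.\ $x \in X$ the $x$-section $E_x$ has $\mu$-measure zero, and the integrand agrees almost everywhere in $y'$ with the unshifted kernel. Combined with $\Phi^t$-invariance of $\mu$, this justifies the change of variables and produces an identity valid for $\mu$-a.e.\ $x$.

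The main potential obstacle is bookkeeping of null sets rather than any deeper analytic difficulty, and it is resolved by the standard Fubini argument above. No use is made of the continuity assumptions from Assumption~\ref{asmptn:C0_ae}, so the corollary holds under Assumption~\ref{asmptn:standing} alone, consistent with its role as a building block for both Theorems~\ref{thm:A} and~\ref{thm:B}.
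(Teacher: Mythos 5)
Your proof is correct and follows essentially the same route as the paper's: both rest on the change of variables $y = \Phi^t(y')$ justified by the $\Phi^t$-invariance of $\mu$, combined with the $U^t\times U^t$-invariance of $k_\infty$ from Lemma~\ref{lem:KInf}(ii). Your additional Fubini-based bookkeeping of the null set where kernel invariance may fail is a reasonable (and correct) tightening that the paper leaves implicit.
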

\begin{proof}
Since $ \mu $ is an invariant measure, for every $ x $ in $ X $ and $ t \in \real $ we have 
\begin{displaymath}
K f( x) = \int_X k_{\infty}(x, y) f( y ) \, d\mu(y) = \int_X k_{\infty}(x, \Phi^t( y )) f( \Phi^t( y ) ) \, d\mu(y). 
\end{displaymath}
It therefore follows from Lemma~\ref{lem:KInf}(ii) that
\begin{displaymath}
K f( x ) = \int_X k_{\infty}( \Phi^{-t}(x),y) f( \Phi^t (y) ) \, d\mu(y) = U^{t*} K U^tf(x),
\end{displaymath}
and the claim of the corollary follows. \qed
\end{proof}

\begin{cor}\label{cor:inf_delay}
Under \blue{Assumptions~\ref{asmptn:standing} and~\ref{asmptn:ker}(ii)}, the function $ \rho = K 1_X $ is $ \mu $-a.e.\ equal to a constant bounded away from zero (i.e., $ 1/\rho $ lies in $ L^\infty(X,\mu) $). Further, if Assumption~\ref{asmptn:C0_ae} holds, then $ \rho\rvert_X $ and $ 1/\rho\rvert_X$ are continuous.
\end{cor}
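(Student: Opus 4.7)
The plan is to exploit the commutation relation $KU^t = U^t K$ from Corollary~\ref{cor:commute} together with ergodicity of $\mu$ to deduce that $\rho$ is essentially constant, and then to use the strict positivity of $k_\infty$ (Assumption~\ref{asmptn:ker}(ii) and Lemma~\ref{lem:KInf}(iii)) to pin down that constant as strictly positive.

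First I would observe that $U^t 1_X = 1_X$ for every $t \in \real$, so the commutation relation gives
\begin{displaymath}
U^t \rho = U^t K 1_X = K U^t 1_X = K 1_X = \rho,
\end{displaymath}
for every $t$. In particular $\rho$ is invariant under $U^{\Delta t}$, and since $(\Phi^{\Delta t}, \mu)$ is ergodic by Assumption~\ref{asmptn:standing}, $\rho$ must be $\mu$-a.e.\ equal to some constant $c \in \real$.

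Next, to show $c > 0$, I would appeal to Lemma~\ref{lem:KInf}(iii): under Assumption~\ref{asmptn:ker}(ii) it yields $1/k_\infty \in L^\infty(X \times X, \mu \times \mu)$, which is equivalent to the existence of $c_0 > 0$ with $k_\infty \geq c_0$ on a full-measure subset of $X \times X$. Integrating then gives $\rho(x) = \int_X k_\infty(x,y)\, d\mu(y) \geq c_0$ for $\mu$-a.e.\ $x \in X$, so the constant value satisfies $c \geq c_0 > 0$, and therefore $1/\rho$ lies in $L^\infty(X,\mu)$.

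Finally, for the continuity statement under Assumption~\ref{asmptn:C0_ae}, I would invoke Lemma~\ref{lem:KL2C0}(i): since $1_X \in L^2(X,\mu)$, $\rho = K 1_X$ admits a unique continuous representative on $X$. This representative agrees with the constant $c$ on a set of full $\mu$-measure, and since $X$ is the support of $\mu$ every full-measure subset of $X$ is dense in $X$; by continuity the representative must equal $c$ on all of $X$. Therefore $\rho\rvert_X \equiv c$ and $1/\rho\rvert_X \equiv 1/c$ are constants, hence continuous. I do not anticipate any serious obstacle here; the main conceptual content is the first step—commutativity plus $U^t$-invariance of $1_X$ plus ergodicity forces $\rho$ to be constant—after which the remaining assertions are direct bookkeeping with the previously established lemmas.
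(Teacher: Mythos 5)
Your proposal is correct and follows essentially the same route as the paper's own (very terse) proof: commutativity of $K$ with $U^t$ plus $U^t 1_X = 1_X$ and ergodicity give $\rho$ constant $\mu$-a.e., Lemma~\ref{lem:KInf}(iii) gives $1/\rho \in L^\infty$, and Lemma~\ref{lem:KL2C0} gives continuity under Assumption~\ref{asmptn:C0_ae}. You merely supply the short intermediate steps the paper leaves implicit (the lower bound $k_\infty \geq c_0$ a.e.\ from $1/k_\infty \in L^\infty$, and the density argument identifying the continuous representative with the constant), so there is nothing to flag.
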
 
\begin{proof}
Corollary~\ref{cor:commute} and the fact that $ U^t 1_X= 1_X $ imply that $ U^t \rho = \rho $, and it then follows by ergodicity that $ \rho $ is constant $ \mu $-a.e. That $\lVert 1/\rho \rVert_{L^\infty} $ is finite follows from Lemma~\ref{lem:KInf}(iii). Finally, the continuity of $ \rho $ under Assumption~\ref{asmptn:C0_ae} is a direct consequence of Lemma~\ref{lem:KL2C0}. \qed 
\end{proof}

\subsection{Markov normalization}\label{secMarkov}

Next, we construct the Markov operators $ P_{Q} $ and $ P $ appearing in Theorems~\ref{thm:A} and~\ref{thm:B} by normalization of $ K_{Q} $ and $ K $. Throughout this section, we consider that Assumptions~\ref{asmptn:standing} and~\ref{asmptn:ker} hold. Under these assumptions, we employ a normalization procedure introduced in the diffusion maps algorithm \cite{CoifmanLafon06} and further developed in \cite{BerrySauer16b}, although there are also other approaches with the same asymptotic behavior. Specifically, using the normalizing functions $\rho_{Q}$ and $\sigma_{Q}$ from Lemma~\ref{lem:Rho} and $\rho$ from Corollary~\ref{cor:inf_delay}, we introduce the kernels $p_{Q} : M \times M \to \real$ and $ p : M \times M \to \real $, given by 
\begin{equation} \label{eqn:PKernel}
p_{Q}( x, y ) = \frac{ k_{Q}(x, y ) }{ \sigma_{Q}( x ) \rho_{Q}(y )}, \quad 
p( x, y ) = 
\begin{cases}
k_{\infty}(x, y )/\rho( x ), & \rho(x) > 0,\\
0, & \text{otherwise},
\end{cases}
\end{equation}
respectively. 
By Lemma~\ref{lem:Rho}, $ p_{Q} $ satisfies the boundedness and continuity properties in Assumption~\ref{asmptn:ker}. On the other hand, $ p $ is neither guaranteed to be continuous nor bounded on arbitrary compact sets, but it nevertheless follows from Lemma~\ref{lem:KInf} and Corollary~\ref{cor:inf_delay} that both $ p $ and $1/p$ lie in $L^\infty(X\times X)$. Based on these facts, we can therefore define the kernel integral operators $P_{Q} : L^2(X,\mu ) \to L^2(X,\mu)$ and $P : L^2(X,\mu) \to L^2(X,\mu)$ from~\eqref{eqn:KOp} associated with the kernels $p_{Q}$ and $p$, respectively, and these operators are both Hilbert-Schmidt (see Section~\ref{secKernelChoice}). Note that $p$ and $P$ have analogous properties to those stated for $k_{\infty}$ and $K $ in Lemmas~\ref{lem:KInf}, \ref{lem:KL2C0} and Corollary~\ref{cor:commute}. In particular, $ p$ is invariant under $U^t \times U^t$, and $ P $ commutes with $ U^t $.

The operators $P_{Q}$ and $P$ can also be obtained directly from $K_{Q}$ and $K$, respectively, through the sequence of operations
\begin{equation} \label{eqn:def_KHP_Q}
\tilde{K}_{Q}f := K_{Q} \left( \frac{ f }{ K_{Q} 1_X } \right), \quad P_{Q}f = \frac{ \tilde{K}_{Q} f }{ \tilde{K}_{Q} 1_X }, \quad P f = \frac{ K f }{ K 1_X}.
\end{equation}
In \cite{BerrySauer16b}, the steps leading to $ \tilde{K}_{Q} $ from $ K_{Q} $ and to $ P_{Q} $ from $ \tilde{K}_{Q} $ are called right and left normalization, respectively. In the case of $P$, the effects of right normalization cancel since $ K 1_X $ is $ \mu $-a.e.\ constant by Corollary~\ref{cor:inf_delay}, so it is sufficient to construct this operator directly via left normalization of $ K$.

As is evident from~\eqref{eqn:def_KHP_Q}, $P_{Q}$ and $P$ are both Markov operators preserving constant functions. Moreover, for all $ x \in M $ we have $ \int_X p_{Q}( x, \cdot ) \, d\mu = 1 $, and for $\mu$-a.e.\ $ x \in M $, $ \int_X p( x, \cdot ) \, d\mu = 1 $, i.e., both $p_{Q}$ and $p$ are transition probability kernels. In particular, since $ X $ is compact and $ p_{Q} $ and $ p $ are essentially bounded below, $P_{Q}$ and $P$ are both ergodic Markov operators; that is, their eigenspaces at eigenvalue 1 are one-dimensional.

The Markov kernel $ p $ is $\mu$-a.e. symmetric by symmetry of $ k_{\infty} $ and the fact that $ \rho $ is $\mu$-a.e. constant. As a result, $ P $ is self-adjoint, its eigenvalues admit the ordering $ 1 = \lambda_0 > \lambda_1 \geq \lambda_2 \geq \cdots $, and there exists a real orthonormal basis $L^2(X,\mu) $ consisting of corresponding eigenfunctions, $ \phi_j $, with $ \phi_0 $ being constant. On the other hand, because $ p_{Q} $ is not symmetric, the operator $P_{Q}$ is not self-adjoint, but is nevertheless related to a self-adjoint operator via a similarity transformation by a bounded multiplication operator with a bounded inverse. To verify this, define
\[\tilde{\sigma}_{Q}=\sigma_{Q}/\rho_{Q},\quad \hat{\sigma}_{Q}= \sqrt{\sigma_{Q} \rho_{Q}},\]
where $\rho_{Q}$ and $\sigma_{Q}$ are as in Lemma~\ref{lem:Rho}. Let also $D_{Q} $ be the multiplication operator which multiplies by $\tilde{\sigma}_{Q}$, and $\hat{P}_{Q}$ the kernel integral operator with kernel $\hat p_{Q} : M \times M \to \real$
\begin{equation}
\label{eqPHat}
\hat p_{Q}(x,y) = \frac{k_{Q}(x,y)}{\hat{\sigma}_{Q}(x)\hat{\sigma}_{Q}(y)}.
\end{equation}
Observe now that $\hat{P}_{Q}$ is a symmetric operator, and $P_{Q}$ is related to $\hat{P}_{Q}$ via the similarity transformation 
\begin{equation}
\label{eqSimilarity}
\hat P_{Q} =D_{Q}^{1/2}P_{Q} D_{Q}^{-1/2}; 
\end{equation}
that is, for every $f\in L^2(X,\mu)$,
\[\begin{split}
D_{Q}^{1/2}P_{Q} D_{Q}^{-1/2} f(x) &= \int_{X} \sqrt{\frac{\sigma_{Q}(x)}{\rho_{Q}(x)}} \frac{k_{Q}(x,y)}{\sigma_{Q}(x) \rho_{Q}(y)} f(y) \sqrt{\frac{\rho_{Q}(y)}{\sigma_{Q}(y)}} \, d\mu(y)\\
& = \int_{X} \frac{k_{Q}(x,y)}{\hat{\sigma}_{Q}(x)\hat{\sigma}_{Q}(y)} f(y) \, d\mu(y) = \hat{P}_{Q}f(x).
\end{split}\]
The following are useful properties of $\hat{P}_{Q}$ that follow from its relation to $ P_{Q} $.
\begin{enumerate}
\item $\hat{P}_{Q}$ has the same discrete spectrum as $ P_{Q} $, consisting of eigenvalues $\lambda_{j,Q}$ with $1=\lambda_{0,Q} > \lambda_{1,Q} \geq \lambda_{2,Q} \geq \cdots$. 
\item Let $\phi_{j,Q}$ denote the eigenfunctions of $\hat{P}_{Q}$ corresponding to the nonzero eigenvalues $ \lambda_{j,Q} $. These form an orthonormal basis for the closed subspace $\overline{\ran\hat{P}_{Q} } = (\ker \hat{P}_{Q})^\bot$. Moreover, the $\phi_{j,Q}$ can be chosen to be real-valued.
\item The eigenfunction $ \phi_{0,Q} $ of $ \hat P_{Q} $ is equal up to proportionality constant to $\rho_{Q} \sigma^{1/2}_{Q}$. 
\end{enumerate}


\begin{rk*}
In applications, it may be the case that $ \rho_{Q} $ and $ 1/ \rho_{Q} $ take a large range of values. In such situations, it may be warranted to replace~\eqref{eqn:def:dQ} by a variable-bandwidth kernel of the form $k_{Q}(x,y) = \exp\left( - \frac{ d^2_Q( x, y ) }{ \epsilon r_{Q}( x ) r_{Q}(y) } \right)$, with a bandwidth function $ r_{Q} $ introduced so as to control the decay of the kernel away from the diagonal, $ x = y $. Various types of bandwidth functions have been proposed in the literature, including functions based on neighborhood distances \cite{ZelnikManorPerona04,BerrySauer16}, state space velocities \cite{GiannakisMajda12a,Giannakis15}, and local density estimates \cite{BerryHarlim16}. While we do not study variable bandwidth techniques in this work, our approach should be applicable in that setting too, so long as Corollary~\ref{cor:inf_delay} holds. 
\end{rk*}


\section{Proof of Theorems~\ref{thm:A}--\ref{thm:D} and Corollary~\ref{corSpectral}}\label{sect:proof_main}

\paragraph{Proof of Theorem~\ref{thm:A}.} That $ P $ and $ U^t $ commute follows from the invariance of $ p $ under $ U^t \times U^t $ and an analogous calculation to that in the proof of Corollary~\ref{cor:commute}. Next, as $Q\to\infty$, $p_{Q}$ converges to $p$ in any $L^p(X\times X,\mu\times\mu)$ norm with $ 1 \leq p < \infty $ by the analogous result to Lemma~\ref{lem:KInf}(i) that holds for these kernels (see Section~\ref{secMarkov}). In particular, that $p_{Q}$ converges to $p$ in $L^2(X\times X,\mu\times \mu) $ norm implies that $ P_{Q} $ converges to $ P$ in $L^2(X,\mu)$ operator norm, since $P_{Q} - P$ is Hilbert-Schmidt and thus bounded in operator norm by $\lVert p_{Q}- p \rVert_{L^2(X\times X)}$. \qed

\paragraph{Proof of Theorem~\ref{thm:B}.} We first establish that $ \tau $ is a.e.\ invariant under $ \Phi^t \times \Phi^t $. Since the integral operator $T$ commutes with $U^t$, for $\mu$-a.e.\ $x\in X$,
\begin{multline*}
\int_X \tau(\Phi^{t}(x),\Phi^{t}(y'))f(\Phi^{t}( y')) \, d\mu(y') = \int_X \tau(\Phi^{t}(x),y)f(y) \,d\mu(y)\\
= U^t Tf(x) = T(U^tf)(x) = \int_X \tau(x,y') f(\Phi^{t}( y')) \, d\mu(y'),
\end{multline*}
where the second equality was obtained by the change of variables $y= \Phi^t(y')$, and utilizes the invariance of the measure $\mu$ under $\Phi^t$. The only way the terms at the two ends of the equation can be equal for $\mu$-a.e.\ $x\in X$ is if $\tau(\Phi^{t}(x),\Phi^{t}(y'))$ = $\tau(x,y')$ $\mu$-a.e.

Next, observe that, by \eqref{eqn:L2_decomp}, the space $L^2(X\times X, \mu\times \mu)$ splits as the $U^t \times U^t $-invariant orthogonal sum of $\Disc\otimes\Disc$, $\Disc^\bot\otimes\Disc^\bot$, $\Disc^\bot\otimes\Disc$, and $\Disc\otimes\Disc^\bot$. Since $\tau$ is an $L^2$ kernel, it has orthogonal projections onto each of these subspaces, all of which are $ U^t \otimes U^t $-invariant by the invariance of $ \tau $ just established. By symmetry of $ \tau $, the projections onto $\Disc^\bot\otimes\Disc$ and $\Disc\otimes\Disc^\bot$ vanish. Moreover, the projection $ \tau_{\Disc^\perp \otimes \Disc^\perp} \in \Disc^\perp \otimes \Disc^\perp $ is orthogonal to constant functions, and it follows by the Birkhoff ergodic theorem that for $ \mu \times \mu $-a.e.\ $x, y \in X \times X $, 
\begin{align*}
0 &= \langle 1_{X \times X}, \tau_{\Disc^\perp \otimes \Disc^\perp} \rangle \\
&= \lim_{N\to\infty} \frac{1}{N} \sum_{n=0}^{N-1} \tau_{\Disc^\perp \otimes \Disc^\perp} ( \Phi^{n\,\Delta t} (x), \Phi^{n\,\Delta t} (y) ) \\
&= \lim_{N\to\infty} \frac{1}{N} \sum_{n=0}^{N-1} \tau_{\Disc^\perp \otimes \Disc^\perp} ( x, y ) \\
&= \tau_{\Disc^\perp \otimes \Disc^\perp} ( x, y ).
\end{align*}
This completes the proof of Claim~(i). The statements in Claim~(ii) that $\Disc^\bot \subset \ker(T)$ and that $\Disc$ and $ \Disc^\perp $ are invariant under $T$ are direct consequences of Claim~(i). 

The remaining two claims in the theorem, which requires that both $ \Disc $ and $ \ran T $ contain non-constant functions, can be proved by means of the following, slightly stronger, result.
\begin{prop} \label{prop:W_lambda}
For any nonzero eigenvalue $\lambda $ of $T$, the corresponding eigenspace $W_\lambda$ is invariant under the action of the Koopman generator $V$, and $V\rvert_{W_\lambda}$ is diagonalizable. Moreover, the constant function $1_X$ is an eigenfunction of $T$. If $ W_{\lambda} $ does not contain $ 1_X$, its dimension is an even number. 
\end{prop}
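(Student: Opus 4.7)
The plan is to exploit the commutation $[T,U^t]=0$, the compactness and self-adjointness of $T$, the ergodicity of $\Phi^t$, and the simplicity of Koopman eigenvalues for ergodic systems.

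First I would establish that each nonzero spectral subspace of $T$ is spanned by Koopman eigenfunctions. Commutation with $U^t$ gives $U^tW_\lambda\subseteq W_\lambda$ for every $t\in\real$. For $\lambda\neq 0$, compactness of $T$ forces $\dim W_\lambda<\infty$, and Theorem~\ref{thm:B}(ii) places $W_\lambda\subseteq\overline{\ran T}\subseteq\Disc$. The strongly continuous unitary group $\{U^t\rvert_{W_\lambda}\}_{t\in\real}$ on the finite-dimensional Hilbert space $W_\lambda$ is then automatically norm-continuous, so its generator is bounded and skew-adjoint; because the defining limit~\eqref{eqn:def_gen_flow} exists on all of $W_\lambda$, this generator coincides with $V\rvert_{W_\lambda}$. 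In particular $W_\lambda\subset D(V)$, $VW_\lambda\subseteq W_\lambda$, and $V\rvert_{W_\lambda}$ is diagonalizable with a basis $\{z_{\vec{a}_j}\}_{j=1}^{D}$ of genuine Koopman eigenfunctions of the ambient generator $V$, corresponding to eigenfrequencies $\omega_{\vec{a}_j}$.

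Next, to see that $1_X$ is an eigenfunction of $T$, I would apply the same commutation: $U^t(T\,1_X)=T(U^t\,1_X)=T\,1_X$, so $T\,1_X$ is $U^t$-invariant and hence equal $\mu$-a.e.\ to a constant by ergodicity, giving $T\,1_X=c\,1_X$ for some $c\in\real$.

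For the parity of $D$ when $1_X\notin W_\lambda$, the real-valuedness of $\tau$ implies that $T$ commutes with complex conjugation $f\mapsto f^*$, and since $\lambda\in\real$ by self-adjointness, $W_\lambda$ is closed under conjugation. Each basis element $z_{\vec{a}_j}$ satisfies $U^t z_{\vec{a}_j}^*=e^{-i\omega_{\vec{a}_j}t}z_{\vec{a}_j}^*$, and by simplicity of the Koopman eigenvalues for an ergodic system $z_{\vec{a}_j}^*$ must be proportional to $z_{-\vec{a}_j}$, so $z_{-\vec{a}_j}\in W_\lambda$. The hypothesis $1_X\notin W_\lambda$ excludes $\vec{a}_j=\vec{0}$, and so the multiset $\{\vec{a}_1,\ldots,\vec{a}_D\}$ partitions into disjoint conjugate pairs $\{\vec{a},-\vec{a}\}$ with $\vec{a}\neq-\vec{a}$, forcing $D$ to be even.

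The main obstacle is the careful identification of the generator of the finite-dimensional unitary group $U^t\rvert_{W_\lambda}$ with the restriction of the ambient Koopman generator $V$; this identification is what guarantees that the eigenvectors produced on $W_\lambda$ are Koopman eigenfunctions in the full ergodic sense, which is what allows the conjugation/pairing argument for the parity of $D$ and, via Proposition~\ref{prop:semi_conj_pi} together with \eqref{eqEigProd}, the torus factorization in Theorem~\ref{thm:B}(iii)--(iv).
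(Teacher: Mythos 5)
Your argument follows essentially the same route as the paper's: commutation of $T$ and $U^t$ makes each finite-dimensional $W_\lambda$ a $U^t$-invariant subspace, skew-adjointness of $V\rvert_{W_\lambda}$ gives diagonalizability with a basis of Koopman eigenfunctions, ergodicity forces the corresponding eigenfrequencies to be nonzero once $1_X\notin W_\lambda$, and reality of $T$ makes $W_\lambda$ conjugation-closed, pairing each $\zeta$ with the independent $\zeta^*$ to force even dimension. Your extra care in identifying the generator of the norm-continuous group $U^t\rvert_{W_\lambda}$ with the restriction $V\rvert_{W_\lambda}$ fills a step the paper leaves implicit.

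One expository caution: you phrase the pairing argument in terms of the multi-index labels $z_{\vec a_j}$ from~\eqref{eqEigProd}, which presupposes that the Koopman eigenvalue group is finitely generated. That structure is only established under Assumption~\ref{asmptn:C0_ae} (continuity of eigenfunctions), which is \emph{not} among the hypotheses of Theorem~\ref{thm:B} or Proposition~\ref{prop:W_lambda}; the proposition is meant to hold for any measurable ergodic flow and any real symmetric $L^2$ kernel commuting with $U^t$. Fortunately, nothing in your argument actually needs the $\vec a$-parameterization: for each basis eigenfunction $\zeta$ of $V\rvert_{W_\lambda}$ at frequency $\omega\ne0$, $\zeta^*$ lies in $W_\lambda$ (reality of $T$, reality of $\lambda$), is a Koopman eigenfunction at $-\omega$, and is independent of $\zeta$; simplicity of Koopman eigenvalues then makes $\zeta\mapsto\zeta^*$ a fixed-point-free involution on the basis, giving even $D$. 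Stating it this way matches the paper's proof and keeps the result at the intended level of generality.
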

\begin{proof}
Since $T$ is compact, every nonzero eigenvalue $\lambda$ has finite multiplicity and its corresponding eigenspace $W_{\lambda}$ has finite dimension, $ l = \dim W_{\lambda} $. Since $U^t $ commutes with $T$, $ U^t $ and hence $V$ leave $W_{\lambda}$ invariant. Similarly, since the constant function is an eigenfunction of $V$, it is an eigenfunction of $T$. 

Let $ \lambda_0 $ be the eigenvalue of $ T $ corresponding to the constant eigenfunction, and $\lambda \neq \lambda_0$ be any other eigenvalue of $ T $. Then, $V\rvert_{W_{\lambda}}$ is a skew-symmetric operator on a finite-dimensional space, and thus can be diagonalized with respect to a basis of simultaneous eigenfunctions of $T$ and $ V$. Fix any element $ \zeta $ of this basis. By our choice of $\lambda$, $\zeta$ is a non-constant eigenfunction of $V$, hence $\langle\zeta,1\rangle=0$. Therefore, by ergodicity of $ ( \Phi^t, \mu ) $, $V\zeta=i\omega\zeta$ for some $\omega\neq 0$. This implies that $\zeta$ has non-zero real and imaginary parts. Hence, the conjugate $\zeta^*$ is linearly independent from $\zeta$ and corresponds to eigenvalue $-i\omega$ of $V$. However, since $T$ is a real operator, $\zeta^*$ lies in $W_\lambda$. We therefore conclude that $W_\lambda$ can be split into disjoint 2-dimensional spaces spanned by the conjugate pair of eigenfunctions $\zeta$ and $\zeta^*$. Therefore $\dim W_{\lambda} $ is an even number. \qed
\end{proof}

\begin{cor}\label{corBlock}
\blue{The representation of $V\rvert_{\overline{\ran P}} $ in the basis $ \{ \phi_0, \phi_1, \ldots \} $ has a block-diagonal structure, consisting of even-sized blocks associated with the eigenspaces $ W_{\lambda \neq 1} $, and a $1\times 1$ block with the element 0, associated with $W_1$. Moreover, the range of $P$ lies in the domain of $V$, and $V\rvert_{\overline{\ran P}} $ and $P\rvert_{\overline{\ran P}} $ are simultaneously diagonalizable.} 
\end{cor}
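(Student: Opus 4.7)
The plan is to use Proposition~\ref{prop:W_lambda} as the local engine, since it already supplies a finite-dimensional, $V$-invariant, $V$-diagonalizable structure on each eigenspace $W_\lambda$ of $P$, and to assemble those local facts into the three global statements of the corollary.

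First I would invoke the spectral theorem for the compact self-adjoint operator $P$ to obtain the orthogonal Hilbert-space decomposition $\overline{\ran P} = \bigoplus_{\lambda \neq 0} W_\lambda$, each summand finite-dimensional, and order the basis $\{\phi_j\}$ so that consecutive indices exhaust one $W_\lambda$ before moving on to the next. Ergodicity of $P$ forces $W_1 = \spn\{1_X\}$ to be one-dimensional, and since $1_X$ is a Koopman eigenfunction at eigenfrequency $0$ we have $V\phi_0 = 0$; this is the $1\times 1$ zero block. For every other nonzero $\lambda$, Proposition~\ref{prop:W_lambda} delivers $W_\lambda \subset D(V)$, the invariance $V(W_\lambda) \subset W_\lambda$, and even-dimensionality of $W_\lambda$. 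The matrix element $\langle \phi_i, V\phi_j\rangle$ then vanishes whenever $\phi_i, \phi_j$ lie in distinct $P$-eigenspaces, by $V$-invariance of the latter and orthogonality of distinct eigenspaces of $P$, producing the claimed block-diagonal form with even blocks away from $W_1$.

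Next, for simultaneous diagonalizability, within each $W_\lambda$ the restriction $V\rvert_{W_\lambda}$ is skew-Hermitian on a finite-dimensional inner-product space, hence unitarily diagonalizable with purely imaginary eigenvalues; picking an orthonormal basis of joint eigenvectors within each $W_\lambda$ (automatically $P$-eigenvectors at $\lambda$) and taking the union over $\lambda \neq 0$ yields an orthonormal basis of $\overline{\ran P}$ that simultaneously diagonalizes $P\rvert_{\overline{\ran P}}$ and $V\rvert_{\overline{\ran P}}$.

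The hard step is establishing $\ran P \subset D(V)$, because an element $Pg$ is in general an infinite superposition of $P$-eigenfunctions whose associated $V$-eigenfrequencies can grow without obvious control. Spectrally, what is needed is the uniform bound $\sup_{\lambda \neq 0} |\lambda| \, \|V\rvert_{W_\lambda}\|_{\mathrm{op}} < \infty$, so that $\|VPg\|^2 = \sum_{\lambda,k} \omega_{\lambda,k}^2 \lambda^2 |\langle \zeta_{\lambda,k}, g\rangle|^2 \leq C\|g\|^2$ for every $g \in L^2(X,\mu)$. My plan is to obtain this bound from the representation $p = k_\infty/\rho$ of Section~\ref{secMarkov} together with the joint flow-invariance $(U^t\times U^t)p = p$ from Lemma~\ref{lem:KInf}(ii): flow-invariance lets one trade infinitesimal differences in the first slot of $p$ for differences in the second, and, combined with $p \in L^\infty(X\times X,\mu\times\mu)$, this suffices to show that the difference quotients $(U^h Pg - Pg)/h = P(U^h g - g)/h$ are Cauchy in $L^2(X,\mu)$ as $h \to 0$ for every $g \in L^2(X,\mu)$. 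Bounded extension of $VP$ then delivers $\ran P \subset D(V)$, after which the simultaneous diagonalization constructed above is internally consistent and yields the stated block-diagonal representation of $V\rvert_{\overline{\ran P}}$ in the basis $\{\phi_j\}$.
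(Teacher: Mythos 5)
Your decomposition of $\overline{\ran P}$ into the finite-dimensional eigenspaces $W_\lambda$, the extraction of the block-diagonal form, the $1\times 1$ zero block on $W_1 = \spn\{1_X\}$, and the construction of a joint $(V,P)$-eigenbasis all follow from Proposition~\ref{prop:W_lambda} exactly as the paper intends (the corollary is stated without a separate proof precisely because these structural facts are direct consequences of that proposition). You are also right to single out $\ran P \subset D(V)$ as the only genuinely non-trivial claim, and your reduction of it to the uniform bound $\sup_{\lambda\ne 0} |\lambda|\,\|V\rvert_{W_\lambda}\|_{\mathrm{op}} < \infty$ is correct; note in passing that the block structure and simultaneous diagonalizability already follow once each $W_\lambda$ separately lies in $D(V)$ and is $V$-invariant, so only the domain inclusion needs the uniform bound.

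The route you propose for that bound, however, does not go through. The identity $(U^h Pg - Pg)/h = P\bigl((U^h g - g)/h\bigr)$ is correct, and flow-invariance of $p$ does let you move a difference quotient from one argument of the kernel to the other, but neither observation yields convergence: for $g \notin D(V)$ the norm $\|(U^h g - g)/h\|_{L^2}$ diverges as $h\to 0$, and the only control available from $p \in L^\infty(X\times X,\mu\times\mu)$ is an estimate of the form $\|Pf\|_{L^2} \le \|p\|_{L^\infty}\|f\|_{L^2}$, which cannot convert a divergent family into a Cauchy one. On the kernel side, what you would actually need is a derivative of $p$ along the flow in one of its arguments, and essential boundedness gives no regularity of that kind; flow-invariance merely trades the difference quotient in $y$ for one in $x$ without making either converge.

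The bound you need is obtained by the paper elsewhere, namely in the proof of Theorem~\ref{thm:C} under Assumption~\ref{asmptn:C0_ae}. Once a joint $(V,P)$-eigenbasis $\{\phi_j\}$ with $V\phi_j = i\omega_j\phi_j$ and $P\phi_j = \lambda_j\phi_j$ is fixed (which your argument already supplies), one has $\omega_j = O(j)$ because, under the continuity assumption, the eigenfrequencies are integer combinations of finitely many rationally independent frequencies, and $\lambda_j = o(j^{-1})$ by the Ferreira--Menegatto estimate for $L^2$ kernels; together these give $|\lambda_j\omega_j| = o(1)$, hence bounded, and therefore $\ran P \subset D(V)$. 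So the gap in your proposal is the mechanism for the bound, not the identification of what is needed: replacing the $L^\infty$/flow-invariance argument with the spectral asymptotics from Theorem~\ref{thm:C}, and acknowledging the implicit reliance on Assumption~\ref{asmptn:C0_ae}, completes the proof.
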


By Proposition \ref{prop:W_lambda}, $U^t$ and $T$ have joint eigenfunctions, each of which factors the dynamics into a rotation on the circle in accordance with~\eqref{eqn:Def_koop_eigen}. According to Proposition~\ref{prop:semi_conj_pi}, any collection of $D$ such eigenfunctions factors the dynamics into a rotation on $\TorusD{D}$. This proves Claim~(iii).

To prove Claim (iv), we use~\eqref{eqEigProd} to expand the kernel as
\begin{displaymath}
\tau = \sum_{\vec a, \vec b \in \mathbb{Z}^m } \tilde \tau_{\vec a \vec b} z_{\vec{a}} \otimes z_{\vec{b}}.
\end{displaymath}
In this expansion, there is a minimal number $D\leq m $ of generating eigenfunctions $z_j$ from~\eqref{eqEigProd}, arranged without loss of generality as $ z_1, \ldots, z_D $, such that the expansion coefficients $ \tilde \tau_{\vec a \vec b} $ corresponding to $ \vec a = ( a_1, \ldots, a_m ) $ and $ \vec b = ( b_1, \ldots, b_m ) $ with nonzero $ a_{D+1}, \ldots, a_m $ and $ b_{D+1}, \ldots, b_m $, respectively, vanish (in other words, the kernel $ \tau $ does does not project onto the subspaces generated by $ z_{D+1},\ldots, z_m $ and their powers). By Proposition~\ref{prop:semi_conj_pi}, the Koopman eigenfunctions corresponding to non-vanishing $ \tilde \tau_{\vec a \vec b} $ can be expressed as $ z_{\vec a} = \zeta_{\vec a} \circ \pi $, where the $ \zeta_{\vec a } $ are smooth Koopman eigenfunctions on $ \mathbb{T}^D $ associated with an ergodic rotation. Thus, denoting the index set for the nonzero $ \tilde \tau_{\vec a \vec b} $ by coefficients by $ I \in \mathbb{Z}^m \times \mathbb{Z}^m $, we have $ \tau( x , y ) = \hat \tau( \pi( x ), \pi( y ) ) $ for $ \mu \times \mu $-a.e., $ (x,y) \in X \times X $, where $ \hat \tau $ is the $ L^2 $ kernel on $ \mathbb{T}^D $ given by 
\begin{displaymath}
\hat \tau = \sum_{\vec a, \vec b \in I} \tilde \tau_{\vec a \vec b} \zeta_{\vec a} \otimes \zeta_{\vec b}.
\end{displaymath}
This completes the proof of Claim (v) and of Theorem~\ref{thm:B}. \qed

\paragraph{Proof of Theorem~\ref{thm:D}.} That $p$ is uniformly continuous on a full-measure, dense subset of $X\times X $ follows from the analogous result to Lemma~\ref{lem:KInf}(iv), which holds for $p$ (see Section~\ref{secMarkov}). Claims~(i)--(iv) of the theorem follow analogously to Lemma~\ref{lem:KL2C0}. \qed

\paragraph{Rates of convergence in the continuous case.} As an auxiliary result, we state a lemma that establishes rates of convergence with respect to the number of delays $ Q $ of the kernel integral operators studied in this work. 

\begin{lem}[Convergence of commutators]\label{lem:cmmt_Koopman}
    Let the assumptions of Theorem~\ref{thm:D} hold, and the shape function $h$ from \eqref{eqKQ2} be continuously differentiable. Then, the following operators converge in $C^0(X)$ operator norm to $0$ as $Q\to\infty$, with rates given below:
\begin{enumerate}[(i)]
\item $\left \|U^{\Delta t} K_{Q} - K_{Q} U^{\Delta t}\right\| = O\left(Q^{-1}\right)$,
\item $\left\|U^{\Delta t} \tilde{K}_{Q} - \tilde{K}_{Q} U^{\Delta t}\right\| =O\left(Q^{-1}\right)$,
\item $\left\|U^{\Delta t} P_{Q} - P_{Q} U^{\Delta t}\right\| =O\left(Q^{-1}\right)$.
\end{enumerate}
\end{lem}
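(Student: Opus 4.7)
The driving observation is that $d_Q^2$ is almost invariant under the diagonal shift $\Phi^{\Delta t}\times\Phi^{\Delta t}$, with a defect of order $1/Q$. Indeed, substituting the definition~\eqref{eqn:def:dQ} yields the telescoping identity
\[
d_Q^2(\Phi^{\Delta t}(x),\Phi^{\Delta t}(y)) - d_Q^2(x,y) = \frac{1}{Q}\bigl[\|F(\Phi^{Q\Delta t}(x)) - F(\Phi^{Q\Delta t}(y))\|^2 - \|F(x) - F(y)\|^2\bigr],
\]
which is bounded on $X\times X$ by $4\|F\|_{C^0(X)}^2/Q$. Combined with the $C^1$, hence locally Lipschitz, regularity of $h$ --- used, as in the Gaussian case, in the form that $k_Q$ is a Lipschitz function of $d_Q^2$ on its bounded range --- this gives the key uniform bound
\[
\bigl\|k_Q\circ(\Phi^{\Delta t}\times\Phi^{\Delta t}) - k_Q\bigr\|_{C^0(X\times X)} = O(Q^{-1}).
\]

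For part (i), the change of variables $y\mapsto\Phi^{\Delta t}(y)$, using invariance of $\mu$, converts the commutator into a kernel difference:
\[
(U^{\Delta t}K_Q - K_Q U^{\Delta t})f(x) = \int_X \bigl[k_Q(\Phi^{\Delta t}(x),\Phi^{\Delta t}(y)) - k_Q(x,y)\bigr](U^{\Delta t}f)(y)\,d\mu(y).
\]
Taking the sup in $x$ and using $\|U^{\Delta t}f\|_{C^0}=\|f\|_{C^0}$ together with the previous display proves (i). For part (ii), set $\rho_Q = K_Q 1_X$ and observe $U^{\Delta t}\rho_Q - \rho_Q = (U^{\Delta t}K_Q - K_Q U^{\Delta t})1_X$, which is $O(Q^{-1})$ in $C^0$ by (i). Since $\rho_Q$ is bounded below on $X$ by Lemma~\ref{lem:Rho}, so is $U^{\Delta t}\rho_Q$, and therefore $\|1/(U^{\Delta t}\rho_Q) - 1/\rho_Q\|_{C^0(X)} = O(Q^{-1})$. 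Writing $\tilde K_Q f = K_Q(f/\rho_Q)$ and performing the same change of variables,
\begin{align*}
(U^{\Delta t}\tilde K_Q - \tilde K_Q U^{\Delta t})f(x) &= \int_X \bigl[k_Q(\Phi^{\Delta t}(x),\Phi^{\Delta t}(y)) - k_Q(x,y)\bigr]\frac{U^{\Delta t}f(y)}{U^{\Delta t}\rho_Q(y)}\,d\mu(y) \\
&\quad + \int_X k_Q(x,y)\Bigl(\frac{1}{U^{\Delta t}\rho_Q(y)} - \frac{1}{\rho_Q(y)}\Bigr)U^{\Delta t}f(y)\,d\mu(y),
\end{align*}
and each summand is $O(Q^{-1})\|f\|_{C^0}$, proving (ii).

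Part (iii) follows by repeating the left-normalization argument one more time. Writing $P_Q f = \tilde K_Q f/\sigma_Q$ with $\sigma_Q = \tilde K_Q 1_X$, apply (ii) to $1_X$ to get $\|U^{\Delta t}\sigma_Q - \sigma_Q\|_{C^0} = O(Q^{-1})$, then use the positive lower bound on $\sigma_Q$ from Lemma~\ref{lem:Rho} together with the analogous two-term splitting to conclude $\|[U^{\Delta t},P_Q]\|=O(Q^{-1})$.

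The main obstacle is the very first step: transferring the uniform $O(Q^{-1})$ bound on $d_Q^2$ to the kernel $k_Q = h\circ d_Q$ without incurring a square-root loss on the set where $d_Q$ itself is small (since $|a-b| \leq |a^2-b^2|/(a+b)$ degenerates there). The clean way around this is to exploit that the Gaussian shape function --- and every kernel covered by our applications --- expresses $k_Q$ as a $C^1$ function of the squared distance, so the Lipschitz estimate is applied directly to $d_Q^2$ rather than $d_Q$. The remainder of the proof is a systematic propagation of the resulting $C^0$ commutator bound through the right and left Markov normalizations, with Lemma~\ref{lem:Rho} supplying the uniform lower bounds needed to invert the normalizing factors.
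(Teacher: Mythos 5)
Your proof follows the same overall blueprint as the paper's: a telescoping identity showing $d_Q$ is nearly invariant under $\Phi^{\Delta t}\times\Phi^{\Delta t}$ with $O(Q^{-1})$ defect, a Lipschitz estimate on the shape function to transfer this to $k_Q$, then a change of variables using $\mu$-invariance to convert the commutator to a kernel difference, and finally propagation of the $C^0$ bound through the right and left normalizations using the uniform lower bounds on $\rho_Q$ and $\sigma_Q$ from Lemma~\ref{lem:Rho}. Claims (ii) and (iii), which the paper dispatches with ``similar manner,'' you write out explicitly, and your two-term splittings are correct.

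The place where you genuinely improve on the paper is the very first step, and you are right to flag it. The paper asserts the identity
\begin{displaymath}
d_{Q}( \Phi^{\Delta t }(x), \Phi^{\Delta t }(y)) = d_{Q}(x,y)+Q^{-1} \tilde{F}_{Q,\Delta t}(x,y), \quad \tilde{F}_{Q,\Delta t}(x,y) = \| F(x) - F(y) \| - \| F(\Phi^{Q\Delta t }x) - F(\Phi^{Q\Delta t }y) \|,
\end{displaymath}
and then applies a Lipschitz estimate for $h$ to $d_Q$. But the telescoping identity holds for $d_Q^2$, not $d_Q$; the displayed identity is false (e.g.\ take $Q=2$, $\|F(x)-F(y)\|=1$, $\|F(\Phi^{\Delta t}x)-F(\Phi^{\Delta t}y)\|=\|F(\Phi^{2\Delta t}x)-F(\Phi^{2\Delta t}y)\|=0$: the left side is $0$ but the right side is $1/\sqrt{2}+1/2$). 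If one corrects it to the $d_Q^2$ identity and then tries to deduce a bound on $d_Q$ via $|a-b| = |a^2-b^2|/(a+b)$, the best uniform estimate obtainable is $|a-b|\le\sqrt{|a^2-b^2|}=O(Q^{-1/2})$, because the denominator $a+b$ can be arbitrarily small. So the paper's argument, as written, supports only an $O(Q^{-1/2})$ rate. Your fix --- writing $k_Q$ as a $C^1$ function of the squared distance $d_Q^2$ and applying the Lipschitz estimate there --- recovers the stated $O(Q^{-1})$ rate cleanly. The cost is a slightly stronger hypothesis than ``$h\in C^1$'': what is really needed is that $u\mapsto h(\sqrt{u})$ be Lipschitz on the bounded range of $d_Q^2$, which holds for the Gaussian shape function $h(s)=e^{-s^2/\epsilon}$ and for any kernel defined directly as a $C^1$ function of the squared distance, but not for arbitrary $C^1$ $h$. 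It would be worth stating that extra hypothesis explicitly, since the lemma as phrased does not guarantee it.
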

\begin{proof} Let $ \tilde{F}_{Q,\Delta t}(x,y):= \left\| F(x) - F(y) \right\| - \left\| F(\Phi^{Q\, \Delta t }x) - F(\Phi^{Q\, \Delta t }(y)) \right\| $, and notice that by continuity of $ F $ and compactness of $X$ this quantity is bounded on $X\times X$. Note that (i) $ d_{Q}( \Phi^{\Delta t }(x), \Phi^{\Delta t }(y)) = d_{Q}(x,y)+Q^{-1} \tilde{F}_{Q,\Delta t}(x,y) $; and (ii) $h(u + \Delta u) = h(u) +\Delta u\, h'(u) + o(\Delta u)$, as $\Delta u\to 0$. Thus,
\[\begin{split}
k_{Q}(\Phi^{\Delta t}(x), \Phi^{\Delta t}(y)) &= h(d_{Q}( \Phi^{\Delta t }(x), \Phi^{\Delta t }(y))) = h(d_{Q}(x,y) + Q^{-1} \tilde{F}_{Q,\Delta t}(x,y)) \\
& = h(d_{Q}(x,y)) + O(Q^{-1}) =   k_{Q}(x,y) + O(Q^{-1}), 
\end{split}\]
where the estimate holds uniformly with respect to $x,y \in X$. Therefore, for every $ f\in L^2(X,\mu) $ and $ x \in X$ we have
\[\begin{split}
\nonumber U^{\Delta t}K_{Q} f(x) &= \int_{X} k_{Q}(\Phi^{\Delta t}(x),y)f(y)\, d\mu(y) \\
\nonumber &= \int_{X} k_{Q}(\Phi^{\Delta t}(x),\Phi^{\Delta t} (y))f(\Phi^{\Delta t} (y)) \, d\mu(y) \\
&= \int_{X} \left[k_{Q}(x,y) + O(Q^{-1}) \right] (U^{\Delta t}f)(y) \, d\mu(y). \\ 
\end{split}\]
Note that we have used the fact that $ \mu $ is an invariant measure in the second-to-last line. Since $ k_{Q} $ is continuous, it follows from the Cauchy-Schwarz inequality that $ \|K_{Q}f\|_{C^0} \leq \|K_{Q}\|_{C^0}\|f\|_{L^2} $. Substituting this result in the right-hand side and taking the supremum over $x\in X$ yields
\[\left\|(U^{\Delta t}K_{Q} - K_{Q} U^{\Delta t} ) f \right\|_{C^0} = O(Q^{-1}) \left\|f \right\|_{L^2}.\]
Claim~(i) then follows from the fact that $ \lVert \cdot \rVert_{L^2} \leq \lVert \cdot \rVert_{C^0} $. Claims~(ii) and~(iii) can be proved in a similar manner. \qed
\end{proof}

\section{Galerkin approximation of Koopman eigenvalue problems} \label{sect:Galerkin}

In this section, we formulate a Galerkin method for the eigenvalue problem of the Koopman generator $ V $ in the eigenbasis of $ P$, under the implicit assumption that the latter operator is available to us from $P_{Q}$ after having taken a large number of delays $Q$. The task of finding the eigenvalues of $V$ has two challenges, namely, (i) $V$ is an unbounded operator defined on a proper subspace $ D( V ) \subset L^2(X,\mu)$ which is not known a priori; (ii) the spectrum of $V$ could be dense in $i \real$ (even for a pure point spectrum system such an ergodic rotation on $ \mathbb{ T }^D $ with $ D \geq 2 $; e.g.,~\cite{Giannakis17}, Remark~8), in which case, solving for its eigenvalues is a numerically ill-posed problem. Following \cite{GiannakisEtAl15,Giannakis17}, we will address these issues by employing a Galerkin scheme for the eigenvalue problem of $ V $, with a small amount of judiciously constructed diffusion added for regularization. Throughout this section, we consider that Assumptions~\ref{asmptn:standing}, \ref{asmptn:ker}, and \ref{asmptn:C0_ae} hold. Further, we assume the following.
\begin{Assumption}\label{asmptn:SPD}
The kernels $k_Q$, and thus $k_\infty$, are symmetric positive-definite. That is, (i) $k_Q(x,y) $, for every $x,y \in M$; (ii) for every $x_0, x_1, \ldots, x_n \in M $ and $ c_0, c_1, \ldots, c_n \in \mathbb{C} $, $ \sum_{i,j=0}^{n-1} c_i^* k_Q(x_i,x_j) c_j \geq 0$; and (iii) the analogous conditions hold for $k_\infty$.
\end{Assumption}
Our approach has the following steps.

\paragraph{Step 1. Sobolev spaces.} We first construct subspaces of $L^2$ in which we search for eigenfunctions. These spaces will be shown to be dense in $\Range $, defined as the closed subspace of $ \overline{\ran P}$ orthogonal to constant functions (that is, $ \Range $ only consists of zero-mean functions). Note that $ \{ \phi_j \}_{j \in J} $, where $J $ is an index set for the nonzero eigenvalues $\lambda_j$ of $P$, strictly less than 1, is an orthonormal basis of $H$. For any $ p \geq 0 $, we define
\begin{equation}\label{eqHP}
H^p = \left \{\sum_{j\in J} c_j\phi_j \in\Range : \sum_{j\in J} \lvert c_j \rvert^2 \lvert \eta_j\rvert ^{p}<\infty \right\}, \quad \eta_j = ( \lambda^{-1}_j - 1) / ( \lambda_1^{-1} - 1 ).
\end{equation}

The spaces $H^p $ are analogous to the usual Sobolev spaces associated with self-adjoint, positive semidefinite, unbounded operators with compact resolvents and discrete spectra (here, $\{ \eta_j \}_{j\in J} $). In particular, when $(X,g)$ is a smooth Riemannian manifold with a metric tensor $ g $ satisfying $ \vol_g = \mu $, and $ ( \eta_j, \phi_j ) $ are the eigenvalues and orthonormal eigenfunctions of the corresponding Laplace-Beltrami operator, then $ H^p $ becomes the canonical Sobolev space $ H^p( X, g) $, restricted to be orthogonal to constant functions. $H^p$ from~\eqref{eqHP} is a Hilbert space with the inner product
\[\langle f,g \rangle_{H^p}:= \sum_{q=0}^{p} \sum_{j \in J} c_j^* d_j\lvert \eta_j \rvert^{q}, \]
where $ f=\sum_{j \in J}c_j\phi_j $ and $ g=\sum_{j \in J}d_j\phi_j $. Moreover, $\{ \phi_j^{(p)} \}_{j\in J} $ with $ \phi_j^{(p)} = \phi_j/ \lVert \phi_j \rVert_{p}$, $\lVert \phi_j \rVert_{H^p}^2 = \sum_{q=0}^p \lambda_j^q$, forms an orthonormal basis of $H^p$. 

\begin{prop}\label{prop:e}
For every $p>0$, the space $H^p$ is dense in $\Range$ and moreover, the inclusion map $H^p\to\Range$, and thus $H^p \to L^2(X,\mu) $, is compact.
\end{prop}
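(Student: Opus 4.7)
The plan is to derive both claims directly from the orthonormal-basis description of $\Range$ together with the single analytic input that the weights $\eta_j$ blow up as $j\to\infty$. Since $P$ is compact, its nonzero eigenvalues $\lambda_j$ can accumulate only at $0$, so $1/\lambda_j\to\infty$ and hence $\eta_j=(\lambda_j^{-1}-1)/(\lambda_1^{-1}-1)\to\infty$; this is the only property of the spectrum of $P$ we will need, and everything else is bookkeeping in the $\phi_j$ coordinates.

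For density, I would simply note that the algebraic span $\mathcal{S}:=\spn\{\phi_j:j\in J\}$ of finitely supported linear combinations is automatically contained in $H^p$, because a finite sum has only finitely many nonzero coefficients and the series $\sum_{j\in J}|c_j|^2|\eta_j|^p$ then terminates. Since $\{\phi_j\}_{j\in J}$ is an orthonormal basis of $\Range$, $\mathcal{S}$ is $L^2$-dense in $\Range$, and thus $H^p\supseteq\mathcal{S}$ is $L^2$-dense in $\Range$.

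The substantive step is compactness of the inclusion $\iota:H^p\to\Range$. Given a sequence $\{f_n\}$ in $H^p$ with $\|f_n\|_{H^p}\le M$, I would expand $f_n=\sum_{j\in J}c_{n,j}\phi_j$, giving $\sum_{j\in J}|c_{n,j}|^2|\eta_j|^p\le M^2$, and combine two observations. First, uniform $\ell^2$-tightness of the tails: for any $\epsilon>0$ one may choose $N$ with $\max_{j>N}|\eta_j|^{-p}<\epsilon/M^2$, giving $\sum_{j>N}|c_{n,j}|^2<\epsilon$ for every $n$. Second, coordinate-wise boundedness, since $|c_{n,j}|\le M|\eta_j|^{-p/2}$ for each fixed $j$; a Cantor-diagonal extraction then yields a subsequence along which $c_{n,j}\to c_j$ for every $j\in J$. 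Setting $f:=\sum_{j\in J}c_j\phi_j$, which lies in $H^p$ by Fatou, and decomposing $\|f_n-f\|_{L^2}^2=\sum_{j\le N}|c_{n,j}-c_j|^2+\sum_{j>N}|c_{n,j}-c_j|^2$, one concludes $f_n\to f$ in $L^2$: the head vanishes in the limit $n\to\infty$ by coordinate convergence, and the tail is controlled uniformly by the tightness bound applied to both $\{f_n\}$ and $f$.

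The only real point of care is making the tightness estimate uniform in $n$, which is exactly what the $H^p$-norm bound delivers; the rest is routine. Since $\Range$ is a closed subspace of $L^2(X,\mu)$, the inclusion $\Range\hookrightarrow L^2(X,\mu)$ is isometric, so compactness of $H^p\to L^2(X,\mu)$ follows at once from compactness of $\iota$ by composition. In particular, no regularity assumption on $X$ is invoked beyond compactness of $P$, so the argument applies uniformly to all $p>0$ for which the weighted sum in the definition of $H^p$ is meaningful.
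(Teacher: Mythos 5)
Your proof is correct and rests on the same essential observation as the paper's, namely that compactness of $P$ forces the weights $\eta_j^{-p/2}$ to decay to zero, and that the embedding $H^p\hookrightarrow\Range$ is diagonal in the $\phi_j$ basis with exactly these entries. Where the paper simply invokes the standard criterion that a diagonal operator with vanishing diagonal is compact (and cites \eqref{eqn:IntOp_eigen_decay}, proved later in Theorem~\ref{thm:C}, for the decay), you reprove that criterion from scratch by the uniform-tail-tightness plus diagonal-extraction argument. The one thing your version buys is a cleaner accounting of hypotheses: you only need $\lambda_j\to 0$, hence $\eta_j\to\infty$, which is an immediate consequence of $P$ being compact and infinite-rank, whereas \eqref{eqn:IntOp_eigen_decay} encodes a quantitative rate $\eta_j^{-1}=o(j^{-1})$ that relies on the Ferreira--Menegatto eigenvalue bound and on Assumption~\ref{asmptn:C0_ae}, neither of which is actually required here; your argument also avoids a forward reference. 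Both routes are valid, and either way the density claim and the final composition with the isometric inclusion $\Range\hookrightarrow L^2(X,\mu)$ are handled identically.
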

\begin{proof}
To see that $H^p$ is dense, note that $H^p$ includes all finite linear combinations of the $\phi_j$. Since the $\phi_j$ are an orthonormal basis of $\Range$, these finite linear combinations are dense in $\Range$. Next, the embedding of $H^p$ in $\Disc$ can be represented by a diagonal operator $G : H^p \to \Range$ such that $G_{jj} := \langle \phi_j, G \phi_j^{(p)} \rangle = \eta_j^{-p/2}$. This operator is compact iff $ G_{jj} $ converges to $0$ as $ j \to \infty$. This is true by \eqref{eqn:IntOp_eigen_decay} below. The compactness of the inclusion $ H^p \to L^2(X,\mu) $ follows immediately. \qed
\end{proof}


\paragraph{Step 2. Regularized generator. } For every $\theta>0$, we define the unbounded operators $\Delta : D(\Delta ) \to H $ and $ L_{\theta} : D( L_{\theta}) \to H $, \blue{where $ D(\Delta) = D(L_\theta) \subset D(V) $}, and
\begin{equation}\label{eqn:regularize}
    \Delta := f\mapsto \sum_{k=1}^{\infty} \eta_j \langle \phi_j, f \rangle \phi_j, \quad L_{\theta} := V\rvert_{D(\Delta)} - \theta \Delta. 
\end{equation} 
As we will see in Step~3 below, the role of the diffusion term $\theta \Delta $ is to penalize the eigenfunctions of $V$ with large eigenvalues of a Dirichlet energy functional. Theorem~\ref{thm:C} below identifies a domain in which the operators in \eqref{eqn:regularize} are continuous, and establishes that the eigensolutions of $L_{\theta}$ converge to eigensolutions of $V$ as $\theta\to 0$. 
\begin{thm}\label{thm:C}
Viewed as operators from $H^2 $ to $H $, the generator $V$, as well as the operators $L_{\theta}$ and $\Delta $ from \eqref{eqn:regularize}, are bounded. In particular, we can set $ D( \Delta ) = D( L_{\theta} ) = H^2 $. \blue{Finally, for every eigenvalue $ i \omega $ of $ V $, whose corresponding eigenspace lies in $H^2$, there exists an eigenvalue $\eta$ of $\Delta$ such that the smooth curve $\theta \mapsto \gamma_{\theta} := i \omega - \theta \eta$ consists of eigenvalues $\gamma_{\theta}$ of $L_{\theta}$, converging to $i\omega$ as $\theta \to 0^+$.}
\end{thm}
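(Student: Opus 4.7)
}

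The plan is to exploit the joint eigenbasis of $P$, $V$, and $\Delta$ on $\overline{\ran P}$, provided by Corollary~\ref{corBlock} and Proposition~\ref{prop:W_lambda}, and then establish each claim of the theorem as a reduction to the spectrum along this basis. First I would verify that $\Delta : H^2 \to H$ is bounded: for $f = \sum_{j \in J} c_j \phi_j \in H^2$, the definition gives $\Delta f = \sum_j \eta_j c_j \phi_j$, so $\|\Delta f\|_H^2 = \sum_j \eta_j^2 |c_j|^2 \leq \sum_j (1 + \eta_j + \eta_j^2) |c_j|^2 = \|f\|_{H^2}^2$. Hence $\|\Delta\|_{H^2 \to H} \leq 1$, and by construction we may take $D(\Delta) = H^2$.

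Next I would establish the boundedness of $V : H^2 \to H$ through the block structure. By Corollary~\ref{corBlock}, the basis $\{\phi_j\}$ diagonalizes $P$ and places $V$ in block form, with each finite-dimensional $V$-invariant block supported on an eigenspace $W_{\lambda_j}$, where $V|_{W_{\lambda_j}}$ is skew-symmetric with operator norm equal to the largest Koopman eigenfrequency $|\omega|$ in that block. The essential estimate is $\|V|_{W_{\lambda_j}}\| \leq C(1 + \eta_j)$ uniformly in $j$; granted this, decomposing $f = \sum_j f_j$ with $f_j \in W_{\lambda_j}$ yields
\[
\|Vf\|_H^2 = \sum_j \|V f_j\|_H^2 \leq C^2 \sum_j (1+\eta_j)^2 \|f_j\|_H^2 \leq C' \|f\|_{H^2}^2.
\]
To justify the key estimate, I would pull the problem back to the torus factor via Theorem~\ref{thm:B}(iv)--(v): the integral kernel becomes a continuous, symmetric, positive-definite kernel $\hat\tau$ on $\mathbb{T}^D$ (invoking Assumption~\ref{asmptn:SPD} together with continuity from Assumption~\ref{asmptn:C0_ae}). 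The Fourier decomposition of $\hat\tau$ into torus characters $\zeta_{\vec a}$ simultaneously diagonalizes $P$ and $V$, with $P$-eigenvalues given by Fourier coefficients of $\hat\tau$ and $V$-eigenfrequencies linear in $\vec a$. Mercer's theorem and the Fourier-type decay estimates tie the decay of $P$-eigenvalues to the growth of the associated frequencies, which after reindexing delivers the required polynomial bound. The boundedness of $L_\theta = V - \theta \Delta : H^2 \to H$ then follows by the triangle inequality.

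For the eigenvalue-curve claim, I would combine simultaneous diagonalization with a direct calculation. Let $i\omega$ be an eigenvalue of $V$ whose eigenspace lies in $H^2$; since $V$-eigenvalues are simple under Assumption~\ref{asmptn:C0_ae} (as reviewed in Section~\ref{sect:theory}), the eigenspace is spanned by a joint eigenfunction $z$ contained in a single block $W_\lambda$, with $P z = \lambda z$, $V z = i\omega z$, and $\Delta z = \eta z$ where $\eta = (\lambda^{-1} - 1)/(\lambda_1^{-1} - 1)$. Applying $L_\theta$ to $z$ gives $L_\theta z = V z - \theta \Delta z = (i\omega - \theta \eta) z$, so $\gamma_\theta := i\omega - \theta \eta$ is an eigenvalue of $L_\theta$ for every $\theta > 0$. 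The map $\theta \mapsto \gamma_\theta$ is affine in $\theta$, hence smooth, and $\gamma_\theta \to i\omega$ as $\theta \to 0^+$.

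The main obstacle is the quantitative estimate $\|V|_{W_{\lambda_j}}\| = O(\eta_j)$ underpinning the boundedness of $V$. This is the only non-algebraic ingredient of the proof: without control on the growth of Koopman frequencies relative to the decay of $P$-eigenvalues, $V$ could fail to be bounded on $H^2$. The argument must exploit the regularity afforded by Assumption~\ref{asmptn:SPD} and the continuity in Assumption~\ref{asmptn:C0_ae}, together with the torus factorization from Theorem~\ref{thm:B}; the remaining claims reduce to straightforward computation once the simultaneous eigenbasis is in hand.
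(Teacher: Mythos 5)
Your proposal follows the same architecture as the paper's proof: pass to a simultaneous eigenbasis of $P$, $V$, and $\Delta$ via Corollary~\ref{corBlock}; observe that $\Delta$ boundedness is immediate; reduce $V$ boundedness to a quantitative comparison between Koopman frequencies and the $\eta_j$; and obtain the eigenvalue curve by the one-line computation $L_\theta z = (i\omega - \theta\eta)z$. The last two steps match the paper's essentially verbatim, and your treatment of $\Delta$ is fine.

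The divergence is in how the central estimate $\omega_j = O(\eta_j)$ is justified, and there you have a genuine gap. The paper combines two elementary facts: (a) under Assumption~\ref{asmptn:C0_ae} the eigenfrequencies are integer combinations of finitely many generators, so a counting argument gives $\omega_j = O(j)$; and (b) since the kernel $p$ is in $L^2(X\times X,\mu\times\mu)$, the Ferreira--Menegatto result (\cite{EigenIntOp2}, Cor.~2.5) gives $\lambda_j = o(j^{-1})$, hence $\eta_j^{-1} = o(j^{-1})$ and $j = o(\eta_j)$. Combining yields $\omega_j \le C\eta_j$ without any need for the torus factorization. You instead propose to pull everything back to $\mathbb{T}^D$ via Theorem~\ref{thm:B}(iv)--(v) and invoke Mercer's theorem plus "Fourier-type decay estimates," but this is a sketch rather than a proof: you would first need to argue that $\hat\tau$ (a priori only an $L^2$ kernel on $\mathbb{T}^D\times\mathbb{T}^D$) admits a continuous representative so Mercer applies, and then actually extract a bound of the form $\omega_j = O(\eta_j)$ from trace-class summability, which is not automatic because the indexing by decreasing $\lambda_j$ need not be aligned with increasing $|\vec a|$. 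You yourself flag this as "the main obstacle," and it is: without it, the boundedness of $V$ on $H^2$ is unproven. The paper's route is both shorter and more self-contained, since it avoids the torus construction entirely and relies on a standard decay rate for eigenvalues of Hilbert--Schmidt integral operators.

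One small additional remark: the paper's proof of the curve statement does not need to invoke simplicity of Koopman eigenvalues at all, since it applies $L_\theta$ directly to basis vectors $\phi_j$; your appeal to simplicity is harmless but unnecessary.
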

\begin{proof} 


    First, by Corollary~\ref{corBlock}, we can consider that the basis $ \{ \phi_j \}_{j\in J} $ of $H$ consists of simultaneous eigenfunctions of $V$ and $P$ (and thus $\Delta$), without loss of generality. Then, to verify that $V$ is a bounded operator on $H^2$, first observe that $ \omega_j = O(j) $, which follows from the fact that the eigenvalues of $V$ are integer linear combinations of finitely many rationally independent frequencies (by Assumption~\ref{asmptn:C0_ae}; see also Section~\ref{sect:theory}). By Theorem \ref{thm:D}, the kernel $p$ associated with $P $ is $L^2$ integrable, and thus by a result of Ferreira and Menegatto on integral operators (\cite{EigenIntOp2}, Corollary~2.5), $\lambda_j=o(j^{-1}) $. Combining these estimates, we obtain
\begin{equation}\label{eqn:IntOp_eigen_decay} 
j=o(\eta_j), \quad \omega_j=o(\eta_j),\quad \eta_j^{-1} = o(j^{-1}),
\end{equation}
and therefore deduce that there exists a constant $C>0$ such that 
\begin{equation}
\label{eqOmegaEta}
\omega_j \leq C \eta_j; \quad \forall j\in J.
\end{equation}
Hence, for $f=\sum_{j \in J}c_j \phi_j\in H^2$,
\[\left \lVert Vf\right\rVert^2 = \left\lVert \sum_{j \in J}c_j V \phi_j\right\rVert^2 = \left\lVert\sum_{j \in J} ic_j \omega_j\phi_j\right\rVert^2 \leq C^2\sum_{j \in J} \lvert c_j \rvert^2 \lvert \eta_j|^2 \leq C^2\lVert f\rVert^2_{H^2},\]
proving that $V$ is a bounded operator on $H^2$. The same reasoning applies for $L_{\theta}$ and $\Delta $.
Finally, to establish convergence of the eigenvalues of $L_\theta $ to those of $V\rvert_{H^2} $, let $i \omega_j $ be the eigenvalue of $V$ corresponding to $\phi_j$. Then, by definition of $ L_\theta $ and the basis $\{ \phi_j \}_{j\in J}$, 
\begin{displaymath}
L_{\theta} \phi_j = V \phi_j - \theta \Delta \phi_j = ( i\omega_j-\theta\eta_j) \phi_j, 
\end{displaymath}
and the claim follows immediately. This completes the proof of Theorem~\ref{thm:C}. 
\end{proof}
\begin{rk*} Theorem~\ref{thm:C} establishes that $H^2$ is a domain on which $V$ is a bounded operator, but if $X$ had a smooth manifold structure, it is possible to show that the standard $H^1 $ Sobolev space associated with a Riemannian metric on $ X $ is also a suitable domain. In this work, $X$ has no smooth structure, and we can state Theorem~\ref{thm:C} above only for $ V\rvert_{H^2}$. In separate calculations, we have observed that an analog of the weak eigenvalue problem for $ L_\theta $ formulated in $ H^1 \times H^1 $ actually performs well numerically.
\end{rk*} 

\paragraph{Step 3. Galerkin method.} By virtue of Theorem~\ref{thm:C}, the eigenvalues of $L_{\theta}$ can be considered to be approximations of the eigenvalues of $V$. We will take the Galerkin approach in finding the eigenvalues of $L_{\theta}$ by solving for $z\in H^2$ and $\gamma\in\cmplx$ in the following variational (weak) eigenvalue problem:
\begin{defn}[Regularized Koopman eigenvalue problem]\label{defGalerkin}
Find $ \gamma \in \mathbb{ C } $ and $ z \in H^2 $ such that for all $ f \in H $, 
\begin{displaymath}
A( f, z ) = \gamma \langle f, z \rangle,
\end{displaymath}
where $ A : H \times H^2 \to \mathbb{C } $ is the sesquilinear form defined by 
\begin{displaymath}
A( g, f ) = \langle g, L_{\theta} f \rangle = \langle g, Vf \rangle - \theta E(g,f), \quad E( g,f ) = \langle g, \Delta f \rangle.
\end{displaymath} 
\end{defn}

In the above, the form $E:H \times H^2 \to \mathbb{C} $ induces a Dirichlet energy functional $ E( f ) = E( f, f) $, $ f \in H^2 $, providing a measure of roughness of functions in $ H^2 $. In particular, if $ X $ were a smooth Riemannian manifold, and the \blue{$ ( \eta_j, \phi_j ) $ were set to Laplace-Beltrami eigenvalues and eigenfunctions, respectively,} we would have $ E( f ) = \int_X \lVert \grad f \rVert^2 \, d\mu $. While the lack of smoothness of $ X $ in our setting precludes us from defining $ E $ by means of a gradient operator, its definition in terms of the $\eta_j $ from \eqref{eqHP} still provides a meaningful measure of roughness of functions. For instance, it follows from results in spectral graph theory that the variance of estimates $ \eta_j^{(N)} $ of the $ \eta_j $ computed from finite data sets (e.g., as described in Section~\ref{sect:numerics} ahead) increases with $ k $ \cite{VonLuxburgEtAl08,BerrySauer16}, which is consistent with the intuitive expectation that rough (highly oscillatory) functions require larger numbers of samples for accurate approximations. 

Following \cite{GiannakisEtAl15,Giannakis17}, we will order all solutions $ ( \gamma_j, z_j ) $ of the problem in Definition~\ref{defGalerkin} in order of increasing Dirichlet energy $ E( z_j ) $. Since $ A( f, f ) = - \theta E( f, f) $ by skew-symmetry of $ V $, we can compute the Dirichlet energy of eigenfunction $z_j$ directly from the corresponding eigenvalue, viz.\ $ E( z_j ) = - \Real \gamma_j / \theta $. Similarly, we have $ \omega_j =\Imag \gamma_j $. By~\eqref{eqOmegaEta}, there exist constants $C_1, C_2>0$ such that 
\begin{equation}\label{eqn:theta_eta_ratio_bound}
C_2 \leq \frac{|i\omega_j-\theta \eta_j|}{|\eta_j|} \leq C_1,\ \forall j\in J.
\end{equation}

To justify the well-posedness of the eigenvalue problem in Definition~\ref{defGalerkin}, we will state three important properties of $ A $, namely, 
\begin{equation}\label{eqn:E_bounded_H1}
|A(u,v)|\leq C_1 \|u\|_{H}\|v\|_{H^2}, \quad \forall u\in H, \quad \forall v\in H^2,
\end{equation}
\begin{equation}\label{eqn:sup_A_H}
\sup_{\substack{f\in H\\ \|f\|_{H}=1}}|A(f,v)| \geq C_2\|v\|_{H^2}^2,\ \forall v\in H^2,
\end{equation}
\begin{equation}\label{eqn:sup_A_H2}
\sup_{\substack{g\in H^2\\ \|g\|_{H^2}=1}}|A(u,g)| \geq C_2 \|u\|_{H}^2,\ \forall u\in H.
\end{equation}
We now give brief proofs of these results. In the following, $ v=\sum_{j \in J}d_j \phi_j $ and $u=\sum_{j \in J}c_j\phi_j $ will be arbitrary functions in $ H^2 $ and $ H $, respectively. Moreover, as in the proof of Theorem~\ref{thm:C}, we will assume that the basis $\{ \phi_j \}_{j\in J}$ consists of simultaneous eigenfunctions of $V$ and $\Delta$. First, note that,
\begin{displaymath}
\left| A(u,v) \right| =\left| \sum_{j \in J}(i\omega_j-\theta\eta_j) c_j^* d_j \right| \leq \sum_{j \in J}|i\omega_j-\theta\eta_j | |c_j^* d_j|.
\end{displaymath}
By the Cauchy-Schwartz inequality on $\ell^2 $ and \eqref{eqn:theta_eta_ratio_bound},
\begin{displaymath}
\left| A(u,v) \right| \leq C_1\sum_{j \in J}\lvert\eta_j \rvert \lvert c^*_j d_j \rvert \leq C_1 \lVert u\rVert_{H}\lVert v\rVert_{H^2},
\end{displaymath}
proving~\eqref{eqn:E_bounded_H1}. To prove \eqref{eqn:sup_A_H}, let $f=\sum_{j\in J}a_j\phi_j\in H$. Then, the left-hand side of that equation becomes $\sum_{j \in J}(i\omega_j/\eta_j-\theta) \eta_j a_j^* d_j$. Let $R_j := i\omega_j/\eta_j-\theta $, where $|R_j|\geq C_2$ by~\eqref{eqn:theta_eta_ratio_bound}, By the Cauchy-Schwarz inequality, under the constraint $\sum_{j \in J}|a_j|^2=1$, the sum $\left| \sum_{j \in J} a_j^* \eta_jd_j \right|$ attains the maximum value of $\sum_{j \in J}|\eta_j^2 d_j|^2$. Therefore,
\[\sup_{\substack{f\in H\\ \|f\|_{H}=1}} \left| A(f,v) \right| = \sup_{\sum_{j \in J} |a_j|^2=1} \left| \sum_{j \in J} a_j^* d_j R_j \eta_j \right| \geq C_2 \sum_{j \in J} |\eta_j d_j|^2 = C_2 \|v\|^2_{H^2}.\]
This proves \eqref{eqn:sup_A_H}. The proof of \eqref{eqn:sup_A_H2} is similar to that of \eqref{eqn:sup_A_H}, with $f$ replaced by a trial function $g=\sum_{j \in J}b_j \phi_j\in H^2$ and the constraint $\|g\|_{H^2}^2 = \sum_{j \in J}|b_j|^2 = 1$. A direct consequence of \eqref{eqn:sup_A_H} and \eqref{eqn:sup_A_H2} is,
\begin{equation}\label{eqn:inf_sup_A}
\inf_{\substack{v\in H^2\\ \|v\|_{H^2}=1}} \sup_{\substack{u\in H\\ \|u\|_{H}=1}}|A(u,v)| \geq C_2, \quad \inf_{\substack{u\in H\\ \|u\|_{H}=1}} \sup_{\substack{v\in H^2\\ \|v\|_{H^2}=1}} |A(u,v)| \geq C_2.
\end{equation}

Equations \eqref{eqn:E_bounded_H1}, \eqref{eqn:sup_A_H}, \eqref{eqn:inf_sup_A}, and the compact embedding of $H^2$ in $\Range$ by Proposition~\ref{prop:e} together guarantee that the eigenvalues of $ A $ restricted to the finite-dimensional subspaces of $ H \times H^2 $ spanned by the leading $ m $ eigenfunctions $ \phi_1, \ldots, \phi_{m} $ converge, as $ m \to \infty $, to the weak eigenvalues of $L_{\theta}$. See \cite{BabuskaOsborn91}, Section~8, for an exposition on this classic result. The resulting finite-dimensional Galerkin approximations of the weak eigenvalue problem for $L_\theta$ can be summarized as follows: 
\begin{defn}[Koopman eigenvalue problem, Galerkin approximation] \label{defGalerkin2}Set $ \tilde H_m = \spn \{ \phi_1, \ldots, \phi_m \} $ and $\tilde H^2_m = \spn\{ \phi^{(2)}_1, \ldots, \phi^{(2)}_m \} $, $m \geq 1$. Then, find $ \gamma \in \mathbb{ C } $ and $ z \in \tilde H^2_m $ such that for all $ f \in \tilde H_m $, 
\begin{displaymath}
A( f, z ) = \gamma \langle f, z \rangle,
\end{displaymath}
where the sesquilinear form $ A : H \times H^2 \to \mathbb{C } $ is as in Definition~\ref{defGalerkin}. 
\end{defn}

This problem is equivalent to solving a matrix generalized eigenvalue problem
\begin{equation}
\label{eqGEV}
\boldsymbol{A} \vec c = \lambda \boldsymbol{ B} \vec c,
\end{equation}
where $ \boldsymbol{ A }$ and $ \boldsymbol{ B} $ are $ m \times m $ matrices with elements
\begin{equation}
\label{eqABMat}
\begin{gathered}
A_{ij} = A( \phi_i, \phi_j^{(2)} ) = \frac{ V_{ij} }{ \eta_j } - \theta \Delta_{ij}, \quad
V_{ij} = \langle \phi_i, V \phi_j \rangle, \quad \Delta _{ij} = 
\delta_{ij},\\
B_{ij} = \langle \phi_i, \phi_j^{(2)} \rangle = \eta_i^{-1} \delta_{ij}, 
\end{gathered}
\end{equation}
respectively, and $ \vec c = ( c_1, \ldots, c_m )^\top $ is a column vector in $ \mathbb{ C}^m $ containing the expansion coefficients of the solution $ z $ in the $ \{ \phi_j^{(2)} \} $ basis of $ \tilde H^2_{m} $, viz.\ $ z = \sum_{k=1}^{m} c_j \phi_j^{(2)} $. It is important to note that, unlike the proofs of Theorem~\ref{thm:C} and~\eqref{eqn:E_bounded_H1}--\eqref{eqn:sup_A_H2}, in~\eqref{eqABMat} we do not require that the $ \phi_j $ be simultaneous eigenfunctions of $V$ and $P$. This concludes the description of our Galerkin approximation of the eigenvalue problem for $L_{\theta}$ and therefore for $V$.
\section{Data-driven approximation} \label{sect:numerics}

In this section, we discuss the numeric procedures used to approximate the integral operators described in Sections~\ref{sect:kernel}, \ref{sect:proof_main}, and implement the Galerkin method of Section~\ref{sect:Galerkin} using a finite, time-ordered dataset of observations $\left(F(x_n)\right)_{n=0}^{\Ndata-1}$. In addition, we will prove Theorem~\ref{thm:E}. Throughout this section, we will assume that Assumptions~\ref{asmptn:standing}--\ref{asmptn:C0_ae} hold. In particular, by Assumption~\ref{asmptn:Phys2}, we can assume without loss of generality that the underlying trajectory $\left(x_n\right)_{n=0}^{\Ndata-1}$ starts at a point $ x_0 $ in the compact set $\nbd$ (for, if $ x_0 $ were to lie in $\mathcal{V}\setminus \nbd$, the trajectory would enter $\nbd $ after finitely many steps, and its portion lying in $\mathcal{V}\setminus \nbd$ would not affect the asymptotic behavior of our schemes as $N\to\infty$). Besides this assumption, the trajectory $\left(x_n\right)_{n=0}^{\Ndata-1}$ is assumed to be unknown, and note that it need not lie on $X$.

For the purposes of the analysis that follows, it will be important to distinguish between operators that act on $L^2 $ and $C^0$ spaces. Specifically, to every kernel $ k : M \times M \to \real $ satisfying Assumption~\ref{asmptn:ker}, we will assign a bounded operator $K' : L^2(X,\mu) \to C^0(\nbd)$, acting on $f \in L^2(X,\mu)$ via the same integral formula as in~\eqref{eqn:KOp}, but with the image $K'f $ understood as an everywhere-defined, continuous function on $\nbd$. With this definition, the operator $K : L^2(X,\mu) \to L^2(X,\mu)$ acting on $L^2$ equivalence classes can be expressed as as $ K'' = \iota \circ K' $, where $\iota: C^0(\nbd) \to L^2(X,\mu)$ is the canonical $L^2 $ inclusion map on $C^0(\nbd)$, and we can also define an analog $K'' : C^0(\nbd) \to C^0(\nbd)$ acting on continuous functions via $ K'' = K' \circ \iota$. It can be verified using the Arzel\`a-Ascoli theorem that $K''$ is compact. 

\paragraph{Data-driven Hilbert spaces. } Let $\mu_\Ndata := N^{-1} \sum_{n=0}^{\Ndata-1} \delta_{x_n}$ be the sampling probability measure associated with the finite trajectory $(x_n)_{n=0}^{N-1}$. The compact set $\nbd$ from Assumption \ref{asmptn:Phys2} always contains the support of $\mu_N$. Moreover, since $x_0$ lies in the basin of the physical measure $\mu$, as $\Ndata\to\infty$, $\mu_\Ndata$ converges weakly to $ \mu $, in the sense that 
\begin{equation}
\label{eqWeakConv}
\lim_{N\to\infty} \int_{\nbd} f \, d\mu_\Ndata = \int_X f \, d\mu, \quad \forall f \in C^0(\nbd). 
\end{equation}
Our data-driven analog of the space $L^2(X,\mu)$ will be $L^2(\nbd,\mu_N)$; the set of equivalence classes of complex-valued functions on $ M $ which are square-summable and have common values at the sampled states $ x_n $. Note that $L^2(\nbd,\mu_\Ndata)\cong \cmplx^\Ndata$, and therefore every element $ f \in L^2(\nbd,\mu_\Ndata)$ can be represented in the canonical basis of $ \mathbb{ C }^N $ as an $\Ndata$-vector $\vec f = (f(x_0),\ldots ,f(x_{\Ndata-1}))$. In fact, $L^2(\nbd,\mu_\Ndata)$ is the image of $C^0(\nbd)$ under the restriction map $\pi_\Ndata : C^0(\nbd) \to L^2(\nbd,\mu_\Ndata) $, where $\pi_N f = \left(f(x_0),\ldots,f(x_{\Ndata-1}) \right) $. Moreover, given any $ f, g \in L^2(\nbd,\mu_\Ndata) $, we have $ \langle f, g \rangle_{L^2(\nbd,\mu_\Ndata)} = \vec f \cdot \vec g/ N $, where $ \cdot $ denotes the canonical inner product on $ \mathbb{ C }^N $. 

\paragraph{Kernel integral operators. } In the data-driven setting, given a continuous kernel $k: M \times M \to \real$, we define a kernel integral operator $ K_N : L^2(\nbd, \mu_N) \to C^0(\nbd) $ by (cf.\ \eqref{eqn:KOp})
\begin{displaymath}
K'_N f( x ) = \int_{\nbd} k(x,y) f(y) \, d\mu_N( y ) = \frac{1}{N} \sum_{n=0}^{N-1} k(x,x_n) f(x_n),
\end{displaymath}
and we also set $K_N : L^2(\nbd,\mu_N) \to L^2(\nbd, \mu_N)$ and $ K''_N : C^0(\nbd) \to C^0(\nbd)$ with $K_N = \pi_N \circ K'_N $ and $K''_N = K'_N \circ \pi_N $. Note that $ K_{\Ndata} $ can be represented by an $ N \times N $ matrix $ \boldsymbol{ K } $ with elements $ K_{ij} = k(x_i, x_j ) $. In this representation, the function $ g = K_N f $, $ f \in L^2(\nbd,\mu_N) $, is represented by $ \vec g = \boldsymbol{ K } \vec f $.

When $k=k_{Q}$ from~\eqref{eqKQ2}, one can similarly define operators $K'_{Q,N} : L^2(\nbd,\mu_N) \to C^0(\nbd)$, $ K_{Q,N} : L^2(\nbd,\mu_N) \to L^2(\nbd,\mu_N)$, and $K''_{Q,N} : C^0(\nbd) \to C^0(\nbd)$. This family of operators has the analogous properties to those stated for $K_{Q}$ in Lemma~\ref{lem:Rho}; namely, the functions $\rho_{Q,N} = K''_{Q,N} 1_{\nbd}$ and $\sigma_{Q,N} = K''_{Q,N}( 1/\rho_{Q,N})$ are both continuous, positive, and bounded away from zero on $\nbd$. Therefore, one can define a kernel $p_{Q,N} : M \times M \to \real$ by
\begin{gather*}
\rho_{Q,N} = K''_{Q,N} 1_{\nbd}, \quad \sigma_{Q,N} = K''_{Q,N}( 1/\rho_{Q,N}),\\
p_{Q,N}(x,y) = \frac{k_{Q,N}(x,y)}{\sigma_{Q,N}(x)\rho_{Q,N}(y)}.
\end{gather*}
The kernel $p_{Q,N}$ has the Markov property, i.e., $ \int_{\nbd}p_{Q,N}(x,\cdot) \, d\mu_N = 1 $ for every $x \in M$. Associated to $p_{Q,N}$ are the Markov operators $P'_{Q,N} : L^2(\nbd,\mu_N) \to C^0(\nbd)$ , $P_{Q,N} : L^2(\nbd, \mu_N) \to L^2(\nbd, \mu_N)$ and $P''_{Q,N} : C^0(\nbd) \to C^0(\nbd)$. Moreover, $P_{Q,N}$ is related to the self-adjoint operator $\hat P_{Q,N} : L^2(\mathcal{U,}\mu_N) \to L^2(\nbd, \mu_N) $ with kernel $\hat p_{Q,N} : M \times M \to \real$, 
\begin{equation}\label{eqn:def:hatp}
\hat p_{Q,N}(x,y) = \frac{k_{Q}(x,y)}{\hat{\sigma}_{Q,N}(x)\hat{\sigma}_{Q,N}(y)}, \quad \tilde{\sigma}_{Q,N}=\sigma_{Q,N}/\rho_{Q,N},
\end{equation}
via a similarity transformation analogous to~\eqref{eqSimilarity}. From the kernel $\hat p_{Q,N}$ one can construct the operators $\hat{P}_{Q,N}$, $\hat{P}'_{Q,N}$, and $\hat{P}''_{Q,N}$ as above.

\paragraph{Data-driven basis.} We will use the eigenvectors $ \phi_{j,Q,N} $ of $ \hat P_{Q,N} $ as an orthonormal basis of $ L^2( \nbd,\mu_N) $, and employ the corresponding eigenvalues, $ 1 = \lambda_{0,Q,N} > \lambda_{1,Q,N} \geq \cdots \geq \lambda_{N-1,Q,N} \geq 0 $, to define data-driven analogs 
\begin{equation}
\label{eqEtaQN}
\eta_{j,Q,N} = ( \lambda_{j,Q,N}^{-1} - 1)/ ( \lambda_{1,Q,N}^{-1}- 1), \quad j \in J_N, 
\end{equation}
of the $ \eta_j $ in \eqref{eqHP}, where $J_N = \{ j : \lambda_{j,Q,N} > 0 \} $. The eigenvalue problem for $\hat P_{Q,N}$ is equivalent to a matrix eigenvalue problem for the $N \times N $ symmetric matrix $ \hat{\boldsymbol{ P }} = [ \hat p_{Q,N}( x_i, x_j ) ] $ representing $\hat P_{Q,N}$. Details on the numerical solution of this problem can be found in \cite{Giannakis17,GiannakisDas_tracers}. \blue{Note that for kernels $ k_{Q} $ with exponential decay, such as the Gaussian kernels in~\eqref{eqn:def:dQ},} $\hat{\boldsymbol{P}} $ can be well approximated by a sparse matrix, allowing scalability of our techniques to large $ N $. 

To establish convergence of our schemes in the limit of large data, $N\to \infty $, we would like to establish a correspondence between the eigenvalues and eigenvectors of $\hat P_{Q,N} $ accessible from data and those of $\hat P_{Q}$, but because these operators act on the different spaces, a direct comparison of their eigenvectors is not possible. Therefore, as stated in Section~\ref{secMainResults}, we will first establish a correspondence between the eigenvalues and eigenvectors of $\hat P_{Q,N}$ ($\hat P_{Q})$ and those of $\hat P''_{Q,N}$ ($\hat P''_{Q}$), and show that $\hat P''_{Q,N}$ spectrally converges to $\hat P''_{Q}$. The latter problem is meaningful since both $ \hat P''_{Q,N}$ and $\hat P''_{Q}$ act on $C^0(\nbd)$. 

\begin{lem}\label{lemSpec}
The following correspondence between the spectra of operators holds:
\begin{enumerate}[(i)]
\item ${\lambda}_{j,Q,N}$ is a nonzero eigenvalue of $\hat P_{Q,N}$ iff it is a nonzero eigenvalue of $\hat P''_{Q,N}$. Moreover, if ${ \phi}_{j,Q,N} \in L^2(\nbd,\mu_N)$ is an eigenfunction of $\hat P_{Q,N}$ corresponding to $\lambda_{j,Q,N}$, then $ {\varphi}_{j,Q,N} = {\lambda}_{j,Q,N}^{-1} \hat P'_{Q,N} {\phi}_{j,Q,N} \in C^0(\nbd)$ is an eigenfunction of $\hat P''_{Q,N}$ corresponding to the same eigenvalue.
\item ${\lambda}_{j,Q}$ is a nonzero eigenvalue of $\hat P_{Q}$ iff it is a nonzero eigenvalue of $\hat P''_{Q,N}$. Moreover, if ${ \phi}_{j,Q} \in L^2(\nbd,\mu)$ is an eigenfunction of $\hat P_{Q}$ corresponding to $\lambda_{j,Q}$, then $ {\varphi}_{j,Q} = {\lambda}_{j,Q}^{-1} \hat P'_{Q} {\phi}_{j,Q} \in C^0(\nbd)$ is an eigenfunction of $\hat P''_{Q}$ corresponding to the same eigenvalue.
\end{enumerate}
\end{lem}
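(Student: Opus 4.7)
The plan is to recognize that in both parts of the lemma, the pair $(\hat P, \hat P'')$ sits inside a $BC/CB$ factorization through $C^0(\nbd)$, for which there is a classical spectral correspondence with an explicit eigenvector transfer formula. By the construction in Section~\ref{sect:numerics}, one has
\[\hat P_{Q,N} = \pi_N \circ \hat P'_{Q,N}, \qquad \hat P''_{Q,N} = \hat P'_{Q,N} \circ \pi_N,\]
where $\hat P'_{Q,N} : L^2(\nbd,\mu_N) \to C^0(\nbd)$ is the intermediate integral operator associated with the symmetric kernel $\hat p_{Q,N}$ in~\eqref{eqn:def:hatp}, and $\pi_N : C^0(\nbd) \to L^2(\nbd,\mu_N)$ is the sampling/restriction map. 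An analogous pair of factorizations holds in (ii), namely $\hat P_Q = \iota \circ \hat P'_Q$ and $\hat P''_Q = \hat P'_Q \circ \iota$, with $\iota : C^0(\nbd) \to L^2(X,\mu)$ in place of $\pi_N$ and $\hat P'_Q$ in place of $\hat P'_{Q,N}$.

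\textbf{Forward direction for (i).} Fix a nonzero eigenvalue $\lambda_{j,Q,N}$ of $\hat P_{Q,N}$ with eigenfunction $\phi_{j,Q,N} \in L^2(\nbd,\mu_N)$, and define $\varphi_{j,Q,N} := \lambda_{j,Q,N}^{-1}\, \hat P'_{Q,N} \phi_{j,Q,N}$, which lies in $C^0(\nbd)$ by construction of $\hat P'_{Q,N}$. Using the two factorizations and the eigenvalue equation for $\phi_{j,Q,N}$, a direct computation yields
\[\hat P''_{Q,N} \varphi_{j,Q,N} = \hat P'_{Q,N}\, \pi_N \varphi_{j,Q,N} = \lambda_{j,Q,N}^{-1}\, \hat P'_{Q,N}\, \hat P_{Q,N}\, \phi_{j,Q,N} = \hat P'_{Q,N} \phi_{j,Q,N} = \lambda_{j,Q,N}\, \varphi_{j,Q,N}.\]
The function $\varphi_{j,Q,N}$ is nonzero: if $\hat P'_{Q,N} \phi_{j,Q,N} = 0$, then applying $\pi_N$ would give $\hat P_{Q,N} \phi_{j,Q,N} = 0$, contradicting $\lambda_{j,Q,N} \phi_{j,Q,N} \neq 0$.

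\textbf{Backward direction and part (ii).} Conversely, if $\hat P''_{Q,N} \varphi = \lambda \varphi$ with $\lambda \neq 0$, then $\phi := \pi_N \varphi \in L^2(\nbd,\mu_N)$ satisfies
\[\hat P_{Q,N} \phi = \pi_N\, \hat P'_{Q,N}\, \pi_N \varphi = \pi_N\, \hat P''_{Q,N}\, \varphi = \lambda\, \pi_N \varphi = \lambda \phi,\]
with $\phi \neq 0$ because $\pi_N \varphi = 0$ would force $\hat P''_{Q,N} \varphi = \hat P'_{Q,N} \pi_N \varphi = 0$, contradicting $\lambda \varphi \neq 0$. This establishes the spectral correspondence in (i). Part (ii) follows from the identical argument, verbatim, with $\pi_N$ replaced by $\iota$ and $\hat P'_{Q,N}$ by $\hat P'_Q$.

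\textbf{Main obstacle.} There is no real analytic obstacle beyond the $BC/CB$ bookkeeping. The only point requiring a small verification is that the intermediate operators $\hat P'_{Q,N}$ and $\hat P'_Q$ indeed map into $C^0(\nbd)$: for (i) this is immediate from continuity of the kernel $k_Q$ together with the continuity and boundedness-away-from-zero of $\hat\sigma_{Q,N}$ (cf.\ Lemma~\ref{lem:Rho}), and for (ii) it follows from Lemma~\ref{lem:KL2C0} combined with Corollary~\ref{cor:inf_delay}. Once the factorizations are recorded, the eigenfunction transfer is purely algebraic.
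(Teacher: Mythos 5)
Your proof is correct and takes essentially the same approach as the paper; the paper simply asserts that Lemma~\ref{lemSpec} is a ``direct consequence of the definitions'' of the operators, and your $BC/CB$ factorization argument (writing $\hat P_{Q,N} = \pi_N \circ \hat P'_{Q,N}$, $\hat P''_{Q,N} = \hat P'_{Q,N} \circ \pi_N$, and analogously with $\iota$ in place of $\pi_N$ for part (ii)) is precisely the intended algebraic unwinding, with the nonvanishing checks done correctly.
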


Lemma~\ref{lemSpec} \blue{is a direct consequence of the definitions of $\hat{P}_{Q,N}$ and $P_{Q,N}''$. } Next, we establish spectral convergence of $\hat P''_{Q,N}$ to $\hat P''_{Q}$. For that, we will need the following notion of convergence of operators.

\paragraph{Compact convergence.} A sequence of operators $A_n$ on a Banach space $ B $ is said to be compactly convergent to an operator $A$ if $A_n\to A$ pointwise, and for every bounded sequence of vectors $(f_n)_{n\in\num}$, $ f_n \in B $, the sequence $((A-A_n)f_n)_{n\in\num}$ has compact closure. The following proposition states that the data-driven operators $ \hat P_{Q,N} $ converge compactly, and as result in spectrum; for a proof, see \cite{VonLuxburgEtAl08}, Proposition 11, and \cite{wellner2013weak}, Theorem 2.4.1. 

\begin{prop}\label{prop:C0norm_PNQ}
Let Assumptions \ref{asmptn:standing}--\ref{asmptn:SPD} hold. Given a trajectory $ (x_n)_{n\in\num} $ starting in $\basin$, the corresponding sequence of operators $\hat{P}''_{Q,\Ndata}$ constructed from the observations $ F( x_0 ), \ldots, F(x_{N-1})$ converges compactly as $ N \to \infty $ to $\hat{P}''_{Q}$. As a result, the sequence $\hat{P}_{Q,\Ndata}$ converges spectrally, in the sense of Corollary~\ref{corSpectral}, to $\hat{P}_{Q}$. In particular, since the nonzero spectrum of a compact operator only consists of isolated eigenvalues, the convergence holds for all nonzero eigenvalues of $\hat P''_{Q,N}$ and the corresponding eigenspaces.
\end{prop}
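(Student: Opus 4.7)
The plan is to establish compact convergence $\hat{P}''_{Q,N} \to \hat{P}''_Q$ on $C^0(\nbd)$ by separately verifying its two ingredients---strong (pointwise) convergence and relative compactness of images of bounded sequences---and then invoke the operator-theoretic results cited in the statement. The correspondence of nonzero spectra between $\hat{P}$-operators on $L^2$ and $\hat{P}''$-operators on $C^0$, already supplied by Lemma~\ref{lemSpec}, converts the $C^0$-level conclusion into the desired $L^2$ spectral convergence.

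The analytical backbone is uniform convergence of the normalized kernels. For each fixed $x \in \nbd$, weak convergence $\mu_N \to \mu$ on $C^0(\nbd)$ (guaranteed by $x_0 \in \basin$ and Assumption~\ref{asmptn:Phys2}) applied to the continuous function $k_Q(x,\cdot)$ yields $\rho_{Q,N}(x) \to \rho_Q(x)$ pointwise. Uniform continuity of $k_Q$ on the compact product $\nbd \times \nbd$ endows the family $\{\rho_{Q,N}\}_N$ with a common modulus of continuity in $x$; equicontinuity together with pointwise convergence on a compact set upgrades to uniform convergence. Assumption~\ref{asmptn:ker}(ii) and compactness of $\nbd$ provide a positive lower bound for $k_Q$, hence a uniform lower bound for $\rho_{Q,N}$, so that $1/\rho_{Q,N} \to 1/\rho_Q$ uniformly. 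Iterating the same reasoning for $\sigma_{Q,N} = K''_{Q,N}(1/\rho_{Q,N})$ gives $\sigma_{Q,N} \to \sigma_Q$ and $\hat{\sigma}_{Q,N} = \sqrt{\sigma_{Q,N}\rho_{Q,N}} \to \hat{\sigma}_Q$ uniformly, and finally $\hat{p}_{Q,N} \to \hat{p}_Q$ uniformly on $\nbd \times \nbd$.

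With uniform kernel convergence in hand, strong convergence follows from the split
\[
(\hat{P}''_{Q,N} - \hat{P}''_Q) f(x) = \int_\nbd (\hat{p}_{Q,N} - \hat{p}_Q)(x,y) f(y)\, d\mu_N(y) + \int_\nbd \hat{p}_Q(x,y) f(y)\, d(\mu_N - \mu)(y),
\]
in which the first term is bounded uniformly in $x$ by $\|\hat{p}_{Q,N} - \hat{p}_Q\|_\infty \|f\|_\infty$ and the second tends to $0$ by weak convergence $\mu_N \to \mu$ applied to the continuous integrand $y \mapsto \hat{p}_Q(x,y)f(y)$; uniformity in $x$ is then a standard equicontinuity argument. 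For the compactness ingredient, given a bounded sequence $(f_n)$ with $\sup_n \|f_n\|_\infty \leq M$, the family $\{\hat{P}''_{Q,N} f_n, \hat{P}''_Q f_n\}_{n,N}$ is uniformly bounded and equicontinuous in $x$ because
\[
|\hat{P}''_{Q,N} f_n(x) - \hat{P}''_{Q,N} f_n(x')| \leq M \sup_{y \in \nbd}|\hat{p}_{Q,N}(x,y) - \hat{p}_{Q,N}(x',y)|,
\]
and the right-hand side is uniformly small in $n$ and $N$ by the uniform equicontinuity of the kernel family established above. Arzel\`a--Ascoli then delivers relative compactness of $\{(\hat{P}''_Q - \hat{P}''_{Q,N}) f_n\}$, and together with strong convergence this is the definition of compact convergence. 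The spectral conclusions are then inherited verbatim from the cited references.

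The main obstacle is the first step: upgrading weak convergence $\mu_N \to \mu$ to uniform convergence of the Markov-normalized kernels. The key structural inputs are Assumption~\ref{asmptn:Phys2}, which confines the entire forward orbit to the compact set $\nbd$ on which $k_Q$ is uniformly continuous, and Assumption~\ref{asmptn:ker}(ii) combined with compactness, which supplies a uniform lower bound on $\rho_{Q,N}$ so that the ratios defining $\hat{p}_{Q,N}$ remain well controlled through each normalization stage. Once these uniform estimates are in place, the remaining Arzel\`a--Ascoli and Lemma~\ref{lemSpec} arguments are routine.
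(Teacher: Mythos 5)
Your proof is correct. The paper itself offers no internal argument here: it simply points to von Luxburg, Belkin, and Bousquet (2008), Proposition~11, and Wellner, Theorem~2.4.1 for the compact convergence, and to von Luxburg et~al., Proposition~13 for the resulting spectral convergence. Your write-up is, in effect, a self-contained unpacking of what those citations establish, and it follows the same structural route: upgrade weak convergence $\mu_N \to \mu$ (which holds because $x_0 \in \basin$ and the orbit is confined to the compact set $\nbd$) to \emph{uniform} convergence of the normalizing functions $\rho_{Q,N}$, $\sigma_{Q,N}$, $\hat\sigma_{Q,N}$ via equicontinuity of the kernel; use Assumption~\ref{asmptn:ker}(ii) together with compactness of $\nbd$ to keep the denominators uniformly bounded below so that $\hat p_{Q,N} \to \hat p_Q$ uniformly on $\nbd \times \nbd$; then split $(\hat P''_{Q,N} - \hat P''_Q) f$ to get strong convergence, and use Arzel\`a--Ascoli on the uniformly bounded and equicontinuous images to get collective compactness; finally pass to $L^2$ through Lemma~\ref{lemSpec}. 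One small remark worth making: your direct argument nowhere invokes the positive-definiteness hypothesis of Assumption~\ref{asmptn:SPD} — only strict positivity (Assumption~\ref{asmptn:ker}(ii)) is actually needed to keep $1/\rho_{Q,N}$ under control — whereas the RKHS-based machinery in the cited von Luxburg et~al. propositions does rely on an underlying reproducing kernel structure. In that sense your route is slightly more elementary and assumes a bit less than the proposition as stated.
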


\blue{The spectral convergence results above follow from Proposition 13 in \cite{VonLuxburgEtAl08}.} We will now prove Theorem~\ref{thm:E}. Note that there is some similarity between our methods and papers on spectral convergence of kernel algorithms, e.g., \cite{BelkinNiyogi2007,VonLuxburgEtAl08}, but our assumptions distinguishes Theorem~\ref{thm:E} from previously studied cases. In particular, we do not assume an i.i.d.\ sequence of observed quantities, or that the sampled sequence $(x_n)_{n=0}^{N-1}$ lies on the support $X$ of the invariant measure (as assumed in \cite{BelkinNiyogi2007,VonLuxburgEtAl08}). Finally, $X$ need not have a manifold structure (as assumed in \cite{BelkinNiyogi2007} and other manifold learning algorithms).

\paragraph{Proof of Theorem~\ref{thm:E}.} The claims of the theorem follow from analogous results to Lemma~\ref{lemSpec} and Proposition~\ref{prop:C0norm_PNQ} for the operators $P_{Q,N}$, $P'_{Q,N}$, $P''_{Q,N}$ and $P_{Q}$, $P'_{Q}$, $P''_{Q}$. \qed

Together, Lemma~\ref{lemSpec} and Proposition~\ref{prop:C0norm_PNQ} imply that every eigenpair $(\lambda_{j,Q},\phi_{j,Q})$ of $\hat P_{Q}$ can be consistently approximated by a sequence of eigenpairs $({\lambda}_{j,Q,N}, {\phi}_{j,Q,N} ) $ of $ \hat P_{Q,N}$. Moreover, by Corollary~\ref{corSpectral}, as $Q\to\infty$, $(\lambda_{j,Q},\phi_{j,Q})$ approximates in turn the eigenpair $(\lambda_j,\phi_j)$ of $P$; that is,
\begin{equation}\label{eqSpecConv}
\lim_{Q\to\infty}\lim_{N\to\infty} \lambda_{j,Q,N} = \lambda_j, \quad \lim_{Q\to\infty}\lim_{N\to\infty} {\lambda}_{j,Q,N}^{-1} \iota\hat P'_{Q,N} {\phi}_{j,Q,N} = \phi_j,
\end{equation}
where the second limit is taken with respect to the $L^2(X,\mu)$ norm. Since, as can be seen in~\eqref{eqABMat}, the Galerkin scheme in Section~\ref{sect:Galerkin} can be entirely formulated using the $ \lambda_j $ and the matrix elements $\langle \phi_i, V \phi_j^{(2)} \rangle$ of the generator, \eqref{eqSpecConv} indicates in turn that we can construct a consistent data-driven Galerkin scheme if we can consistently compute approximate generator matrix elements using the data-driven eigenfunctions ${\phi}_{j,Q,N}$. To that end, we will employ finite-difference approximations, as described below.

\paragraph{Finite-difference approximation.} The action $ V f $ of the generator on a function $ f \in D( V) $ is defined via the limit in \eqref{eqn:def_gen_flow}. This suggests that for data sampled discretely at sampling interval $ \Delta t $, we can approximate $ V f $ by a finite-difference approximation \cite{Giannakis15,GiannakisEtAl15,Giannakis17}. For example, the following are first- and second-order, approximation schemes for $V$, respectively:
\begin{equation}\label{eqFD}
V_{\Delta t} f = \frac{1}{\Delta t} (U^{\Delta t} f - f), \quad V_{\Delta t} f = \frac{1}{2\Delta t}\left(U^{\Delta t}f-U^{-\Delta t}f\right).
\end{equation}
In the finite-sample case, we approximate $ V_{\Delta t} $ by a corresponding $\pow$-th order finite-difference operator $ V_{\Delta t,\Ndata} : L^2(\nbd,\mu_\Ndata) \to L^2(\nbd,\mu_\Ndata)$. For example, in the case of the first-order scheme in~\eqref{eqFD}, $V_{\Delta t, N}$ becomes
\begin{equation}\label{eqFDN}
V_{\Delta t,N} f( x_n ) = \frac{f(x_{n+1}) - f(x_{n})}{\Delta t}, \quad n \in \{ 0, \ldots, N-2 \}, 
\end{equation}
and $V_{\Delta t,N} f(x_{N-1}) = 0$. To ensure that the approximations $V_{\Delta t, N} f $ converge to the true function $Vf$ for a class of functions of sufficient regularity, the following smoothness conditions are sufficient:
\begin{Assumption} \label{asmptn:Cr}
\blue{$\nbd$ is a $C^{1+\alpha}$ compact manifold for some $\alpha>0$, and $\Phi^t\rvert_{\nbd}$ is generated by a $C^{\alpha}$ vector field $\vec V$. Moreover, $F\rvert_\nbd\in C^{1+\alpha}(\nbd;\real^\dimObs)$, and the kernel shape function $h:\real\to\real$ is $C^{1+\alpha}$. $V_{\Delta t}$ and $V_{\Delta t,N}$ are first-order finite difference schemes, as in~\eqref{eqFD} and~\eqref{eqFDN}, respectively}.
\end{Assumption}

Under Assumption~\ref{asmptn:Cr}, the flow $\Phi^t$ is generated by a $C^\alpha$ vector field $ \vec V : C^1(\mathcal{U}) \to C^0(\mathcal{U})$, and the generator $V$ of the Koopman group is an extension of the latter differential operator. Moreover, we can approximate $ \vec V $ by finite-difference schemes $ \vec V_{\Delta t} : C^0(\mathcal{U}) \to C^0(\mathcal{U})$, defined analogously to~\eqref{eqFD} with $ U^{\Delta t}$ replaced by $ \Phi^t$. We then have:

\begin{prop}\label{prop:Fin_diff}
Let Assumptions \ref{asmptn:standing}, \ref{asmptn:ker}, and~\ref{asmptn:Cr} hold. Then for every $i,j\in\num$: 
\begin{enumerate}[(i)]
\item The eigenfunctions $\varphi_{j,Q,N} $ and $\varphi_{j,Q} $ from Lemma~\ref{lemSpec} lie in $C^{1+\alpha}(\nbd)$. Moreover, as $ \Delta t \to 0 $, 
\begin{displaymath}
\vec V_{\Delta t}\varphi_{j,Q} =\vec V\varphi_{j,Q} + \|\phi_{j,Q}\|_{C^{1+\alpha}(\nbd)} O(\Delta t)^{\alpha} , 
\end{displaymath}
where the estimate holds uniformly on $\nbd$.
\item $\lim_{\Delta t\to 0}\lim_{\Ndata\to\infty}\langle\hat{\phi}_{i}, V_{\Delta t,\Ndata} \hat{\phi}_{j}\rangle_{L^2(\nbd,\mu_N)} = \langle \phi_{i,Q}, V\phi_{j,Q} \rangle$.
\end{enumerate}
\end{prop}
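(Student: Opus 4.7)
The plan is a two-stage argument: Part~(i) establishes the necessary $C^{1+\alpha}$ regularity of the eigenfunctions together with the pointwise convergence rate of the finite-difference scheme, and Part~(ii) combines this with the $C^0$ spectral convergence of Proposition~\ref{prop:C0norm_PNQ} by a nested-limit argument, taking $N\to\infty$ first with $\Delta t$ fixed and only subsequently sending $\Delta t\to 0$, using the weak convergence $\mu_N \to \mu$ from~\eqref{eqWeakConv} to pass the inner limit.

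For Part~(i), note that under Assumption~\ref{asmptn:Cr} the distance-like function $d_Q(\cdot,y)$ is $C^{1+\alpha}$ in its first argument for each fixed $y$ (as a finite sum of $\|F\circ\Phi^{q\,\Delta t}\|^2$-type terms with $C^{1+\alpha}$ building blocks), and composition with $h$ followed by division by the continuous, strictly positive normalization $\hat\sigma_Q$ preserves this regularity. Hence $\hat p_Q(\cdot, y)\in C^{1+\alpha}(\nbd)$ uniformly in $y$, and by differentiation-under-the-integral the eigenfunction
\begin{displaymath}
\varphi_{j,Q} = \lambda_{j,Q}^{-1}\hat P'_Q \phi_{j,Q} = \lambda_{j,Q}^{-1} \int_X \hat p_Q(\cdot,y)\phi_{j,Q}(y)\,d\mu(y)
\end{displaymath}
lies in $C^{1+\alpha}(\nbd)$; the same argument with $\mu$ replaced by $\mu_N$ gives $\varphi_{j,Q,N}\in C^{1+\alpha}(\nbd)$. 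For the quantitative finite-difference estimate, the fundamental theorem of calculus along the flow yields
\begin{displaymath}
\varphi_{j,Q}(\Phi^{\Delta t}(x)) - \varphi_{j,Q}(x) = \int_0^{\Delta t} (\vec V\varphi_{j,Q})(\Phi^s(x))\,ds.
\end{displaymath}
Since $\vec V\varphi_{j,Q}$ is $\alpha$-H\"older (as the pairing of a $C^\alpha$ vector field with the $C^\alpha$ differential of $\varphi_{j,Q}$) with norm controlled by $\|\varphi_{j,Q}\|_{C^{1+\alpha}(\nbd)}$, and $|\Phi^s(x) - x| = O(s)$, the integrand equals $(\vec V\varphi_{j,Q})(x) + O(s^\alpha)\|\varphi_{j,Q}\|_{C^{1+\alpha}(\nbd)}$ uniformly in $x$. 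Integrating and dividing by $\Delta t$ gives the claim.

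For Part~(ii), the key identity is that for each sample index $n\le N-2$ we have $\hat\phi_{j,Q,N}(x_n)=\varphi_{j,Q,N}(x_n)$ and $x_{n+1}=\Phi^{\Delta t}(x_n)$, so the data-driven matrix element admits the representation
\begin{displaymath}
\langle \hat\phi_{i}, V_{\Delta t, N}\hat\phi_{j}\rangle_{L^2(\nbd,\mu_N)} = \int_\nbd \varphi_{i,Q,N}^* (\vec V_{\Delta t}\varphi_{j,Q,N})\,d\mu_N + O(1/N),
\end{displaymath}
where the $O(1/N)$ term absorbs the missing endpoint $n=N-1$. By Proposition~\ref{prop:C0norm_PNQ} and Lemma~\ref{lemSpec}, the compactly convergent sequence $\hat P''_{Q,N}\to \hat P''_Q$ gives, after fixing a consistent phase/basis via convergence of the spectral projectors $\Pi_N\to\Pi$, the $C^0(\nbd)$-convergence $\varphi_{j,Q,N}\to\varphi_{j,Q}$. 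Because $\vec V_{\Delta t}$ is a bounded operator on $C^0(\nbd)$ for fixed $\Delta t$, this also yields $\vec V_{\Delta t}\varphi_{j,Q,N}\to \vec V_{\Delta t}\varphi_{j,Q}$ in $C^0(\nbd)$. Combined with the weak convergence $\mu_N\to\mu$ on $C^0(\nbd)$, the inner limit becomes $\int_X \varphi_{i,Q}^*(\vec V_{\Delta t}\varphi_{j,Q})\,d\mu$. Part~(i) then furnishes the outer limit $\int_X \varphi_{i,Q}^*(\vec V\varphi_{j,Q})\,d\mu$; since $\vec V$ coincides on $C^1$ functions with the Koopman generator $V$ (as recalled in Section~\ref{sect:theory}) and $\varphi_{j,Q}$ is the continuous representative of $\phi_{j,Q}$, this integral equals $\langle\phi_{i,Q},V\phi_{j,Q}\rangle$.

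The principal obstacle is the first limit: upgrading the compact-convergence / gap-convergence statements of Corollary~\ref{corSpectral} and Proposition~\ref{prop:C0norm_PNQ} to genuine $C^0$-convergence of individually chosen, phase-aligned eigenfunctions, so that the product $\varphi_{i,Q,N}^*(\vec V_{\Delta t}\varphi_{j,Q,N})$ converges uniformly and can be integrated against $\mu_N$. In the nondegenerate case this is straightforward; more generally one must work with the spectral projectors and express the matrix entries in a basis-free way, then select eigenbases in the limit that diagonalize the block $\Pi L_\theta \Pi$ and transfer this choice back to the data-driven operators. A secondary technical point is justifying differentiation-under-the-integral for the $\mu_N$ integrals with the same H\"older constants as in the $\mu$ case, which is needed to ensure a uniform bound on $\|\varphi_{j,Q,N}\|_{C^{1+\alpha}(\nbd)}$ independent of $N$.
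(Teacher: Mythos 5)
Your proof follows essentially the same route as the paper's: Part~(i) is established via the $C^{1+\alpha}$ regularity of the kernel $\hat p_Q$ and the resulting regularity of the eigenfunctions (the paper cites the Ferreira--Menegatto smoothing result \cite{EigenIntOp2} where you argue by differentiation under the integral, but these amount to the same mechanism), combined with a first-order Taylor expansion along the orbit controlled by the H\"older modulus of $\vec V \varphi_{j,Q}$; Part~(ii) then follows by pairing Part~(i) with Lemma~\ref{lemSpec}, Proposition~\ref{prop:C0norm_PNQ}, and the weak convergence of $\mu_N$ in~\eqref{eqWeakConv}. Your version is actually more explicit than the paper's one-sentence treatment of Part~(ii), and you correctly identify the eigenbasis-alignment subtlety in degenerate eigenspaces --- a point the paper passes over silently --- while also noting it is surmountable via the projector-convergence formulation of Corollary~\ref{corSpectral}; your secondary concern about a uniform-in-$N$ bound on the $C^{1+\alpha}$ norm of $\varphi_{j,Q,N}$ is not actually needed for the nested limit in Part~(ii) as stated (since $\Delta t\to 0$ is taken after $N\to\infty$ and the error estimate is only invoked for the $N$-independent $\varphi_{j,Q}$), but flagging it does no harm.
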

\begin{proof} To prove Claim (i), note that under Assumption \ref{asmptn:Cr}, for a finite number of delays $Q$, by \eqref{eqn:def:dQ}, $\hat p_{Q}$ is a $C^{1+\alpha}$-smooth kernel. Hence, according to \cite{EigenIntOp2}, the ranges of the integral operators $\hat P'_Q$ and $ \hat P'_{Q,N}$, and thus $\varphi_{j,Q}$ and $\varphi_{j,Q,N}$ lie in $ C^{1+\alpha}(\mathcal{U})$. Since the vector field $ \vec V $ is $C^\alpha$, the trajectories are $C^{1+\alpha}$-smooth, and therefore, $\vec V\varphi_{j,Q}$, which is the time derivative along the orbit, has a first-order Taylor expansion. The first-order finite-difference scheme gives the $\|\varphi_{j,Q}\|_{C^{1+\alpha}(\nbd)} O(\Delta t)^{\alpha}$ error.
Claim~(ii) is a consequence of Claim~(i), in conjunction with the weak convergence of measures in~\eqref{eqWeakConv} and Lemma~\ref{lemSpec}. \qed
\end{proof}

\begin{rk*} \blue{In many cases, such as flows induced on inertial manifolds in dissipative PDEs \cite{ConstantinEtAl1989}, the $C^{1+\alpha}$ regularity in Assumption~\ref{asmptn:Cr} cannot be strengthened. Proposition \ref{prop:Fin_diff} provides the basis for numerically approximating $V$ for these cases. If $M$, $\nbd$, $\vec V $, $F$ and $h$ have a higher degree of smoothness, say $C^r$ for some $r\geq 2$, then taking $V_{\Delta t}$ to be an $r$-th order finite-difference scheme would lead to an improved, $O(\Delta t)^{r}$, convergence.}
\end{rk*}

\paragraph{Data-driven Galerkin method.} Using the $ {\eta}_{j,Q,N} $ from~\eqref{eqEtaQN}, we define the data-driven normalized basis vectors $ \hat{\phi}_{j}^{(p)} = \phi_{j,Q,N}/ \hat{\eta}^{p/2}_{j,Q,N} $, $ j \in J_N$ (cf.\ the $ \phi_j^{(p)} $ from Step~1 in Section~\ref{sect:Galerkin}), and the associated Galerkin approximation spaces $ H^p_{N,Q,m} = \spn \{ \hat{\phi}^{(p)}_{j} \}_{j=1}^m \subseteq L^2(\mathcal{U},\mu_N) $, $m\leq J_N$, where we abbreviate $H^p_{Q,N,J_N} =: H^p_{Q,N} $ and $H^0_{Q,N} =: H_{Q,N}$. We also define the positive semidefinite, self-adjoint operator $ \Delta_{Q,N} : H_{Q,N} \to H_{Q,N} $, where
\begin{displaymath}
\Delta_{Q,N} f = \sum_{j\in J_N} {\eta}_{j,Q,N} c_j {\phi}_{j,Q,N}, \quad f = \sum_{j=0}^{N-1} c_j{\phi}_{j,Q,N}. 
\end{displaymath}
This operator is a data-driven analog of $\Delta $ in~\eqref{eqn:regularize}. With these definitions and the finite-difference approximation of $ V $ described above, we pose the following data-driven analog of the Galerkin approximation in Definition~\ref{defGalerkin2}:
\begin{defn}[Koopman eigenvalue problem, data-driven form]
\label{defDataDrivenGalerkin}
Find $ \gamma \in \mathbb{ C } $ and $ z \in H^2_{Q,N,m} $ such that for all $ f \in H_{Q,N,m} $, 
\begin{displaymath}
A_{\Delta t, Q,N}( f, z ) = \langle f, z \rangle_{L^2(\nbd,\mu_N)},
\end{displaymath}
where $ A_{\Delta t, Q, N} : H_{Q,N} \times H_{Q,N} \to \mathbb{ C } $ is the sesquilinear form defined as
\begin{displaymath}
A_{\Delta t,Q,N}( f, z ) = \langle f, V_{\Delta t,\Ndata} z \rangle_{L^2(\nbd,\mu_N)} - \theta \langle f, \Delta_{Q,N} z \rangle_{L^2(\nbd,\mu_N)}.
\end{displaymath}
\end{defn}

Numerically, this is equivalent to solving a matrix generalized eigenvalue problem analogous to that in~\eqref{eqGEV}, viz.
\begin{displaymath}
\boldsymbol{A} \vec c = \lambda \boldsymbol{ B} \vec c,
\end{displaymath}
where $ \boldsymbol{ A }$ and $ \boldsymbol{ B} $ are $ m \times m $ matrices with elements
\begin{gather*}
A_{ij} = A_{\Delta t, Q,N}( \phi_{i,Q,N}, \phi_{j,Q,N}^{(2)} ) = \frac{ V_{ij} }{ \eta_{j,Q,N} } - \theta \Delta_{ij}, \\
V_{ij} = \langle {\phi}_{i,Q,N}, V_{\Delta t,N} {\phi}_{j,Q,N} \rangle_{L^2(\nbd,\mu_N)}, \quad \Delta _{ij} = 
\delta_{ij},\\
B_{ij} = \langle {\phi}_{i,Q,N}, {\phi}_{j,Q,N}^{(2)} \rangle_{L^2(\nbd,\mu_N)} = {\eta}_{i,Q,N}^{-1} \delta_{ij}, 
\end{gather*} 
respectively, and $ \vec c = ( c_1, \ldots, c_m )^\top $ is a column vector in $ \mathbb{ C}^m $ containing the expansion coefficients of the solution $ z = \sum_{j=1}^{m} c_j \hat{\phi}_{j,Q,N}^{(2)} $ in the $ \{ \hat{\phi}_{j}^{(2)} \} $ basis of $ H^2_{Q,N,m} $. \blue{Analogously to the continuous case, we define a data-driven Dirichlet energy functional $ E_{Q,N} $ on $H^2_{Q,N}$, given by $E_{Q,N}(f) = \langle f, \Delta_{Q,N} f \rangle_{L^2(\mathcal{U},\mu_N)}$, and use that functional to order the computed eigenfunctions in order of increasing Dirichlet energy. Note that, unless an antisymmetrization is explicitly performed, in the data driven setting, $V_{ij} $ will generally not be equal to $-V_{ji} $, and thus $ \Real \gamma $ will not be equal to $ -\theta E_{Q,N}(z)$ (cf.\ Section~\ref{sect:Galerkin}). Nevertheless, in practice we observe that $ \Real \gamma \approx - \theta E_{Q,N}(z) $, at least for the leading eigenfunctions.} 

For any fixed $ m $, and up to similarity transformations, the matrices $ \boldsymbol{A} $ and $ \boldsymbol{B} $ converge in the limits $ Q \to \infty $, after $\Delta t \to 0 $, after $N\to\infty $ (in that order) to the corresponding matrices in the variational eigenvalue problem in~\eqref{eqGEV}. We therefore conclude that the data-driven Galerkin method in Definition~\ref{defDataDrivenGalerkin} is consistent (as $\Delta t \to 0 $ and $Q,N\to\infty$) with the Galerkin method in Definition~\ref{defGalerkin2}, which is in turn consistent (as $m\to\infty$) with the weak eigenvalue problem for the regularized generator $L_\theta$ in Definition~\ref{defGalerkin}.

\section{Results and discussion}\label{sect:examples}

In this section, we apply the methods described in Sections \ref{sect:kernel}--\ref{sect:numerics} to two ergodic dynamical systems with mixed spectrum, constructed as products of either a mixing flow on the 3-torus, or the L63 system, with circle rotations. Our objectives are to demonstrate that (i) the results of Theorem~\ref{thm:A} and Corollaries~\ref{corSpectral} and~\ref{thm:B} hold, that is, the eigenfunctions $\hat \phi_j$ of $P_{Q,N}$ from \eqref{eqn:def_KHP_Q} are eigenfunctions of $U^t$; (ii) the eigenvalues obtained using the Galerkin scheme in Definition~\ref{defDataDrivenGalerkin} are consistent with those expected theoretically.

\subsection{\label{secMixedSpec}Two systems with mixed spectrum} 

The first system studied below is based on a strongly mixing flow on the 3-torus introduced by Fayad \cite{Fayad02}. The flow, denoted by $\Phi_{\mathbb{T}^3}^t$, is given by the solution of the ordinary differential equation (ODE) $ d( x,y, z ) / dt = \vec V( x,y,z) $, where $ ( x, y, z ) \in \mathbb{ T}^3 $, and $ \vec V $ is the smooth vector field
\begin{equation}\label{eqn:3DTorus}
\vec V( x, y, z ) = \vec \nu / \varphi( x, y, z ), \quad \varphi(x,y,z) = 1+\sum_{k=1}^{\infty}\frac{e^{-k}}{k} \Real\left[ \sum_{|l|\leq k} e^{ik(x+y)+il z} \right],
\end{equation}
parameterized by the constant frequency vector $ \vec \nu $. Hereafter, we set $\vec{\nu}=(\sqrt{2},\sqrt{10},1)^\top$. Note that the orbits under $ \Phi_{\mathbb{T}^3}^t $ are the same as that of the ergodic, non-mixing linear flow with constant vector $\vec{\nu}$. $ \Phi_{\mathbb{T}^3}^t $ has a unique Borel, invariant, ergodic probability measure $\mu$ with density $ \varphi / \int_M \varphi \, d\Leb $ relative to Lebesgue measure. Such flows are also called reparameterized flows as $ \vec \nu $ is scaled by the function $\varphi $ at each point $(x,y,z)\in\TorusD{3}$.

This system is strongly mixing with respect to its invariant measure, i.e., its generator has purely continuous spectrum \cite{Fayad02}. To construct an associated mixed-spectrum system, we take the product $ \Phi_{\mathbb{T}^3}^t\times \Phi_\omega^t $ with a periodic flow $\Phi_\omega^t$ on $S^1$, defined as
\begin{equation}\label{eqn:1D_oscill}
d \Phi_\omega^t(\alpha)/dt = \omega, \quad \omega=1.
\end{equation}
Thus, the state space of the product system is $ M = \mathbb{ T}^3 \times S^1 = \mathbb{ T }^4 $. Note that in this example the attracting set $ X $ is smooth and coincides with the state space, $ M = X $; in particular, all states sampled experimentally lie exactly on $ X $. \blue{Moreover, the Koopman generator $V: D(V) \to L^2( X, \mu ) $ is a skew-adjoint extension of the differential operator $ \vec V \oplus \vec\omega : C^\infty( X ) \to L^2( X, \mu ) $, where $\vec\omega : C^{\infty}(S^1) \to C^\infty(S^1)$ is the differential operator $ f \mapsto \vec \omega( f ) := \omega f'$.} Since $ \Phi_\omega^t $ has a pure point spectrum consisting of integer multiples of $ i \omega $ and $ \Phi_{\mathbb{T}^3}^t $ has no eigenvalues, the discrete spectrum of the product system is $ \{ i k \omega, k \in \mathbb{ Z} \} $.

The second system that we study is based on the L63 system \cite{Lorenz63}. This system is known to have a chaotic attractor $ X_\text{Lor} \subset \real^3 $ with fractal dimension $2.0627160$ \cite{LorentzFract}, supporting a physical invariant measure \cite{Tucker99} with \blue{a compact absorbing ball \cite{LawEtAl14} and mixing dynamics \cite{LuzzattoEtAl05}. The latter, implies that the generator $V$ of the system has only constant eigenfunctions, corresponding to eigenvalue $0$.} The flow, denoted by $\Phi^t_\text{Lor} $, is generated by a smooth vector field $ \vec V \in C^\infty( \real^3; \real^3 ) $, whose components at $(x,y,z)\in \real^3$ are
\begin{equation}\label{eqn:l63}
V^{(x)} = \sigma(y-x), \quad V^{(y)} = x(\rho-z) -y, \quad V^{(z)} = xy-\beta z.
\end{equation}
Throughout, we use the standard parameter values $\beta = 8/3$, $\rho = 28$, $\sigma = 10$. As in the torus case, we form the product $ \Phi^t_\text{Lor} \times \Phi_\omega^t $ with the rotation $ \Phi_\omega^t $ in~\eqref{eqn:1D_oscill}, leading to a mixed spectrum system with the same discrete spectrum $ \{ i k \omega, k \in \mathbb{ Z} \} $. Note that unlike the torus-based system, the attracting set $ X = X_\text{Lor} \times S^1 $ is a strict subset of the state space $ M = \real^3 \times S^1 $.

For each product system, we define a continuous map $ F : M \to \mathbb{R }^3 $ coupling the degrees of freedom of the continuous-spectrum subsystem with the rotation. In the case of the torus-based system, we define $ F( x, y, z, \alpha ) = ( F_1, F_2, F_3 ) $, $ ( x, y, z ) \in \mathbb{ T}^3 $, $ \alpha \in S^1 $, via additive coupling, viz.
\begin{equation}
\label{eqn:4D_additive} F_1 = \sin\alpha+ \sin x, \quad F_2 = \cos\alpha+ \sin y, \quad F_3 = \sin(2\alpha) + \sin z.
\end{equation}
In the case of the L63-based system, the coupling is nonlinear with $ F( x, y, z, \alpha )= ( F_1, F_2, F_3 ) $, $ ( x, y, z ) \in \real^3 $, $ \alpha \in S^1 $, and 
\begin{equation}
\label{eqn:l63_skew_nonlinear}F_1 = \sin(\alpha + x), \quad F_2 = \cos(2\alpha + y), \quad F_3 = \cos(\alpha + z).
\end{equation}

\begin{figure}
\centering
\includegraphics[width=.44\textwidth]{\Path 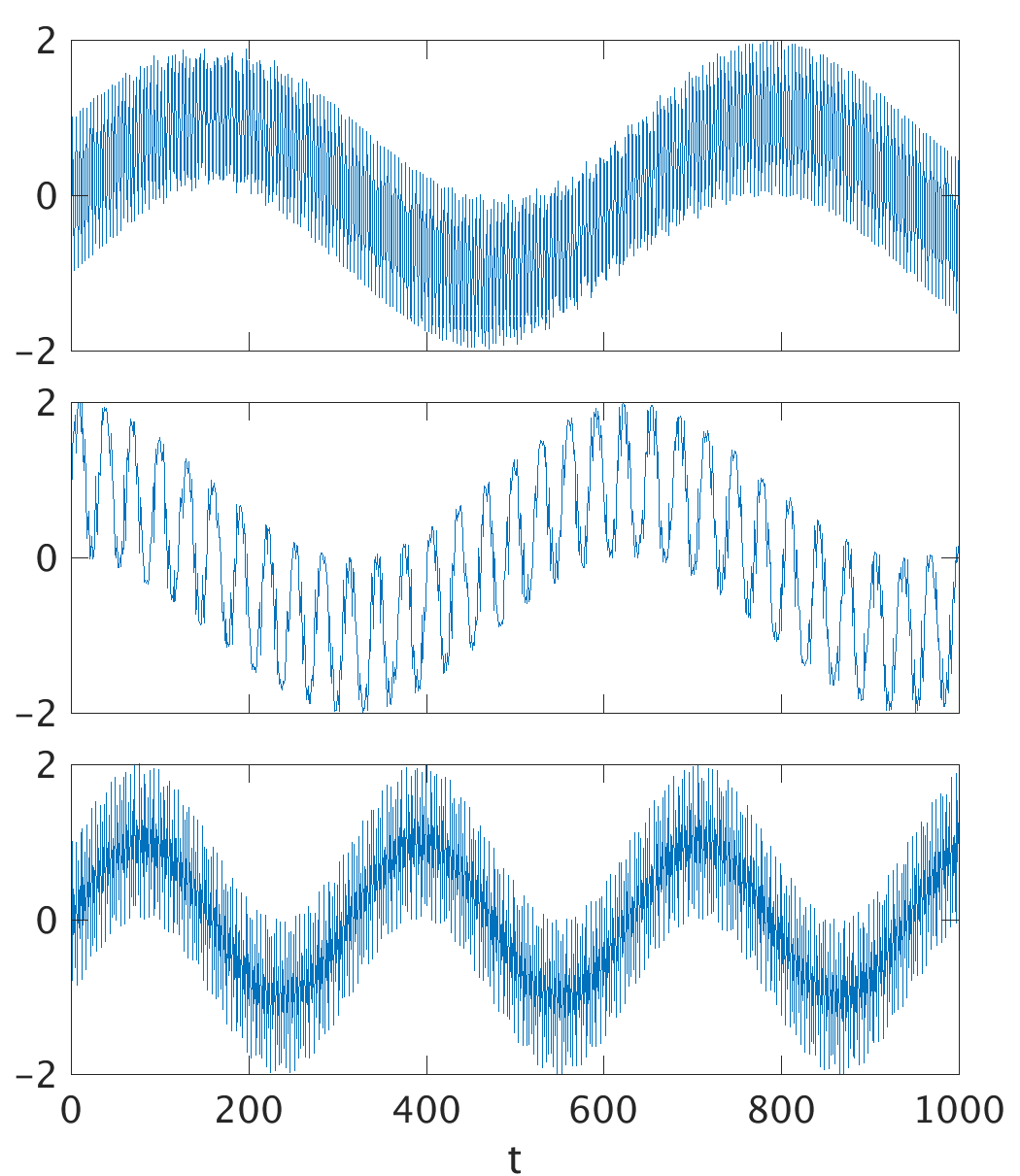}
\includegraphics[width=.52\textwidth]{\Path 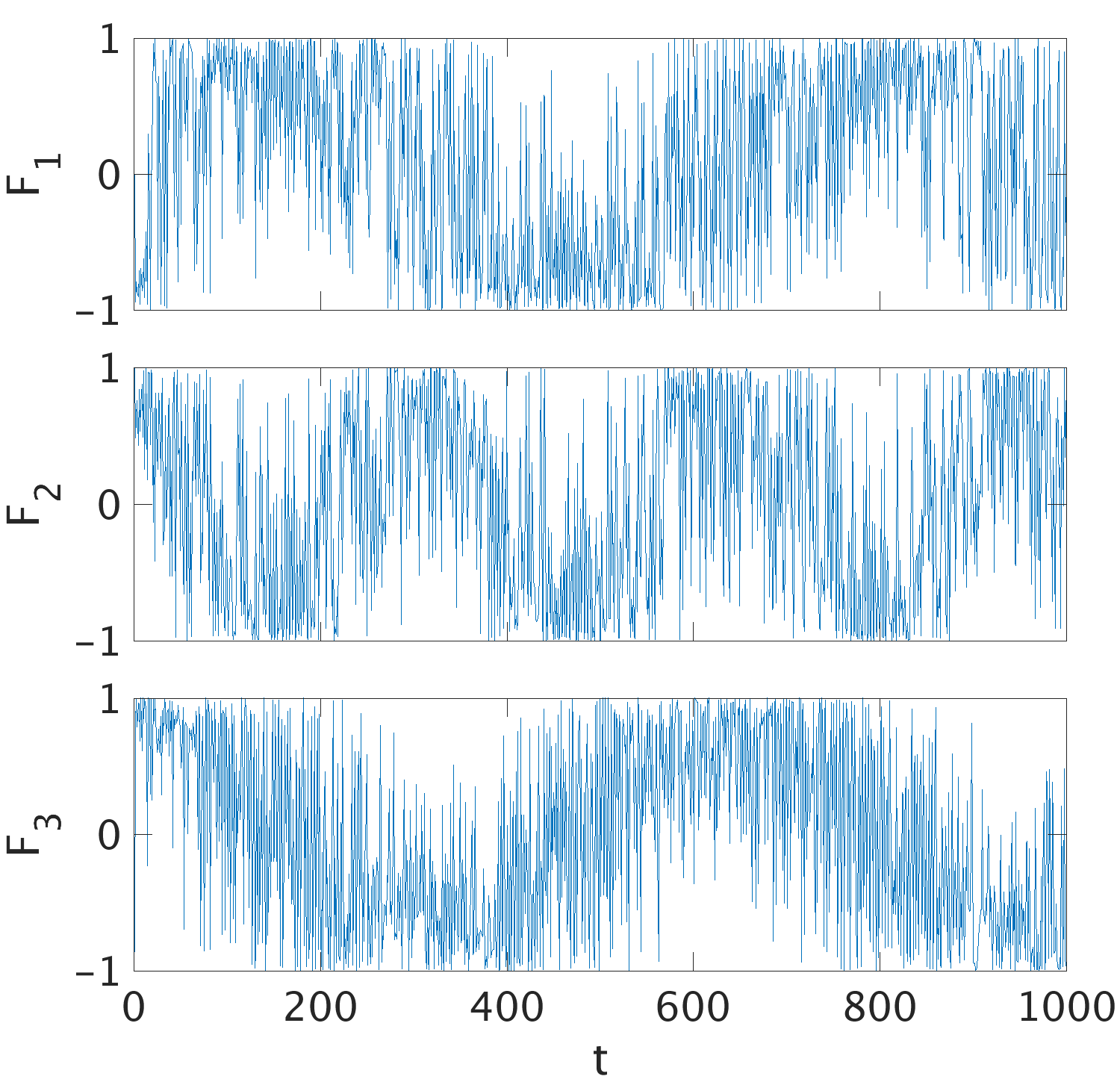}
\caption[Timeseries]{Time series of the observation maps of the torus-based system \eqref{eqn:4D_additive} (left) and the L63-based system \eqref{eqn:l63_skew_nonlinear} (right). Each time series is a nonlinear combination of data generated from two sources, one with a purely continuous spectrum, \eqref{eqn:l63} and \eqref{eqn:3DTorus}, respectively, and one with a purely discrete spectrum, \eqref{eqn:1D_oscill}. \blue{The time series clearly exhibit complex evolution, characteristic of chaotic dynamics, and recovering from them Koopman eigenvalues and eigenfunctions is a non-trivial task.}} 
\label{fig:time_series}
\end{figure}

\subsection{\label{secResults}Experimental results}

We generated numerical trajectories $ x_0, x_1, \ldots, x_{N-1} $ of the torus- and L63-based systems described above starting in each case from an arbitrary initial condition $ y \in M $. In the torus experiments, the system is always on the attractor, so the starting state $ x_0 $ in the training data was set to $ y $. In the L63 experiments, we let the system relax towards the attractor, and set $ x_0 $ to a state sampled after a long spinup time (4000 time units); that is, we formally assume that $ y $ (and therefore $ x_0 $) lie in the basin $\basin$ of the physical measure associated with $ X $. In both cases, the number of samples was $N=\text{50,000}$, the integration time-step was $0.01$, and the number of delays was $Q=2000$. Gaussian kernels $k_Q $ from~\eqref{eqn:def:kQ} were used throughout. We employed the \texttt{ode45} solver of Matlab to compute the trajectories and generated time series $ F( x_0 ), F( x_1 ), \ldots, F( x_{N-1} ) $ by applying the observation maps in~\eqref{eqn:4D_additive} and~\eqref{eqn:l63_skew_nonlinear} to the respective states $ x_n$. Portions of the observable time series from each system are displayed in Fig.~\ref{fig:time_series}. Note that the $ x_n $ were not presented to our kernel algorithm.

We computed data-driven eigenpairs $ ( \lambda_{j,Q,N},\phi_{j,Q,N}) $ by solving the eigenvalue problem for the operator \blue{$ \hat P_{Q,N} $ from \eqref{eqn:def:hatp} }, using Matlab's \texttt{eigs} iterative solver. Henceforth, for ease of notation, we abbreviate $ \lambda_{j,Q,N}$ and $\phi_{j,Q,N}$ by $\hat \lambda_j$ and $\hat \phi_j$, respectively. The bandwidth parameter $\epsilon$ of the Gaussian kernels was selected using the tuning procedure described in \cite{BerryHarlim16,GraphTomog,BerryEtAl15}, which yielded $\epsilon \approx 3.6$ and $\approx 2.053$ for the torus and L63 systems, respectively. Representative eigenfunctions $ \hat \phi_j $, plotted as time series $ n \to \hat \phi_j( x_n ) $, and the corresponding eigenvalues are displayed in Figs.~\ref{fig:4DTorus_additive} and~\ref{fig:lambda_compare_mixed_spectra}, respectively. We now describe these results in more detail.

\begin{figure}
\centering
\includegraphics[width=.48\textwidth]{\Path 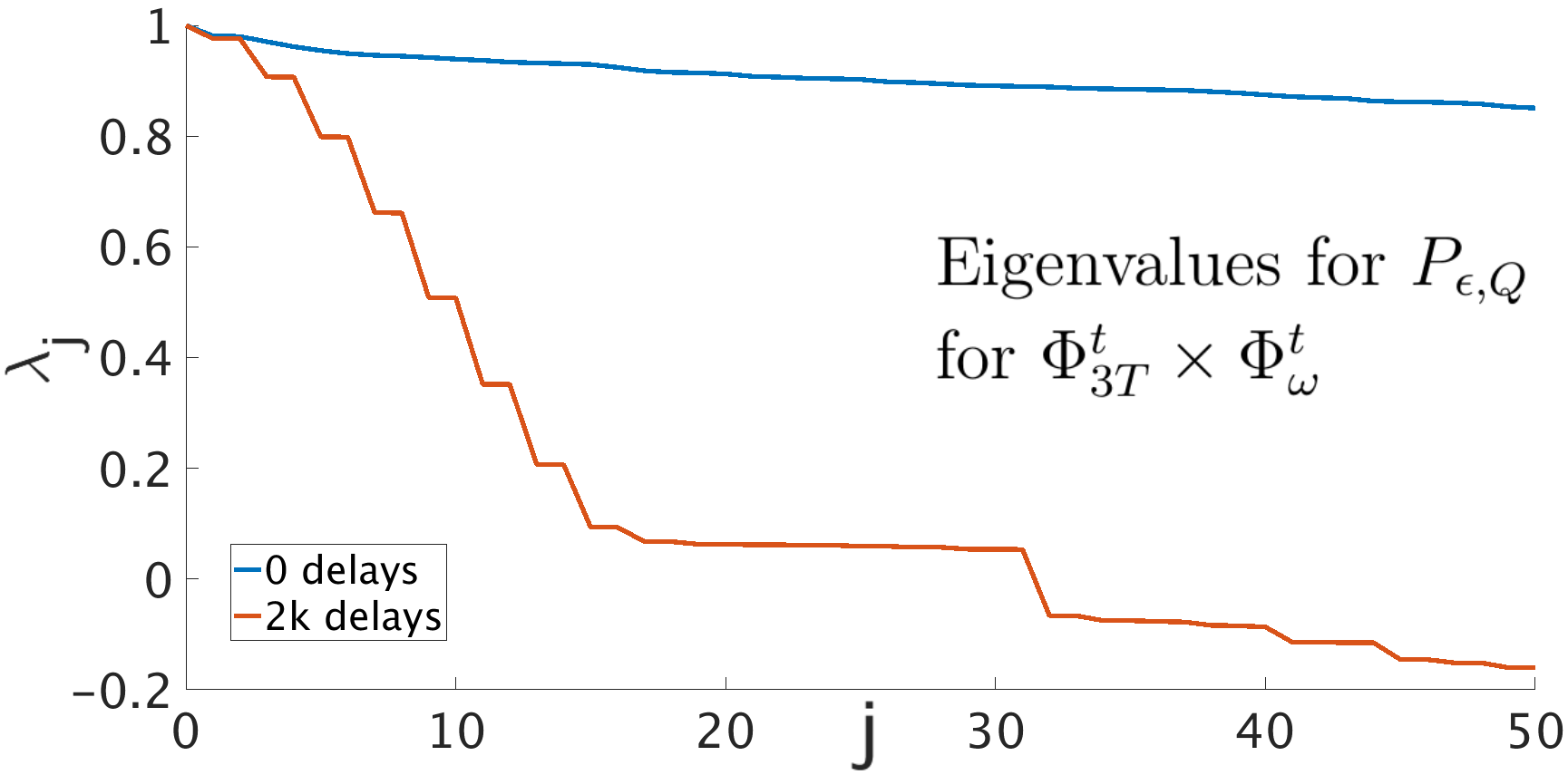}
\includegraphics[width=.48\textwidth]{\Path 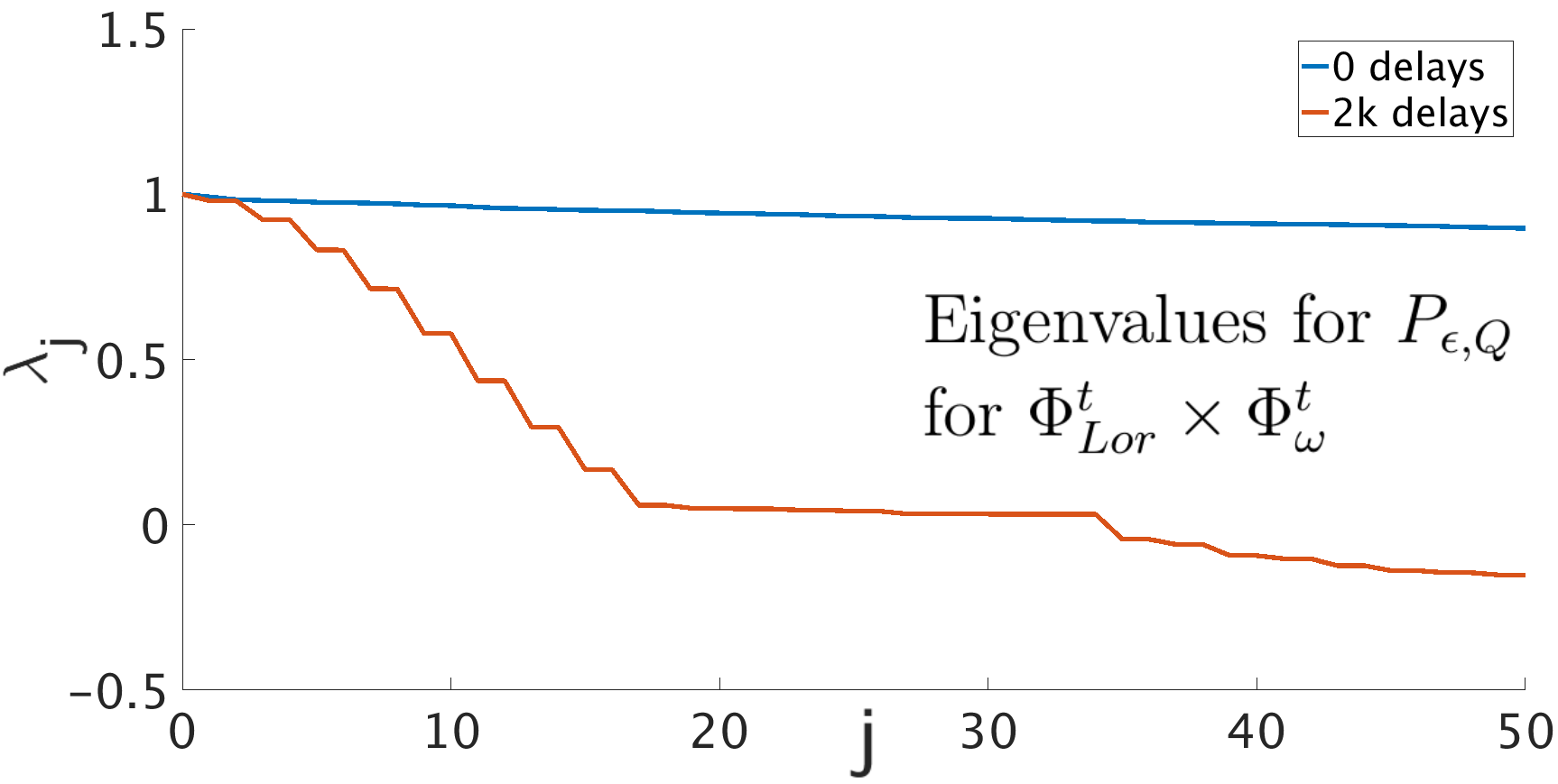}
\caption[Comparison of $lambdas$]{Eigenvalues $ \hat \lambda_j $ of the integral operator $P_{Q,N}$ for representative values of the delay parameter $ Q $ for the torus system in~\eqref{eqn:4D_additive} (left) and the L63-based system in \eqref{eqn:l63_skew_nonlinear} (right). The blue and red lines correspond to no delays ($Q=1$) and 2000 delays, respectively. When $ Q \leq 1 $, the eigenvalues are seen clustering around $1$. The eigenvalues cannot exceed $1$ as $P_{Q,N}$ is a Markov operator. At $Q = 2000 $, the eigenvalues decay more rapidly towards zero and, at least up to eigenvalue 15, have multiplicity 2 as expected from Proposition~\ref{prop:W_lambda}. \label{fig:lambda_compare_mixed_spectra}}
\end{figure}

According to Theorem~\ref{thm:A} and Proposition~\ref{prop:W_lambda}, at large numbers of delays (here, $Q=2000$), the eigenfunctions $ \hat \phi_j $ of $\hat P_{Q,N}$ should form doubly degenerate pairs, and each pair should exhibit a single frequency associated with an eigenvalue of $ V $. \blue{ More precisely, $ \hat \phi_{j} \pm i \hat \phi_{j+1} $} with $ j \in \{ 1, 3, \ldots \} $ should approximate an eigenfunction of $V $. Both systems studied here have exactly one rationally independent eigenvalue $i\omega=i$, so the eigenfunctions of $ P_{Q,N} $ are expected to evolve at frequencies $j\omega$, $j\in\num$. This is evidently the case in the time series plots in Fig.~\ref{fig:4DTorus_additive}. Also, each of the $\hat\phi_j $ has multiplicity $2$ (note that only one eigenfunction from each eigenspace is shown in Fig.~\ref{fig:4DTorus_additive}). The left-hand panels of Fig.~\ref{fig:4DTorus_additive} show a matrix representation of the generator $V$ (approximated via the finite-difference scheme in~\eqref{eqFDN}) in the 51-dimensional data-driven subspace spanned by $ \hat \phi_0, \ldots, \hat \phi_{50} $. The matrix is, to a good approximation, skew-symmetric, consistent with the fact that $V$ is a skew-symmetric operator, and exhibits prominent $ 2 \times 2 $ diagonal blocks associated with the eigenspaces of $ V $ approximated by $ ( \hat \phi_1, \hat \phi_2 ), ( \hat \phi_3,\hat \phi_4 ), \ldots $, in agreement with Corollary~\ref{corBlock}.

Figure~\ref{fig:eigenvalues} shows the approximated eigenvalues $ \gamma_j $ of the regularized generator $ L_\theta $ obtained from this basis using the Galerkin scheme in Definition~\ref{defDataDrivenGalerkin} with the diffusion regularization and spectral order parameters $\theta = 10^{-4} $ and $m = 50 $, respectively. Each plot in Fig.~\ref{fig:eigenvalues} shows the first 20 eigenvalues corresponding to eigenfunctions of increasing Dirichlet energy $ E( z_j ) $ of the corresponding eigenfunction $ z_j $ (recall that $ \Real \gamma_j \approx - \theta E_{Q,N}( z_j) $). According to Section~\ref{sect:Galerkin}, the imaginary parts of the $ \gamma_j $ should approximate the Koopman eigenfrequencies $ j(k) \omega $, where $ j $ is an integer-valued function giving the frequency of the Koopman eigenfunction with the $ k $-th smallest Dirichlet energy. In Fig.~\ref{fig:eigenvalues}, the $ \Imag \gamma_j $ are indeed equal to integer multiples of $ \omega = 1 $ to a good approximation for the first $\NEigen$ eigenvalues (ordered in order of increasing Dirichlet energy). For indices $ k$ close to $m$, the accuracy of the eigenvalues begins to deteriorate. This is due to the facts that (i) even with a ``perfect'' basis $ \{ \hat \phi_j \} $, eigenfunctions of higher Dirichlet energy (and stronger oscillatory behavior) require increasingly higher-order Galerkin approximation spaces; (ii) at finite sample numbers $ N $, the quality of the data-driven elements $ \hat\phi_j $ degrades at large $j $. 

\begin{figure}
\centering
\includegraphics[width=0.49\textwidth]{\Path 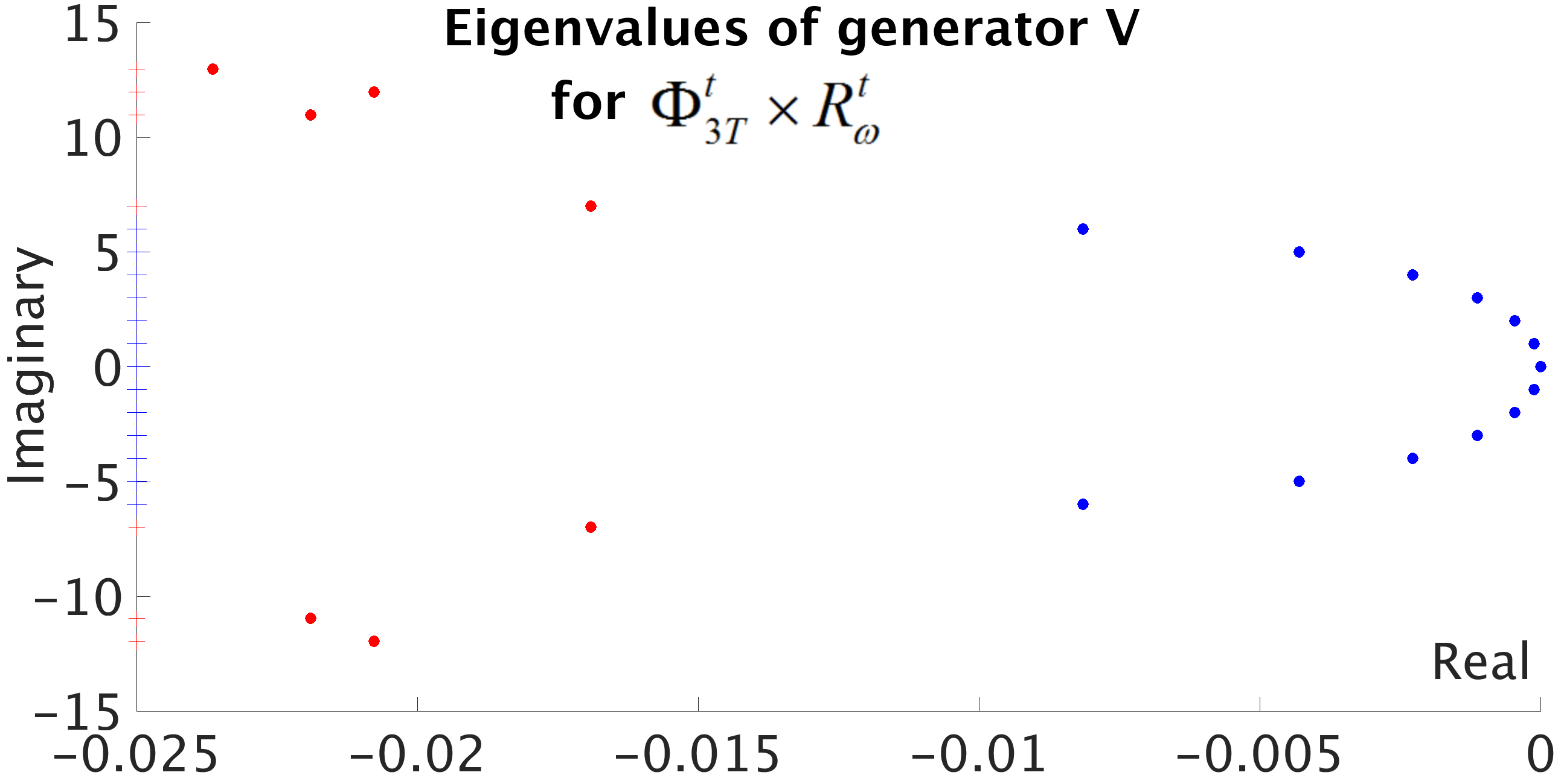}
\includegraphics[width=0.49\textwidth]{\Path 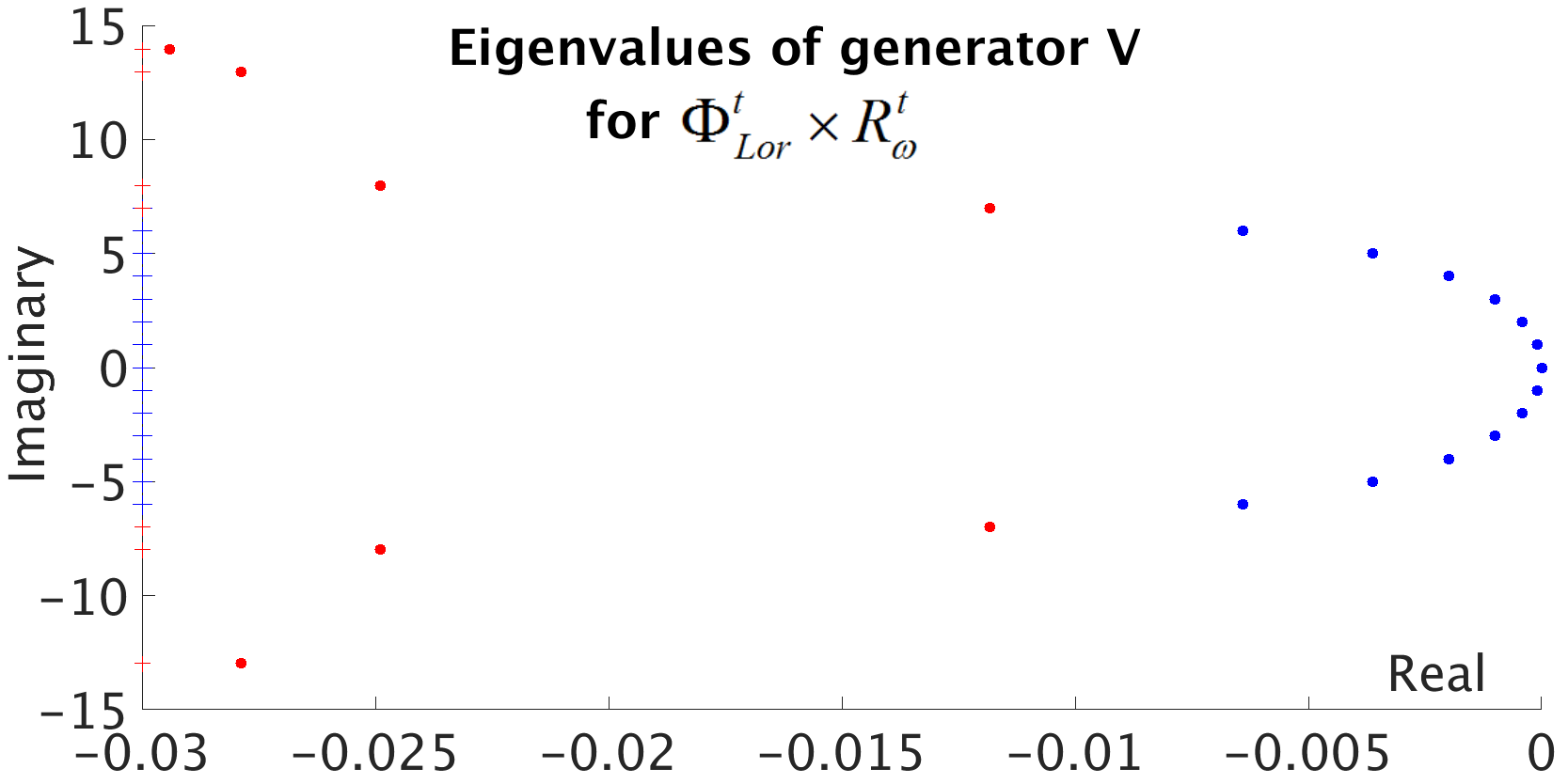}
\caption{Galerkin approximations $ \gamma_j $ of the eigenvalues of the regularized generator $ L_\theta $ for the torus-based system \eqref{eqn:4D_additive} (left) and the L63-based system \eqref{eqn:l63_skew_nonlinear} (right). The numerical eigenvalues were obtained through the data-driven variational eigenvalue problem in definition~\ref{defDataDrivenGalerkin} with a spectral order parameter $ m = 50$. Each plot shows the first 20 eigenvalues corresponding to eigenfunctions of increasing Dirichlet energy, with the first 14 plotted in blue and the remaining 5 in red. Dashes on the imaginary axes indicate the imaginary parts of the eigenvalues. The intervals between the blue-colored dashes are to a good approximation equal to 1, in agreement with the exact Koopman eigenvalues of these systems.}
\label{fig:eigenvalues}
\end{figure}

\subsection{Discussion}

The examples presented in Sections~\ref{secMixedSpec} and~\ref{secResults} are Cartesian products of weak mixing and quasiperiodic flows, with their phase variables combined through some observation map. We begin with some observations about our kernel method applied to Cartesian products. 

\paragraph{Cartesian products.} Let $(X,\Phi_X^t,\mu_X)$ and $(Y,\Phi_Y^t,\mu_Y)$ be two ergodic flows on compact metric spaces with purely continuous and pure-point spectra, respectively. We are interested in the measure-preserving mixed spectrum dynamical system $(X\times Y,\Phi_X^t\times\Phi_Y^t ,\mu_X\times\mu_Y)$. It is well known that the space $L^2(X\times Y,\mu_X\times\mu_Y)$ is densely spanned by products of the form \{$f\otimes g$ : $f\in L^2(X,\mu_X)$, $g\in L^2(Y,\mu_Y)$\}. Recall that the observation map $F$ is the basis of our construction of all our data-driven operators. 
Corollary~\ref{cor:observability} below is a direct consequence of Proposition~\ref{prop:observability}, and gives an ``observability'' condition that must be fulfilled by the observation map $ F $ in order for the methods presented here to yield non-trivial results. 
\begin{cor}\label{cor:observability}
Let $(X,\Phi_X^t,\mu_X)$ and $(Y,\Phi_Y^t,\mu_Y)$ be as described above, and $F\in L^2(X\times Y,\mu_X\times\mu_Y)$ be the sum $F=\sum_{n=1}^{\infty} f_n\otimes g_n$. Then, $F_{\Disc} = \sum_{n=1}^{\infty} E(f_n) g_n$, where $E(f_n) = \int_X f_n \, d\mu_X$. Hence, a necessary and sufficient condition that $P$ is not trivial is that $E(f_n) \neq 0$ for at least one $n\in\num$.
\end{cor}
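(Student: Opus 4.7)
The plan is to reduce the corollary to Proposition~\ref{prop:observability}, which already states that $P$ is trivial iff the quasiperiodic component $F_{\Disc}$ of the observation map is a constant almost everywhere. Hence the substantive work is to (a) identify the discrete-spectrum subspace $\Disc$ of the product system $(X\times Y,\Phi_X^t\times\Phi_Y^t,\mu_X\times\mu_Y)$, and (b) compute the orthogonal projection of $F = \sum_n f_n \otimes g_n$ onto $\Disc$.

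For step (a), I would use the tensor-product factorization $U^t_{X\times Y} = U^t_X \otimes U^t_Y$ on $L^2(X,\mu_X)\otimes L^2(Y,\mu_Y) \cong L^2(X\times Y,\mu_X\times\mu_Y)$. The joint eigenfunctions of a tensor product of unitary one-parameter groups are precisely pure tensors of eigenfunctions of the factors; the argument is the standard one based on simultaneous diagonalization of commuting spectral families. Since $(X,\Phi_X^t,\mu_X)$ has purely continuous spectrum, the only $U^t_X$-eigenfunctions are constants, so every joint Koopman eigenfunction has the form $1_X\otimes g$ with $g$ an eigenfunction of $U_Y^t$. Since $(Y,\Phi_Y^t,\mu_Y)$ has pure point spectrum, such $g$ span $L^2(Y,\mu_Y)$. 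Thus $\Disc = \{1_X\}\otimes L^2(Y,\mu_Y)$, i.e.\ the subspace of functions on $X\times Y$ that depend only on $y$.

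Step (b) is then immediate: orthogonal projection onto $\Disc$ is the conditional expectation with respect to the $Y$-coordinate, so by Fubini and $L^2$-continuity of the projection,
\[
F_{\Disc}(x,y) = \int_X F(x',y)\,d\mu_X(x') = \sum_{n=1}^\infty E(f_n)\,g_n(y),
\]
which is the announced formula. Combining with Proposition~\ref{prop:observability} yields the two directions of the equivalence: if $E(f_n) = 0$ for all $n$, then $F_{\Disc} = 0$ is constant and $P$ is trivial; conversely, if $F_{\Disc}$ is constant, then the expansion above must reduce to a scalar multiple of $1_Y$, which forces $E(f_n) = 0$ on all directions orthogonal to constants.

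The one point that requires care, and which I anticipate as the main obstacle, is the last implication: the statement ``$P$ non-trivial iff at least one $E(f_n)\neq 0$'' is really the correct equivalence modulo the constant mode $g_n = 1_Y$, since a non-vanishing $E(f_n)$ paired with $g_n$ constant still contributes only a constant to $F_{\Disc}$. To handle this cleanly, I would first orthonormalize the sequence $\{g_n\}$ in $L^2(Y,\mu_Y)$ by Gram--Schmidt, absorbing the change of basis into the $f_n$, and arrange that one basis vector (if present) is $1_Y$ while the others are orthogonal to constants; after this canonical rearrangement, linear independence of the $g_n$ makes the biconditional unambiguous. This normalization is purely algebraic and does not affect $P$, since it corresponds only to a change of representation of $F$ inside $L^2(X\times Y,\mu_X\times\mu_Y)$.
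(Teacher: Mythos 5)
Your argument is correct and fills in exactly the steps the paper leaves implicit when it says the corollary is ``a direct consequence of Proposition~\ref{prop:observability}.'' The substantive ingredients --- characterizing $\Disc$ for the product system, computing the projection of a tensor sum onto $\Disc$, and then invoking the equivalence ``$P$ trivial $\iff F_{\Disc}$ constant'' --- are what any proof must supply, and you supply them correctly.

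Two remarks. First, your characterization of $\Disc$ is right, but the general principle you invoke (``joint eigenfunctions of a tensor product of unitary groups are precisely pure tensors of eigenfunctions of the factors'') is false in general; it can fail when the factors share nonzero eigenfrequencies, creating degenerate product eigenvalues with non-tensor eigenvectors. Here the argument is saved precisely because $U^t_X$ has a trivial discrete spectrum: expanding any product eigenfunction $\phi$ against an orthonormal eigenbasis $\{g_k\}$ of $U^t_Y$ as $\phi = \sum_k a_k \otimes g_k$ forces each nonzero $a_k$ to be a $U^t_X$-eigenfunction, hence constant, and ergodicity of $Y$ then leaves a single nonzero summand $1_X\otimes g_{k_0}$. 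It is worth stating the argument this way rather than by the unqualified general claim.

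Second, you are right to flag the constant mode. As written, the corollary's biconditional is imprecise: $E(f_n)\neq 0$ for some $n$ with $g_n$ a nonzero constant yields $F_{\Disc}$ constant, so $P$ remains trivial. Your fix --- putting the $g_n$ into an orthonormal form with the constant mode isolated --- is the right normalization, but your closing sentence should be more explicit that the equivalence then reads ``$P$ nontrivial iff $E(f_n)\neq 0$ for some $n$ whose $g_n$ is orthogonal to constants''; linear independence of the $g_n$ alone does not remove the exceptional constant direction. This is a genuine (if minor) gap in the paper's statement, and your proof proposal is the first place it is handled correctly.
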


\paragraph{Kernels with a small number of delays.} An implicit assumption in the approximation of the operator $P$ in \eqref{eqn:def_KHP_Q} by the operator $P_{Q}$ in \eqref{eqn:def_KHP_Q} with finitely many delays $Q$, is that $Q$ is large-enough for the asymptotic analysis of Lemma \ref{lem:cmmt_Koopman} to hold. When $Q$ is small, $d_{Q}$ is closer to a proper metric and therefore, the entries $K_{ij}=\exp(-d_{Q}(x_i,x_j)^2/\epsilon)$ of the kernel matrix $\boldsymbol K $ decay rapidly away from the diagonal $i=j$. Then $K_{ij}$ is close to a diagonal matrix, and $P_{i,j}$ is close to the identity matrix. On the other hand, for $Q$ large, $d_{Q}$ becomes a pseudo-metric and $P_{i,j}$ is not necessarily close to a diagonal matrix. Figure \ref{fig:lambda_compare_mixed_spectra} shows how the Koopman eigenvalues computed for the two examples from \eqref{eqn:4D_additive} and \eqref{eqn:l63_skew_nonlinear} cluster near $1$ for $Q=1$ and decay more rapidly for $Q = 2000$.

\begin{figure}
\centering
\includegraphics[width=.48\textwidth]{\Path 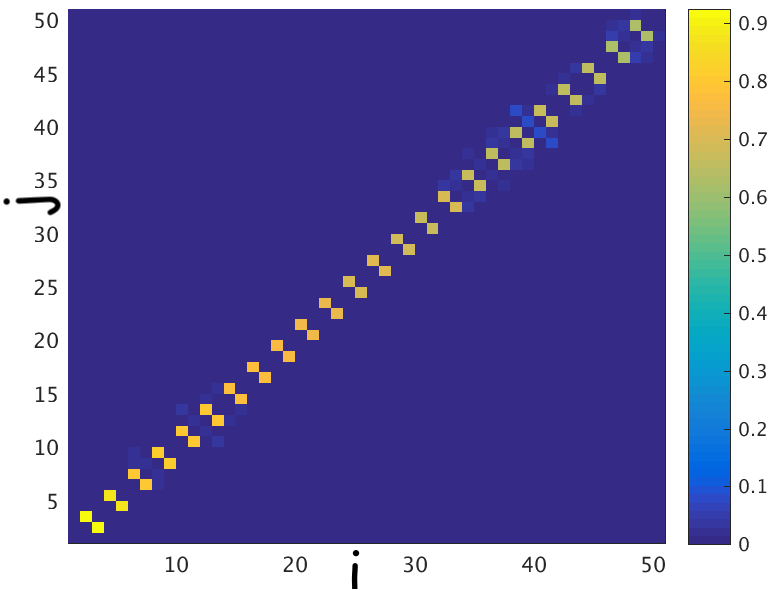}
\includegraphics[width=.48\textwidth]{\Path 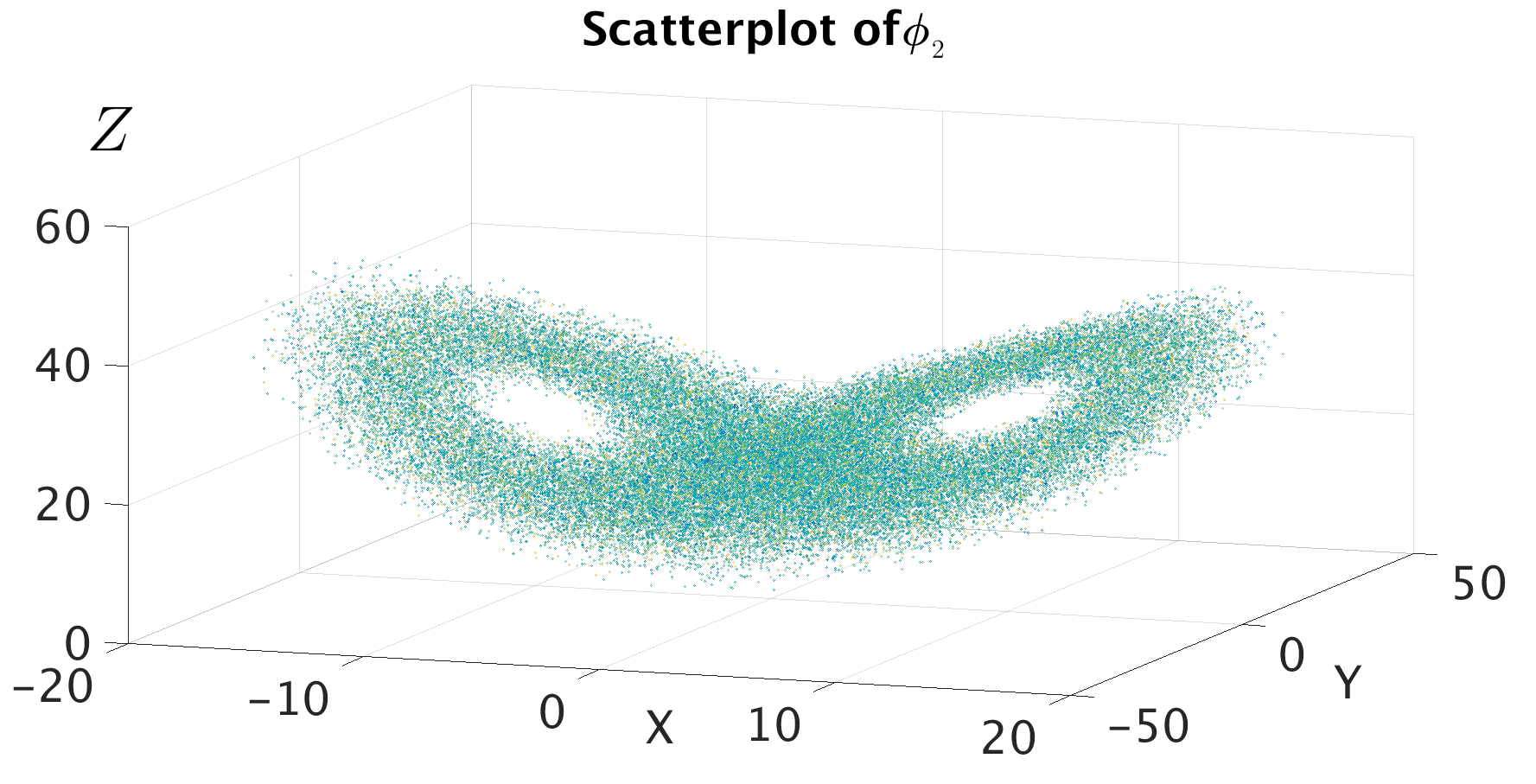}

\includegraphics[width=.48\textwidth]{\Path 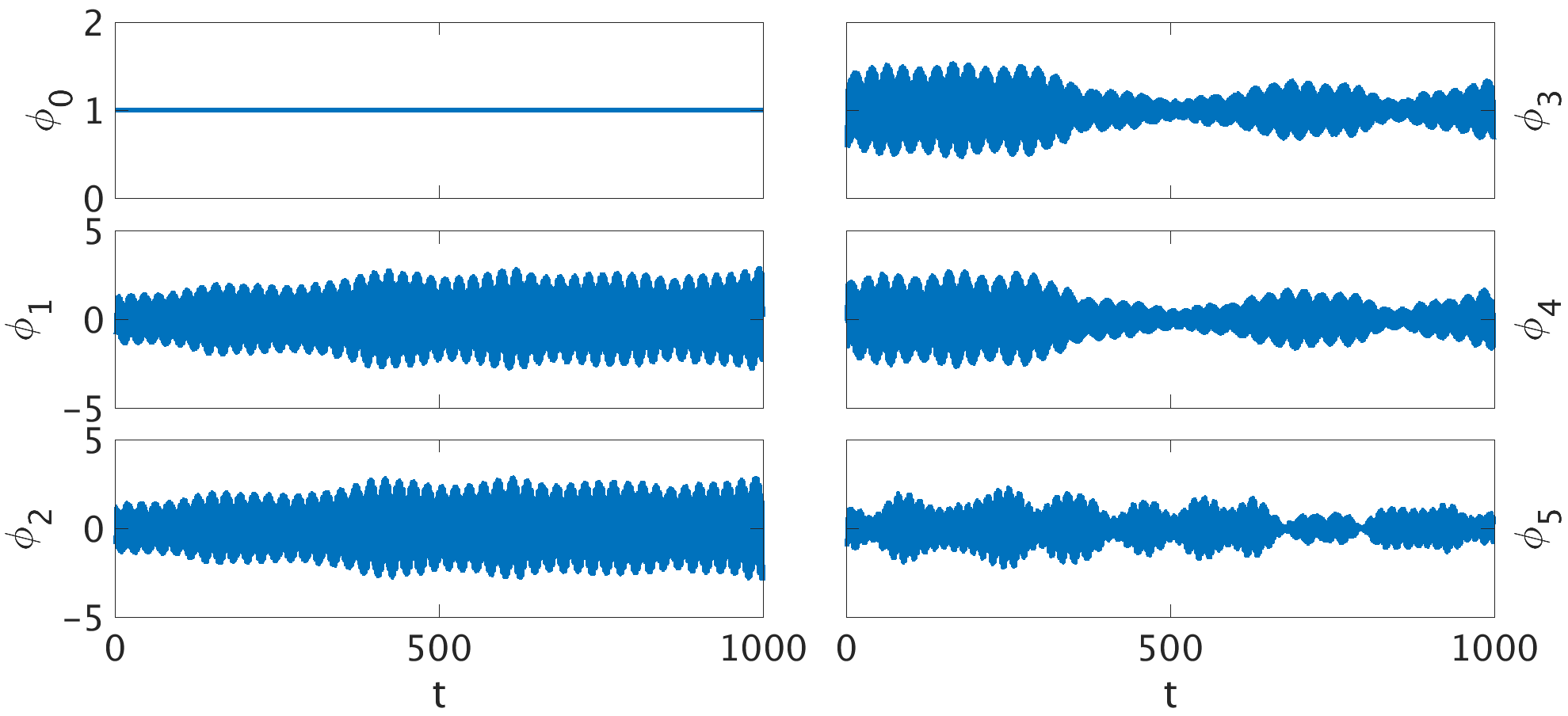}
\includegraphics[width=.48\textwidth]{\Path 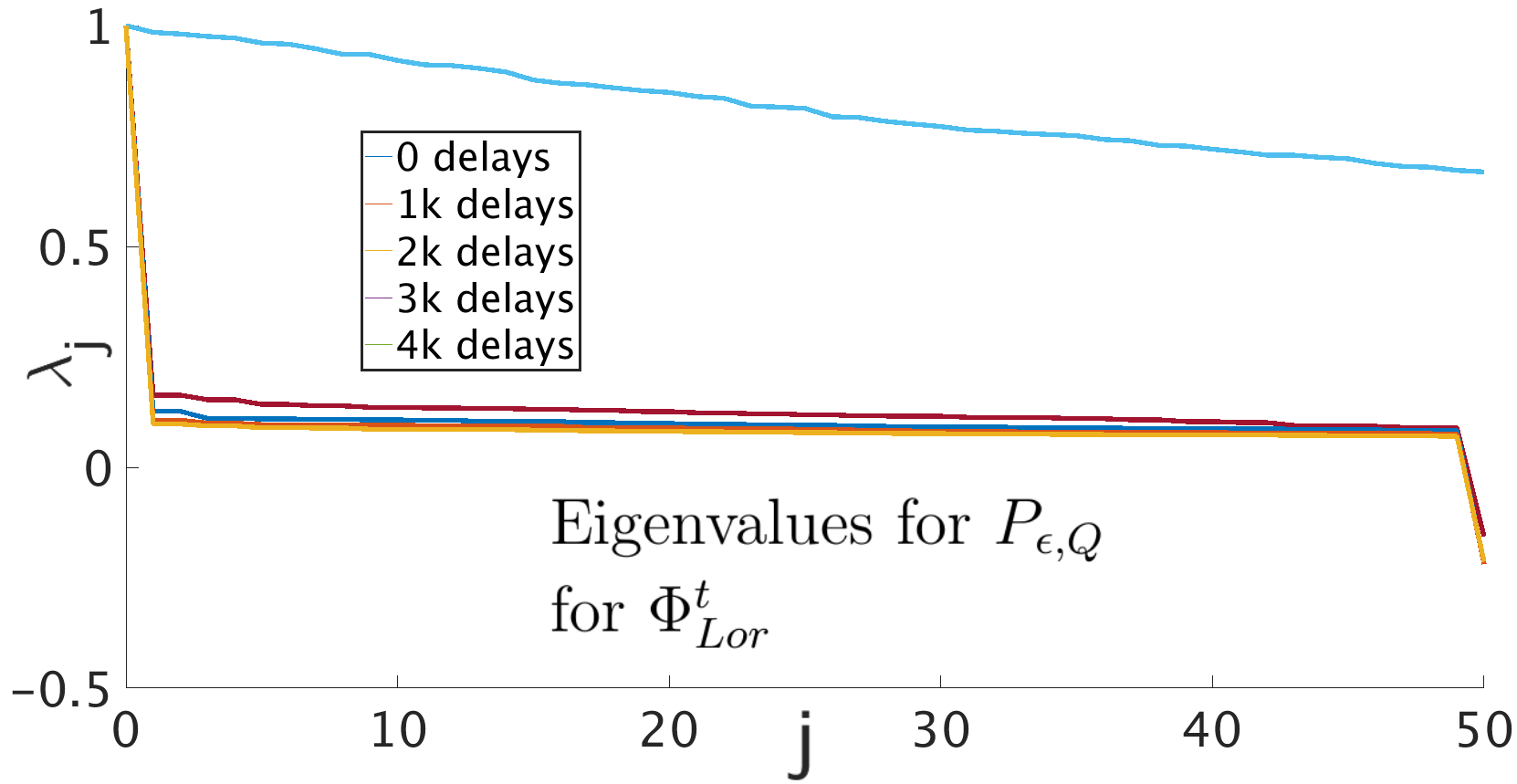}
\caption[l63]{Eigenvalues $ \hat \lambda_j $ and eigenfunctions $ \hat \phi_j $ of $P_{Q,N}$, and \blue{absolute values of a} matrix representation of the generator of the L63 system in~\eqref{eqn:l63} obtained with $Q=4000$ delays. The generator of this system has purely continuous spectrum and a trivial eigenvalue at 0. As a result, according to Theorem~\ref{thm:B}, as $Q\to\infty$ all $ \hat \lambda_j \neq 1 $ converge to $ 0$. This behavior can be seen in the bottom-right panel, where the $ \hat \lambda_j $ not equal to 1 are seen clustered around a small value $\approx 0.1$. Moreover, the time series of the $\hat\phi_j$, shown in the bottom-left panel, are manifestly non-periodic since they fail to converge to Koopman eigenfunctions. As illustrated by the phase space plot of $\hat \phi_2$ in the top-right panel, the leading eigenfunctions have a highly rough geometrical structure on the Lorentz attractor. The top left panel shows a matrix representation of the generator $V$ with respect to the $ \{ \hat \phi_j \} $ basis. Remarkably, this matrix is very nearly bi-diagonal, yet we do not have a theoretical result justifying this behavior.}
\label{fig:l63}
\end{figure}

\paragraph{Systems with purely continuous spectra.} An important assumption of our kernel-based method is that the dynamics has Koopman eigenvalues, i.e., $\Disc$ contains non-constant functions. This underlies the ability of our regularized operator $L_{\theta}$ in~\eqref{eqn:regularize} to be a suitable substitute of $V$ (Theorem~\ref{thm:C}). In fact, by Theorem~\ref{thm:B}, in the limit of infinitely many delays $Q\to\infty$, if $\Disc $ only contains constant functions, then the kernels $k_{Q}$, $p_{Q}$ converge to $ 0$ (in the $L^2$ sense). However, when using finitely many delays, $k_{Q}\neq 0$, and correspondingly $p$ obtained by normalization of $k_{\infty}$ is not close to $0$. It is not currently understood how the operator $P_{Q}$ should behave for purely continuous spectrum systems (i.e., $\Disc=\spn\{1_X\}$) and $Q<\infty$. 

\blue{One of the consequences of Theorem \ref{thm:B}(ii) is that in the limit of $Q\to\infty$, the continuous spectrum subspace $\mathcal{D}^\perp$ is annihilated by the integral operator $P_\infty$, thus rendering this operator ineffective for studying or reconstructing the mixing component of the dynamics. In particular, for weak-mixing systems, $P_\infty$ should have all but one of its eigenvalues to equal to zero.} Numerical results shown in Fig.~\ref{fig:l63} indicate that the finite-rank, data-driven operator $ P_{Q,N}$ for the L63 system still has nonzero eigenvalues strictly less than 1, but these eigenvalues are clustered around a small value ($\hat \lambda_j \approx 0.1$). This behavior is in agreement with Theorem~\ref{thm:B}, according to which all the eigenvalues of $P_{Q}$ other than $1$ should converge to zero as $ Q \to \infty $. Note that the matrix representation of $V$ (also shown in Fig.~\ref{fig:l63}) is still skew-symmetric to a good approximation, since $V$ is a skew-symmetric operator. Intriguingly, the matrix has a $2\times 2$ block-diagonal form, despite $V$ having no eigenfunctions. This form of the generator matrix has some aspects in common with the recent results of Brunton et al.\ \cite{BruntonEtAl17}, who obtained a bi-diagonal matrix representation of the L63 generator in a data-driven basis from Hankel matrix analysis. In Fig.~\ref{fig:l63}, the lack of Koopman eigenfunctions is evident from the time-series plots of the numerical eigenfunctions $\hat \phi_j$, which are clearly non-periodic. Moreover, a phase space plot of $\hat \phi_2$ illustrates that it is a highly rough function on the Lorenz attractor. 

\blue{In light of the above, the results established in this work have implications for delay-embedding techniques, as they point to a tradeoff between reconstruction of the system's state space topology in delay embedding space (favored by large numbers of delays) and the ability of operators for data analysis, such as $P_Q$, to adequately represent the mixing component of the dynamics. Nevertheless, the ability to consistently approximate the quasiperiodic dynamics through Koopman eigenfunctions is still useful, as it allows identification and efficient modeling (e.g., via~\eqref{eqClosedForm} of observables with high predictability). At the very least, the ``negative'' result described above provides a reference point that may aid the design of delay-embedding methodologies aiming to reconstruct the full structure of the dynamics.} One of the goals of our future work is to investigate the behavior of the techniques presented here away from the asymptotic limit $ Q \to \infty $ in the presence of a continuous spectrum. 

\paragraph{Acknowledgements} Dimitrios Giannakis received support from ONR YIP grant N00014-16-1-2649, NSF grant DMS-1521775, and DARPA grant HR0011-16-C-0116. Suddhasattwa Das is supported as a postdoctoral research fellow from the first grant. The authors are grateful to L S Young for her suggestions.

\section*{References}
\bibliographystyle{unsrt}
\bibliography{References}
\pagebreak 
\end{document}